\newtheorem{theorem}{Theorem}[section]                                                                                                                                                                                                                                                                                                                                                                                        
\newtheorem{lemma}[theorem]{Lemma}
\newtheorem{corollary}[theorem]{Corollary}
\newtheorem{proposition}[theorem]{Proposition}
\newtheorem{definition}[theorem]{Definition}
\newcommand{\reg}{\mbox{\upshape ref\,}}
\newcommand{\pdim}{\mbox{\upshape pdim\,}}
\newcommand{\depth}{\mbox{\upshape depth\,}}
\newcommand{\del}{\mbox{\upshape del}}
\newcommand{\lk}{\mbox{\upshape lk}}
\begin{document}

\title{Algebraic properties of edge ideals\\
 via combinatorial topology}

\author{Anton Dochtermann and Alexander Engstr\"{o}m\\
\hspace{.3 in}\\
\textit{Dedicated to Anders Bj\"{o}rner on the occasion of his 60th birthday.}}

\date\today

\maketitle

\abstract{We apply some basic notions from combinatorial topology to establish various algebraic properties of edge ideals of graphs and more general Stanley-Reisner rings.  In this way we provide new short proofs of some theorems from the literature regarding linearity, Betti numbers, and (sequentially) Cohen-Macaulay properties of edges ideals associated to chordal, complements of chordal, and Ferrers graphs, as well as trees and forests.  Our approach unifies (and in many cases strengthens) these results and also provides combinatorial/enumerative interpretations of certain algebraic properties.  We apply our setup to obtain new results regarding algebraic properties of edge ideals in the context of local changes to a graph (adding whiskers and ears) as well as bounded vertex degree.  These methods also lead to recursive relations among certain generating functions of Betti numbers which we use to establish new formulas for the projective dimension of edge ideals.  We use only well-known tools from combinatorial topology along the lines of independence complexes of graphs, (not necessarily pure) vertex decomposability, shellability, etc.}

\section{Introduction}
Suppose $G$ is a finite simple graph with vertex set $[n] = \{1, \dots, n\}$ and edge set $E(G)$, and let $S := k[x_1, \dots, x_n]$ denote the polynomial ring on $n$ variables over some field $k$.  We define the \emph{edge ideal} $I_G \subseteq S$ to be the ideal generated by all monomials $x_i x_j$ whenever $ij \in E(G)$.  The natural problem is to then obtain information regarding the algebraic invariants of the $S$-module $R_G := S/I_G$ in terms of the combinatorial data provided by the graph $G$.  The study of edge ideals of graphs has become popular recently, and many papers have been written addressing various algebraic properties of edge ideals associated to various classes of graphs.  These results occupy many journal pages and often involve complicated (mostly `algebraic') arguments which seem to disregard the underlying connections to other branches of mathematics.  The proofs are often specifically crafted to address a particular graph class or algebraic property and hence do not generalize well to study other situations.

The main goal of this paper is to illustrate how one can use standard techniques from combinatorial topology (in the spirit of \cite{B}) to study algebraic properties of edge ideals.  In this way we recover and extend well-known results (often with very short and simple proofs) and at the same time provide new answers to open questions posed in previous papers.  Our methods give a unified approach to the study of various properties of edge ideals employing only elementary topological and combinatorial methods.  It is our hope that these methods will find further applications to the study of edge ideals.

For us the topological machinery will enter the picture when we view edge ideals as a special case of the more general theory of Stanley-Reisner ideals (and rings).  In this context one begins with a simplicial complex $\Delta$ on the vertices $\{1, \dots, n\}$ and associates to it the Stanley-Reisner ideal $I_\Delta$ generated by monomials corresponding to nonfaces of $\Delta$; the Stanley-Reisner ring is then the quotient $R_\Delta := S/I_\Delta$.  Stanley-Reisner ideals are precisely the square-free monomial ideals of $S$.  Edge ideals are the special case that $I_\Delta$ is generated in degree 2, and we can recover $\Delta$ as ${\tt Ind}(G)$, the independence complex of the graph $G$ (or equivalently as ${\tt Cl}(\bar G)$, the clique complex of the complement of $G$).  In the case of Stanley-Reisner rings, there is a strong (and well-known) connection between the topology of $\Delta$ and certain algebraic invariants of the ring $R_\Delta$.   Perhaps the most well-known such result is Hochster's formula from \cite{H1} (Theorem \ref{hochster} below), which gives an explicit formula for the Betti numbers of the Stanley-Reisner ring in terms of the topology of induced subcomplexes of $\Delta$.  

Many of our methods and results will involve combining the `right' combinatorial topological notions with basic methods for understanding their topology.  For the most part the classes of complexes that we consider will be those defined in a recursive manner, as these are particularly well suited to applications of tools such as Hochster's formula.  These include (not necessarily pure) shellable, vertex-decomposable, and dismantlable complexes (see the next section for definitions).  In the context of topological combinatorics these are popular and well-studied classes of complexes, and here we see an interesting connection to the algebraic study of Stanley-Reisner ideals.

The rest of the paper is organized as follows.  In section 2 we review some basic notions from combinatorial topology and the theory of resolutions of ideals.  In section 3 we discuss the case of edge ideals of graphs $G$ where $G$ is the \emph{complement} of a chordal graph.  Here we are able to give a simple proof of Fr\"{o}berg's main theorem from \cite{F}.
\begin{theorem}(Theorem \ref{Froberg})
For any graph $G$ the edge ideal $I_G$ has a linear resolution if and only if $G$ is the complement of a chordal graph.
\end{theorem}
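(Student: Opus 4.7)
The plan is to translate the linearity of $I_G$ into a topological condition on induced subcomplexes of the independence complex and then match that condition with chordality of $\bar G$.

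Set $\Delta = {\tt Ind}(G) = {\tt Cl}(\bar G)$, so that for each $W \subseteq [n]$ the induced subcomplex $\Delta_W$ equals ${\tt Cl}(\bar G[W])$. Since $I_G$ is generated in degree $2$, having a linear resolution is equivalent to $\beta_{i,j}(R_G) = 0$ for all $i \geq 1$ and $j \neq i+1$; the case $j \leq i$ is automatic, so only $j \geq i+2$ matters. Applying Hochster's formula with $\ell = j-i-1$, this becomes the purely topological condition
\begin{equation*}
\tilde H_\ell(\Delta_W; k) = 0 \qquad \text{for every } W \subseteq [n] \text{ and every } \ell \text{ with } 1 \leq \ell \leq |W|-2.
\end{equation*}
So the theorem reduces to showing that this homological vanishing on every induced subcomplex of ${\tt Cl}(\bar G)$ is equivalent to chordality of $\bar G$.

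For the $(\Leftarrow)$ direction, assume $\bar G$ is chordal. Since chordality is inherited by induced subgraphs, it suffices to prove that for any chordal graph $H$, each connected component of ${\tt Cl}(H)$ is contractible (so that $\tilde H_\ell = 0$ for all $\ell \geq 1$). I would induct on $|V(H)|$ using the classical fact that every chordal graph has a simplicial vertex $v$ (one whose neighborhood forms a clique in $H$). Because $N(v)$ spans a clique, $v$ is dominated by any of its neighbors in ${\tt Cl}(H)$, so folding/dismantling $v$ onto a neighbor gives a homotopy equivalence between ${\tt Cl}(H)$ and ${\tt Cl}(H \setminus v)$, the clique complex of a smaller chordal graph. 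Alternatively one can write ${\tt Cl}(H) = \mathrm{star}(v) \cup {\tt Cl}(H \setminus v)$, observe that the star and the intersection (the simplex on $N(v)$) are both contractible, and conclude via Mayer--Vietoris and induction.

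For the $(\Rightarrow)$ direction, suppose $\bar G$ is not chordal, so $\bar G$ contains an induced chordless cycle $C_m$ on some vertex set $W \subseteq [n]$ with $m \geq 4$. Then $\Delta_W = {\tt Cl}(C_m)$ coincides with the $1$-dimensional $m$-cycle itself (since $C_m$ has no triangles), which is homotopy equivalent to $S^1$. Hence $\tilde H_1(\Delta_W; k) \cong k$, and since $1$ lies in the forbidden range $1 \leq 1 \leq m-2$, Hochster's formula produces a nonzero Betti number $\beta_{m-2,m}(R_G)$ off the linear strand.

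The main obstacle is the contractibility statement in the $(\Leftarrow)$ direction; the other direction is a one-line cycle computation, and the translation through Hochster's formula is mechanical. I expect the slickest treatment to leverage the recursive vertex-decomposability/dismantlability machinery to be developed in Section 2, so that the simplicial vertex of a chordal graph directly yields a dismantling of its clique complex and the homological vanishing comes for free.
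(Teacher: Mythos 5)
Your proposal is correct and follows essentially the same route as the paper: both directions go through Hochster's formula, the forward implication rests on showing that the clique complex of a chordal graph deformation retracts (by folding away a simplicial vertex) onto something with only $\tilde H_0$, and the converse uses an induced chordless cycle $C_m$, $m\geq 4$, to produce a class in $\tilde H_1$ and hence a Betti number $\beta_{m-2,m}$ off the linear strand. The only cosmetic difference is that the paper states the key lemma as ``${\tt Cl}(\bar G)$ is homotopy equivalent to $c$ disjoint points'' (with $c$ the number of components) rather than ``each component is contractible,'' which also yields its explicit formula for the linear Betti numbers.
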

\noindent In addition, our short proof gives a combinatorial interpretation of the Betti numbers of the complements of chordal graphs.  

In the case that $G$ is the complement of a chordal graph and is also \emph{bipartite} it can be shown that $G$ is a so-called \emph{Ferrers graph} (a bipartite graph associated to a given Ferrers diagram).  We are able to recover a formula for the Betti numbers of edge ideals Ferrers graphs, a result first established by Corso and Nagel in \cite{CN}.  Our proof is combinatorial in nature and provides the following enumerative interpretation for the Betti numbers of such graphs, answering a question posed in \cite{CN}.
\begin{theorem}(Theorem \ref{Ferrerbetti})
If $G_{\lambda}$ is a Ferrers graph associated to the partition $\lambda = (\lambda_1 \geq \cdots \geq \lambda_n)$, then the Betti numbers of $G_{\lambda}$ are zero unless $j = i+1$, in which case $\beta_{i,i+1}(G_{\lambda})$ is the number of rectangles of size $i+1$ in $\lambda$.  This number is given explictly by:
\[\beta_{i,i+1}(G_{\lambda}) = {\lambda_1 \choose i} + {\lambda_2+1 \choose i} + {\lambda_3+2 \choose i} + \cdots + {\lambda_n + n-1 \choose i} - {n \choose i+1}.\]
\end{theorem}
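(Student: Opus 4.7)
The plan is to combine Theorem~\ref{Froberg} with Hochster's formula and then carry out a straightforward count on the Ferrers diagram.

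First, since $G_\lambda$ is bipartite with chordal complement (a standard fact about Ferrers graphs, compatible with the discussion above), Theorem~\ref{Froberg} immediately gives that $I_{G_\lambda}$ has a linear resolution, so $\beta_{i,j}(G_\lambda) = 0$ unless $j = i+1$. This takes care of the vanishing claim and reduces the problem to computing $\beta_{i,i+1}$.

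For the diagonal Betti numbers I would apply Hochster's formula (Theorem~\ref{hochster}) to $\Delta = {\tt Ind}(G_\lambda)$. Since the connected components of ${\tt Ind}(G_\lambda|_W)$ are determined by its $1$-skeleton, which is exactly $\overline{G_\lambda}|_W$, this yields
\[
\beta_{i,i+1}(R_{G_\lambda}) = \sum_{|W|=i+1} \dim_k \tilde{H}_0({\tt Ind}(G_\lambda|_W);k) = \sum_{|W|=i+1} \bigl(c(\overline{G_\lambda}|_W) - 1\bigr),
\]
where $c$ denotes the number of connected components. Now write $W = A \sqcup B$ with $A \subseteq X$ and $B \subseteq Y$. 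The graph $\overline{G_\lambda}|_W$ always contains the complete subgraphs on $A$ and on $B$ (since $X$ and $Y$ are independent in $G_\lambda$), so it has at most two components; it has exactly two precisely when both $A$ and $B$ are nonempty and no edge of $\overline{G_\lambda}$ crosses between them, that is, every pair $(x_i, y_j) \in A \times B$ is an edge of $G_\lambda$. Setting $r = \max\{i : x_i \in A\}$, this condition reduces to $B \subseteq \{y_1,\ldots,y_{\lambda_r}\}$ (using $\lambda_i \geq \lambda_r$ for $i \leq r$). These are exactly the subsets $A \cup B$ cutting out an $|A| \times |B|$ rectangle inside $\lambda$, proving the combinatorial interpretation of $\beta_{i,i+1}$ as a count of rectangles of size $|A|+|B| = i+1$.

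To derive the explicit formula, I would condition on $r = \max A$. For fixed $r$ and fixed $|A| = a$, there are $\binom{r-1}{a-1}$ choices for $A$ (it must contain $x_r$) and $\binom{\lambda_r}{i+1-a}$ choices for $B$. Summing over $a = 1, \ldots, i$ (so that $|B| \geq 1$) and applying Vandermonde's identity after extending to $a = i+1$ and then subtracting that term,
\[
\sum_{a=1}^{i} \binom{r-1}{a-1}\binom{\lambda_r}{i+1-a} = \binom{\lambda_r + r - 1}{i} - \binom{r-1}{i}.
\]
Summing over $r = 1,\ldots,n$ and applying the hockey stick identity $\sum_{r=1}^n \binom{r-1}{i} = \binom{n}{i+1}$ yields the claimed closed form. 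No serious obstacle arises; the only step requiring real care is the identification of $A \cup B$ with a rectangle in $\lambda$, after which the remainder is a routine Vandermonde/hockey stick manipulation.
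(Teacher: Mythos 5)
Your proposal is correct, and both halves of it check out: the Vandermonde computation $\sum_{a=1}^{i}\binom{r-1}{a-1}\binom{\lambda_r}{i+1-a}=\binom{\lambda_r+r-1}{i}-\binom{r-1}{i}$ and the hockey-stick summation $\sum_{r=1}^{n}\binom{r-1}{i}=\binom{n}{i+1}$ do produce the stated closed form. Your route differs from the paper's in two places. For the vanishing and the rectangle interpretation, the paper does not invoke Fr\"oberg's theorem; instead it applies Hochster's formula together with Proposition \ref{Ferrerfold} (the folding argument showing ${\tt Ind}(G_\lambda)$ is two points for complete bipartite $G_\lambda$ and contractible otherwise), using that induced subgraphs of Ferrers graphs are again Ferrers. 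Your version gets the vanishing from Theorem \ref{Froberg} plus chordality of $\overline{G_\lambda}$ (a shortcut the paper itself acknowledges as valid in the remark following the theorem) and gets the $\tilde H_0$ count by directly counting components of $\overline{G_\lambda}[W]$, which amounts to specializing Theorem \ref{compchordalbetti}; this is equally rigorous and arguably more self-contained, since you only ever need $\tilde H_0$ rather than the full homotopy type. The more substantive divergence is in the enumeration: the paper proves the explicit formula by a double induction (on $n$, then on $\lambda_n$), peeling off one box of the diagram at a time in the style of Corso--Nagel, whereas you give a direct, non-inductive count by conditioning on the lowest row $r$ of the rectangle and applying two standard binomial identities. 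Your derivation is shorter and makes the bijection with rectangles do all the work; the paper's induction is heavier but parallels the algebraic argument it is meant to replace. One small point worth stating explicitly in a final write-up: the identification $c({\tt Ind}(G_\lambda[W]))=c(\overline{G_\lambda}[W])$ uses that a simplicial complex containing all its vertices has the same components as its $1$-skeleton, and $\dim_k\tilde H_0 = c-1$ requires $W\neq\emptyset$, which holds since $|W|=i+1\geq 1$ (and Hochster's formula is only applied for $i>0$).
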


In section 4 we discuss the case of edge ideals of graphs $G$ in the case that $G$ is a chordal graph.  Here we provide a short proof of the following theorem, a strengthening of the main result of Francisco and Van Tuyl from \cite{FT} and a related result of Van Tuyl and Villareal from \cite{TV}. 
\begin{theorem}(Theorem \ref{chordalVD})
If $G$ is a chordal graph then the complex ${\tt Ind}(G)$ is vertex-decomposable and hence the ideal $I_G$ is sequentially Cohen-Macaulay.
\end{theorem}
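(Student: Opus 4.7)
My plan is to induct on the number of vertices of $G$, producing an explicit shedding vertex using Dirac's classical characterization of chordal graphs by the existence of a simplicial vertex (one whose neighborhood is a clique). Since induced subgraphs of chordal graphs are chordal, both of the complexes that appear in a vertex-decomposition of ${\tt Ind}(G)$ will again be independence complexes of chordal graphs, and the inductive hypothesis will apply.

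In more detail, let $v$ be a simplicial vertex of $G$. If $v$ is isolated then ${\tt Ind}(G)$ is a cone with apex $v$, and cones over vertex-decomposable complexes are vertex-decomposable, so this case reduces immediately to $G \setminus v$. Otherwise, pick any neighbor $x$ of $v$. Because $N(v)$ is a clique, every vertex of $N(v)$ is adjacent to $x$, and hence $N[v] \subseteq N[x]$. I claim that $x$ is a shedding vertex of $\Delta = {\tt Ind}(G)$. Given a facet $F$ of $\del_x(\Delta) = {\tt Ind}(G\setminus x)$, I need to show $F$ is a facet of $\Delta$, equivalently that $F \cap N(x) \neq \emptyset$. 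Either $v \in F$, in which case $v \in F \cap N(x)$, or $v \notin F$; in the latter case the maximality of $F$ in $G\setminus x$ forces $F$ to meet $N(v)\setminus\{x\}$, and since $N(v)\setminus\{x\} \subseteq N(x)$ this again gives $F \cap N(x) \neq \emptyset$.

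Now $\del_x(\Delta) = {\tt Ind}(G \setminus x)$ and $\lk_x(\Delta) = {\tt Ind}(G \setminus N[x])$ are both independence complexes of induced subgraphs of $G$, hence of chordal graphs on strictly fewer vertices, and by the inductive hypothesis both are vertex-decomposable. The base case of a graph with no edges gives a full simplex, which is vertex-decomposable by convention. This completes the induction. For the last assertion, vertex-decomposability implies (non-pure) shellability by Bj\"orner--Wachs, and shellability of ${\tt Ind}(G)$ implies that the Stanley--Reisner ring $R_G = S/I_G$ is sequentially Cohen--Macaulay.

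The main obstacle is choosing the correct shedding vertex: the simplicial vertex $v$ itself is generally \emph{not} a shedding vertex (a maximal independent set of $G\setminus v$ need not meet $N(v)$), so one must instead shed a neighbor $x$ of $v$ and exploit the closed-neighborhood containment $N[v] \subseteq N[x]$. Once this choice is identified the rest of the argument is routine, and the recursion on chordal subgraphs runs without further complications.
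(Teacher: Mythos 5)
Your proof is correct and takes essentially the same approach as the paper's: both select a simplicial vertex, shed one of its neighbors, verify the shedding condition by observing that any maximal independent set avoiding the shed vertex must meet its neighborhood (via the clique structure of the simplicial vertex's neighborhood), and handle the deletion and link by induction on chordal induced subgraphs. The only cosmetic difference is that you treat the isolated simplicial vertex separately with a cone argument, a degenerate case the paper leaves implicit.
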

Vertex-decomposable complexes are shellable and since interval graphs are chordal, this theorem also extends the main result of Billera and Myers from \cite{BM}, where it is shown that the order complex of a finite interval order is shellable.  In this section we also answer in the affirmative a suggestion/conjecture made in \cite{FT} regarding the sequentially Cohen-Macaulay property of cycles with an appended triangle (an operation which we call `adding an \emph{ear}').

\begin{proposition}(Proposition \ref{earcycle})
For $r \geq 3$, let $\tilde C_r$ be the graph obtained by adding an ear to an $r$-cycle.  Then the ideal $I_{\tilde C_r}$ is Cohen-Macaulay.
\end{proposition}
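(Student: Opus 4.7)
The plan is to establish that the independence complex ${\tt Ind}(\tilde C_r)$ is pure and vertex decomposable, from which Cohen--Macaulayness of $I_{\tilde C_r}$ follows by the standard implications pure vertex decomposable $\Rightarrow$ pure shellable $\Rightarrow$ Cohen--Macaulay Stanley--Reisner ring.

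Denote the apex of the ear by $v$ and the two cycle vertices to which $v$ is attached by $1,2$, so that $\{1,2,v\}$ is a triangle in $\tilde C_r$. Every independent set of $\tilde C_r$ contains at most one vertex of this triangle, which partitions the facets of ${\tt Ind}(\tilde C_r)$ into three classes according to which of $\{1,2,v\}$ they meet. The first step is to identify a shedding vertex: I would take $x=1$, a cycle vertex adjacent to the ear. Both $\tilde C_r - 1$ and $\tilde C_r - N[1]$ then become trees: the former is the path $v - 2 - 3 - \cdots - r$ and the latter is the subpath on $\{3,\ldots,r-1\}$. Since trees are chordal, Theorem~\ref{chordalVD} immediately yields vertex decomposability of both $\del(1) = {\tt Ind}(\tilde C_r - 1)$ and $\lk(1) = {\tt Ind}(\tilde C_r - N[1])$.

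Next, I would verify the shedding condition by a direct size comparison: no maximum independent set of the tree $\tilde C_r - 1$ can be contained in $\{3,\ldots,r-1\}$, so every facet of $\del(1)$ uses at least one vertex of $N(1) = \{2, r, v\}$, which is exactly what the shedding condition requires. The remaining task is verifying purity. I would do so by partitioning facets of ${\tt Ind}(\tilde C_r)$ according to which vertex of $\{1,2,v\}$ they contain, identifying each class with the maximal independent sets of an explicit path on $r-2$ or $r-3$ vertices, and checking that all three classes produce facets of the same cardinality.

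The main obstacle I expect is precisely this purity check. The independence complex of the plain cycle $C_r$ is already not pure for $r \geq 6$ (for instance $\{1,4\}$ is a maximal independent set of $C_6$ of size strictly below the independence number), so one must use the ear to absorb every such short maximal set. The key observation to establish is that any maximal independent set of $C_r$ avoiding both $1$ and $2$ gains $v$ as a legal extension in $\tilde C_r$, hence cannot itself be a facet there; short maximal sets of $C_r$ that contain $1$ or $2$ are then ruled out by the symmetric argument applied to the other two triangle vertices. Once purity is secured, the shedding above completes the pure vertex decomposition and yields Cohen--Macaulayness.
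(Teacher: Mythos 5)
The first half of your plan is correct and coincides with the paper's actual argument: the paper applies Lemma \ref{SCMgraph} with $u$ the ear apex and $v$ one of its two cycle neighbours, and the whole content of the proof is that the deletion $\tilde C_r \setminus \{v\}$ and the link graph $\tilde C_r \setminus (\{v\}\cup N(v))$ are paths, hence chordal, hence have vertex-decomposable independence complexes by Theorem \ref{chordalVD}; your choice of the attachment vertex $1$ as shedding vertex and your observation that every facet of $\del(1)$ must meet $N(1)$ (because otherwise $v$ could be added) is the same argument. (Minor point: the shedding condition concerns every \emph{maximal} independent set of $\tilde C_r \setminus \{1\}$, not only the maximum ones.)

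The genuine gap is the purity claim, and it is not repairable: ${\tt Ind}(\tilde C_r)$ is simply not pure for $r\neq 4$, so the route ``pure vertex-decomposable $\Rightarrow$ Cohen--Macaulay'' cannot work. For $r=3$ the facets are $\{1\}$, $\{2\}$ and $\{3,v\}$; for $r=5$ both $\{1,4\}$ and $\{3,5,v\}$ are facets, of sizes $2$ and $3$. The ``symmetric argument'' you invoke to kill short maximal sets containing $1$ or $2$ does not exist, because the triangle $\{1,2,v\}$ admits no automorphism exchanging $v$ with $1$ or $2$: the apex $v$ has degree $2$ while $1$ and $2$ have degree $3$, so a maximal independent set containing $1$ is just $\{1\}$ together with an arbitrary maximal independent set of the path on $\{3,\dots,r-1\}$, and these already come in several sizes once $r\geq 6$ (e.g.\ $\{1,4\}$ versus $\{1,3,5\}$ in $\tilde C_6$). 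Since a Cohen--Macaulay complex must be pure, $R_{\tilde C_r}$ is in fact not Cohen--Macaulay in general. The version of the statement quoted here (from the introduction) overstates the result; the proposition actually proved in the body of the paper asserts only that ${\tt Ind}(\tilde C_r)$ is vertex-decomposable and hence that $\tilde C_r$ is \emph{sequentially} Cohen--Macaulay --- which is precisely what the correct half of your argument delivers, so you should stop there and weaken your conclusion accordingly.
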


This idea of making small changes to a graph to obtain (sequentially) Cohen-Macaulay graph ideals seems to be of some interest to algebraists, and is also explored in \cite{V} and \cite{FH}.  In these papers, the authors introduce the notion of adding a \emph{whisker} of a graph $G$ at a vertex $v \in G$, which is by definition the addition of a new vertex $v^\prime$ and a new edge $(v,v^\prime)$.  Although our methods do not seem to recover results from \cite{FH} regarding sequentially Cohen-Macaulay graphs, we are able to give a short proof of the following result, a strengthening of a theorem of Villareal from \cite{V}.

\begin{theorem}(Theorem \ref{whiskerVD})
Let $G$ be a graph and let $G^\prime$ be the graph obtained by adding whiskers to every vertex $v \in G$.  Then the complex ${\tt Ind}(G^\prime)$ is pure and vertex-decomposable and hence the ideal $I_{G^\prime}$ is Cohen-Macaulay.
\end{theorem}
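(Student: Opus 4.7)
The proof will go by induction on $n = |V(G)|$, with trivial base cases for $n \leq 1$. In the inductive step I would first note that $\Delta := {\tt Ind}(G')$ is pure of dimension $n-1$: the whisker edge $\{u,u'\}$ forces at most one of $u,u'$ into any independent set of $G'$, and if a facet contains neither then $u'$ can be added (its only neighbor in $G'$ is $u$), so every facet contains exactly one of $\{u,u'\}$ for each $u \in V(G)$.

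For vertex-decomposability, pick any $v \in V(G)$; I claim $v$ is a shedding vertex with pure vertex-decomposable link and deletion. The shedding condition is essentially forced by the whisker at $v$: if $F$ were a facet of $\del_\Delta(v)$ but not a facet of $\Delta$, maximality of $F$ in $\del_\Delta(v)$ would force $F \cup \{v\} \in \Delta$, which requires $v' \notin F$; but then $F \cup \{v'\}$ is an independent set of $G'$ missing $v$, contradicting the maximality of $F$ in $\del_\Delta(v)$.

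The deletion $\del_\Delta(v)$ is a cone with apex $v'$, because any $F \in \del_\Delta(v)$ with $v' \notin F$ extends to $F \cup \{v'\}$ (independent in $G'$ since the only neighbor of $v'$ is $v$, and $v \notin F$). The base of this cone is ${\tt Ind}(G' - \{v,v'\}) = {\tt Ind}((G-v)')$, which is pure and vertex-decomposable by the inductive hypothesis, so $\del_\Delta(v)$ inherits both properties. For the link, $\lk_\Delta(v) = {\tt Ind}(G' - N_{G'}[v])$ with $N_{G'}[v] = \{v,v'\} \cup N_G(v)$. Removing these vertices leaves the whisker partners $\{u' : u \in N_G(v)\}$ behind as isolated vertices, while the non-isolated part is exactly $(G - N_G[v])'$. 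Hence $\lk_\Delta(v)$ is the join of ${\tt Ind}((G - N_G[v])')$ with a simplex on $\{u' : u \in N_G(v)\}$, and since joining a pure vertex-decomposable complex with a simplex preserves both properties, induction handles the link.

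The main technical subtlety is identifying $\lk_\Delta(v)$ correctly: one must notice that the whisker tips attached to the deleted $G$-neighbors of $v$ survive as isolated vertices in $G' - N_{G'}[v]$ and therefore contribute cone apices (equivalently, a simplex join factor) to the independence complex. Once both the link and the deletion are recognized as (joins with simplices of) independence complexes of smaller whiskered graphs, the induction closes, purity is maintained throughout, and the Cohen-Macaulay conclusion follows from the standard chain pure vertex-decomposable $\Rightarrow$ pure shellable $\Rightarrow$ Cohen-Macaulay.
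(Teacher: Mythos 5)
Your proof is correct and follows essentially the same route as the paper's: induction on $|V(G)|$, purity via choosing exactly one vertex from each whisker pair, the deletion $\del_\Delta(v)$ recognized as a cone with apex $v'$ over ${\tt Ind}\big((G\setminus\{v\})'\big)$, and the link as an iterated cone (join with the simplex on the surviving whisker tips) over ${\tt Ind}\big((G\setminus N_G[v])'\big)$. Your write-up is in fact a bit more careful than the paper's, since you explicitly verify the shedding condition and correctly note that the base of the link is the \emph{whiskered} graph $(G\setminus N_G[v])'$ together with the isolated whisker tips.
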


In section 5 we use basic notions from combinatorial topology to obtain bounds on the projective dimension of edge ideals for certain classes of graphs; one can view this as  a strengthening of the Hilbert syzygy theorem for resolutions of such ideals.  For several classes of graphs the connectivity of the associated independence complexes can be bounded from below by $an+b$ where $n$ is the number of vertices and $a$ and $b$ are fixed constants for that class. We show that the projective dimension of the edge ideal of a graph with $n$ vertices from such a class is at most $n(1-a)-b-1$. One result along these lines is the following.

\begin{proposition}(Corollary \ref{enkeltLemma})
If $G$ is a graph on $n$ vertices with maximal degree $d\geq 1$ then
the projective dimension of $R_G$ is at most $n\left(1-\frac{1}{2d}\right)+\frac{1}{2d}$.
\end{proposition}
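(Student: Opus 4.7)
The plan is to combine Hochster's formula (Theorem~\ref{hochster}) with a connectivity lower bound on independence complexes of graphs of bounded maximum degree. This instantiates the general template pointed out in the introduction: if for some constants $a,b$ every induced subgraph $H$ of $G$ on $m$ vertices satisfies that ${\tt Ind}(H)$ is at least $(am+b)$-connected, then Hochster forces $\pdim(R_G) \leq n(1-a)-b-1$. For the stated corollary the required connectivity slope is $a = 1/(2d)$.

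The key ingredient is the supporting connectivity lemma, asserting that for a graph $H$ on $m$ vertices with maximum degree $\leq d$ the complex ${\tt Ind}(H)$ is at least $\bigl(\lceil m/(2d)\rceil - 2\bigr)$-connected (or some form yielding the same slope). The natural route is via the decomposition
\[
{\tt Ind}(H) \;=\; {\tt Ind}(H-v) \;\cup\; v * {\tt Ind}(H-N[v])
\]
for $v \in V(H)$: the second piece is a cone with apex $v$ over ${\tt Ind}(H-N[v])$, hence contractible, while the intersection is ${\tt Ind}(H-N[v])$. The resulting Mayer--Vietoris (equivalently cofiber) long exact sequence yields $\mathrm{conn}({\tt Ind}(H)) \geq \min\bigl(\mathrm{conn}({\tt Ind}(H-v)),\,\mathrm{conn}({\tt Ind}(H-N[v])) + 1\bigr)$. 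Since $|V(H-v)| = m-1$ and $|V(H-N[v])| \geq m-1-d$, both still of maximum degree $\leq d$, an induction on $m$ closes at slope $1/(2d)$, provided the vertex $v$ is chosen with sufficient care so that the induction hypothesis is strong enough in the worst case.

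Given the connectivity lemma, the corollary is a routine calculation. A non-zero $\beta_{i,j}(R_G)$ requires $\tilde H_{j-i-1}({\tt Ind}(G[W]); k) \neq 0$ for some $W \subseteq [n]$ with $|W|=j$, and since $G[W]$ inherits the maximum-degree bound, the connectivity lemma forces $j - i - 1 \geq \lceil j/(2d) \rceil - 1$, i.e.\ $i \leq j - \lceil j/(2d) \rceil \leq j\bigl(1 - \tfrac{1}{2d}\bigr)$. Maximizing over $j \leq n$ (the right-hand side is increasing in $j$ because $1 - \tfrac{1}{2d} > 0$) yields $\pdim(R_G) \leq n\bigl(1 - \tfrac{1}{2d}\bigr) + \tfrac{1}{2d}$, as claimed.

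The main obstacle is the connectivity lemma itself at the sharp slope $1/(2d)$: a single-vertex deletion induction has trouble closing in the worst case of $d$-regular graphs, since removing one vertex need not decrease the target connectivity by the full amount. Some refinement---a careful choice of $v$, the fold/dismantling lemma, or a stronger inductive hypothesis---is needed to secure the $1/(2d)$ rate. Once the lemma is in hand, the corollary follows by a short index-tracking exercise that contains no further ideas beyond Hochster's formula and the definition of connectivity.
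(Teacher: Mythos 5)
Your overall scheme is exactly the paper's: the ``general template'' you describe is precisely Theorem~\ref{TheoremPDIM} (proved by the same one-line index-tracking through Hochster's formula that you carry out at the end), and the corollary is its instantiation with $a=\frac{1}{2d}$ and $b=-1-\frac{1}{2d}$. That part of your argument is fine. The problem is the connectivity input. The paper does not prove it; it imports from Aharoni--Haxell \cite{AH} and Meshulam \cite{M1} the statement that for a graph $H$ on $m$ vertices with maximum degree $d$ one has $\tilde{H}_t({\tt Ind}(H))=0$ for all $t\leq \frac{m-1}{2d}-1$. You instead sketch a proof of this lemma by the vertex decomposition ${\tt Ind}(H)={\tt Ind}(H-v)\cup\bigl(v\ast{\tt Ind}(H-N[v])\bigr)$ and the resulting estimate $\mathrm{conn}({\tt Ind}(H))\geq\min\bigl(\mathrm{conn}({\tt Ind}(H-v)),\,\mathrm{conn}({\tt Ind}(H-N[v]))+1\bigr)$, and this induction genuinely does not close: the link branch gains $1$ while losing at most $d+1$ vertices, which is consistent with slope $\frac{1}{2d}$, but the deletion branch loses one vertex and gains nothing, so the target drops by $\frac{1}{2d}$ with no compensation. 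In a $d$-regular graph no clever choice of $v$ repairs this, so the gap is not a matter of ``care'' in choosing $v$; one needs a structurally different recursion. The known arguments use an \emph{edge}-deletion long exact sequence relating ${\tt Ind}(G)$, ${\tt Ind}(G-e)$ and ${\tt Ind}(G\setminus N[e])$ (Meshulam), or the hypergraph-matching/ruined-triangulation methods of \cite{AH} and \cite{ST} that the paper alludes to in Section~5. You flag this obstacle yourself, which is to your credit, but as written the proof is incomplete at its essential point: the lemma carrying all the content of the corollary is asserted, not established.

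If you are permitted to quote the Aharoni--Haxell/Meshulam bound as known, your write-up becomes a correct proof identical in substance to the paper's (and note that the paper also offers a second, citation-free route via Theorem~\ref{vertDa}, Ziegler's Lemma~\ref{zigge} and Hibi's skeleton criterion, showing ${\tt Ind}(G)^{\leq k}$ is pure vertex decomposable for $k<n/(2d)$ --- that argument sidesteps the connectivity lemma entirely and would be the natural repair if you want a self-contained proof).
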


In section 6 we introduce a generating function $\mathcal{B}(G;x,y)=\sum_{i,j} \beta_{i,j}(G)x^{j-i}y^i$ for the Betti numbers and use simple tools from combinatorial topology to derive certain relations for edge ideals of graphs.   We use these relations to show that the Betti numbers for a large
class of graphs is independent of the ground field, and to also provide new recursive formulas for projective dimension and regularity of $I_G$ in the case that $G$ is a forest.

\section{Background}
In this section we review some basic facts and constructions from the combinatorial topology of simplicial complexes and also review some related tools from the study of Stanley-Reisner rings.  

\subsection{Combinatorial topology}
The topological spaces most relevant to our study are (geometric realizations of) simplicial complexes.  A simplicial complex $\Delta$ is by definition a collection of subsets of some ground set $\Delta^0$ (called the \emph{vertices} of $\Delta$ and usually taken to be the set $[n] = \{1, \dots, n\}$) which are closed under taking subsets.  For us a \emph{facet} of a simplicial complex is an inclusion maximal face, and the simplicial complex $\Delta$ is called \emph{pure} if all the facets are of the same dimension.  If $\sigma \in \Delta$ is a face of a simplicial complex $\Delta$, the \emph{deletion} and \emph{link} of $\sigma$ are defined according to
\[\del_{\Delta}(\sigma) := \{\tau \in \Delta: \tau \cap \sigma = \emptyset \},\]
\[\lk_{\Delta}(\sigma) := \{\tau \in \Delta: \tau \cap \sigma = \emptyset, \tau \cup \sigma \in \Delta \}. \]

We next identify certain classes of simplicial complexes which arise in the context of edge ideals of graphs.  We take the first definition from \cite{J}.  
\begin{definition}\label{defVD}
A (not necessarily pure) simplicial complex $\Delta$ is \emph{vertex-decomposable} if either 
\begin{enumerate}
\item
$\Delta$ is a simplex, or
\item
$\Delta$ contains a vertex $v$ such that $\del_{\Delta}(v)$ and $\lk_{\Delta}(v)$ are vertex-decomposable, and such that every facet of $\del_{\Delta}(v)$ is a facet of $\Delta$.
\end{enumerate}
\end{definition}

\noindent
A related notion is that of \emph{non-pure} shellability, first introduced by Bj\"{o}rner and Wachs in \cite{BW}.

\begin{definition}
A (not necessarily pure) simplicial complex $\Delta$ is \emph{shellable} if its facets can be arranged in a linear order $F_1, F_2, \dots, F_t$ such that the subcomplex $\big( \displaystyle\bigcup_{i=1}^{k-1} \bar F_i \big) \cap \bar F_k$ is pure and $(\dim F_k - 1)$-dimensional, for all $2 \leq k \leq t$.
\end{definition}

\noindent
Note that when the complex $\Delta$ is pure, this definition recovers the more classical notion from \cite{Z}.  

One can also give a combinatorial characterization of a sequentially Cohen-Macaulay simplicial complex, as discussed in \cite{BWW}.  For a simplicial complex $\Delta$ and for $0 \leq m \leq \dim \Delta$, we let $\Delta^{<m>}$ denote the subcomplex of $\Delta$ generated by its facets of dimension at least $m$.

\begin{definition}
A simplicial complex $\Delta$ is \emph{sequentially acyclic} (over $k$) if $\tilde H_r(\Delta^{<m>};k) = 0$ for all $r < m \leq \dim \Delta$.

A simplicial complex $\Delta$ is \emph{sequentially Cohen-Macaulay (CM)} over $k$ if $\lk_{\Delta}(F)$ is sequentially acyclic over $k$ for all $F \in \Delta$.
\end{definition}

It has been shown (see for example \cite{BWW}) that a complex $\Delta$ is sequentially CM if and only if the associated Stanley-Reisner ring is sequentially CM in the algebraic sense; we refer to Section 4 for a definition of the latter.

One can check (see \cite{J} or \cite{B}) that for any field $k$ the following (strict) implications hold:

\begin{center}
Vertex-decomposable $\Rightarrow$ shellable $\Rightarrow$ sequentially CM over ${\mathbb Z}$ $\Rightarrow$ sequentially CM over $k$.
\end{center}

There are several simplicial complexes that one can assign to a given graph $G$.  The \emph{independence complex} ${\tt Ind}(G)$ is the simplicial complex on the vertices of $G$, with faces given by collections of vertices which do no contain an edge from $G$.  The \emph{clique complex} ${\tt Cl}(G)$ is the simplicial complex on the \emph{looped} vertices of $G$ whose faces are given by collections of vertices which form a clique (complete subgraph) in $G$.  These notions are of course related in the sense that ${\tt Ind}(G) = {\tt Cl}(\bar G)$, where $\bar G$ is the \emph{complement} of $G$.  In understanding the topology of independence complexes, we will make use of the following fact from \cite{E1}.

\begin{lemma} \label{indcomplex}
For any graph $G$ we have isomorphisms of simplicial complexes:
\[\del_{{\tt Ind}(G)}(v) = {\tt Ind}\big(G \backslash \{v\}\big)\]
\[\lk_{{\tt Ind}(G)}(v) = {\tt Ind}\big(G \backslash (\{v\} \cup N(v))\big).\]
\end{lemma}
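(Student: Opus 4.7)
The plan is to prove both isomorphisms by unpacking the definitions of deletion, link, and independence complex, verifying set-equality of the face collections. Since a simplicial complex is determined by its set of faces (as subsets of the vertex set), equality of face collections gives the desired isomorphism of complexes.

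For the first identity, I would argue that a subset $\tau$ of the vertex set lies in $\del_{{\tt Ind}(G)}(v)$ if and only if $\tau$ is an independent set of $G$ and $v \notin \tau$. Equivalently, $\tau$ is a subset of $V(G) \setminus \{v\}$ containing no edge of $G$. Since an edge of $G$ contained in $V(G) \setminus \{v\}$ is precisely an edge of $G \setminus \{v\}$, this is the same as saying $\tau$ is an independent set of $G \setminus \{v\}$, i.e.\ a face of ${\tt Ind}(G \setminus \{v\})$.

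For the second identity, a subset $\tau$ lies in $\lk_{{\tt Ind}(G)}(v)$ if and only if $v \notin \tau$, $\tau \in {\tt Ind}(G)$, and $\tau \cup \{v\} \in {\tt Ind}(G)$. The last condition, combined with $\tau \in {\tt Ind}(G)$, is equivalent to asking that no vertex of $\tau$ is adjacent to $v$ in $G$, i.e.\ $\tau \cap N(v) = \emptyset$. Thus $\tau \subseteq V(G) \setminus (\{v\} \cup N(v))$ and $\tau$ is independent in $G$, which is exactly the condition for $\tau$ to be a face of ${\tt Ind}(G \setminus (\{v\} \cup N(v)))$.

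There is no real obstacle here; the statement is essentially a tautology once one writes out what deletion, link, and independence complex mean. The only minor point worth flagging is the observation that for $\tau$ already independent in $G$, independence of $\tau \cup \{v\}$ is governed solely by whether $\tau$ meets $N(v)$, since any new edge introduced by adjoining $v$ must be of the form $\{v,w\}$ for some $w \in \tau$.
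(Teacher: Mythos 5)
Your proof is correct and is the standard definition-unpacking argument; the paper itself states this lemma without proof, citing Engstr\"om's paper \cite{E1}, so there is no in-paper argument to diverge from. Both identities follow exactly as you say, the only substantive observation being the one you flag: for $\tau$ already independent, adjoining $v$ can only create edges of the form $\{v,w\}$ with $w \in \tau$, so independence of $\tau \cup \{v\}$ is equivalent to $\tau \cap N(v) = \emptyset$.
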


We will need the notion of a folding of a reflexive (loops on all vertices) graph $G$.  If a graph $G$ has vertices $v, w$ such that $N(v) \subseteq N(w)$ then we call the graph homomorphism $G \rightarrow G \backslash \{v\}$ which sends $v \mapsto w$ a \emph{folding}.  A reflexive graph $G$ is called \emph{dismantlable} if there exists a sequences of foldings that results in a single looped vertex (see \cite{Doc} for more information regarding foldings of graphs).  A flag simplicial complex $\Delta = {\tt Cl}(G)$ obtained as the clique complex of some reflexive graph $G$ is called \emph{dismantlable} if the underlying graph $G$ is dismantlable.  One can check that a folding of a graph $G \rightarrow G \backslash \{v\}$ induces an elementary collapse of the clique complexes ${\tt Cl}(G) \searrow {\tt Cl}(G \backslash \{v\})$ which preserves (simple) homotopy type.  Hence if $\Delta$ is a flag simplicial complex we have for any field $k$ the following string of implications. 

\begin{center}
Dismantlable $\Rightarrow$ collapsible $\Rightarrow$ contractible $\Rightarrow$ ${\mathbb Z}$-acyclic $\Rightarrow$ $k$-acyclic.
\end{center}

\noindent
We refer to \cite{B} for details regarding all undefined terms as well as a discussion regarding the chain of implications. 

\subsubsection{Stanley-Reisner rings and edge ideals of graphs}

We next review some notions from commutative algebra and specifically the theory of Stanley-Reisner rings.  For more details and undefined terms we refer to \cite{MS}.  Throughout the paper we will let $\Delta$ denote a simplicial complex on the vertices $[n]$, and will let $S := k[x_1, \dots, x_n]$ denote the polynomial ring on $n$ variables.  The \emph{Stanley-Reisner ideal} of $\Delta$, which we denote $I_{\Delta}$, is by definition the ideal in $S$ generated by all monomials $x_{\sigma}$ corresponding to nonfaces $\sigma \notin \Delta$.  The \emph{Stanley-Reisner ring} of $\Delta$ is by definition $S/I_{\Delta}$, and we will use $R_{\Delta}$ to denote this ring.  One can see that $\dim R_{\Delta}$, the (Krull) dimension of $R_{\Delta}$ is equal to $\dim(\Delta) + 1$.  The ring $R_{\Delta}$ is called \emph{Cohen-Macaulay (CM)} if $\depth R_{\Delta} = \dim R_{\Delta}$.

Suppose we have a minimal free resolution of $R_{\Delta}$ of the form
\[0 \rightarrow \displaystyle{\bigoplus_{j} S[-j]^{\beta_{\ell, j}}} \rightarrow \cdots \rightarrow \displaystyle{\bigoplus_{j} S[-j]^{\beta_{i, j}}} \rightarrow \cdots \rightarrow \displaystyle{\bigoplus_{j} S[-j]^{\beta_{1, j}}} \rightarrow S \rightarrow S/I_\Delta \rightarrow 0 \]
then the numbers $\beta_{i,j}$ are independent of the resolution and are called the (coarsely graded) \emph{Betti numbers} of $R_{\Delta}$ (or of $\Delta$), which we denote $\beta_{i,j}$.  The number $\ell$ (the length of the resolution) is called the \emph{projective dimension} of $\Delta$, which we will denote $\pdim(\Delta)$.  By the Auslander Buchsbaum formula, we have $\dim S - \depth R_{\Delta} = \pdim R_{\Delta}$.  

Note that a resolution of $R_\Delta$ as above can be thought of as a resolution of the ideal $I_\Delta$ (and vice versa) according to
\[0 \rightarrow \displaystyle{\bigoplus_{j} S[-j]^{\beta_{\ell, j}}} \rightarrow \cdots \rightarrow \displaystyle{\bigoplus_{j} S[-j]^{\beta_{i, j}}} \rightarrow \cdots \rightarrow \displaystyle{\bigoplus_{j} S[-j]^{\beta_{1, j}}} \rightarrow \displaystyle{\bigoplus_{j} S[-j]^{\beta_{0, j}}} \rightarrow I \rightarrow 0 \]
where the basis elements of $\displaystyle{\bigoplus_{j} S[-j]^{\beta_{0, j}}}$ correspond to a minimal set of generators of the ideal $I_\Delta$.  Hence we will sometimes not distinguish between resolutions of the Stanley-Reisner ring and the ideal.  We say that $I_{\Delta}$ (or just $\Delta$) has a \emph{$d$-linear} resolution if $\beta_{i,j} = 0$ whenever $j-i \neq d-1$ for all $i \geq 0$.

It turns out that there is a strong connection between the topology of the simplicial complex $\Delta$ and the structure of the resolution of $R_\Delta$.  One of the most useful results for us will be the so-called Hochster's formula (Theorem 5.1, \cite{H1}).

\begin{theorem}[Hochster's formula] \label{hochster}
For $i > 0$ the Betti numbers $\beta_{i,j}$ of a simplicial complex $\Delta$ are given by
\[ \beta_{i,j}(\Delta)=\sum_{W\in {\Delta^0 \choose j}} \dim_k \tilde{H}_{j-i-1} (\Delta[W];k).\]
\end{theorem}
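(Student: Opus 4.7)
The plan is to compute the multigraded Betti numbers of $R_\Delta$ via the Koszul complex and then identify the relevant graded pieces with simplicial (co)chain complexes of induced subcomplexes.

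By definition $\beta_{i,j}(\Delta) = \dim_k \mathrm{Tor}_i^S(R_\Delta, k)_j$. Since $I_\Delta$ is a monomial ideal, $R_\Delta$ inherits the fine $\mathbb{Z}^n$-grading of $S$, and hence so do the Tor modules. Writing $\beta_{i,\mathbf{a}}$ for the multigraded Betti numbers, $\beta_{i,j} = \sum_{|\mathbf{a}|=j}\beta_{i,\mathbf{a}}$, so it suffices to compute $\beta_{i,\mathbf{a}}$ and then sum. To compute Tor I would use the Koszul complex $K_\bullet = K_\bullet(x_1,\ldots,x_n;S)$, the minimal $\mathbb{Z}^n$-graded free resolution of $k$ over $S$, with $K_p = \bigoplus_{V\subseteq[n],\,|V|=p} S(-\mathbf{1}_V)\,e_V$ and the usual alternating differential $e_V \mapsto \sum_{i\in V}\mathrm{sgn}(i,V)\,x_i\,e_{V\setminus i}$. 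Then $\mathrm{Tor}_i^S(R_\Delta,k)$ is the $i$th homology of $R_\Delta\otimes_S K_\bullet$ as a multigraded module.

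The key step is the following identification. Fix a squarefree multidegree $\mathbf{a}=\mathbf{1}_W$. A basis element $x^\alpha e_V$ of $(K_p)_\mathbf{a}$ satisfies $\alpha+\mathbf{1}_V=\mathbf{1}_W$, forcing $V\subseteq W$ and $\alpha=\mathbf{1}_{W\setminus V}$; this element is nonzero in $R_\Delta\otimes K_\bullet$ exactly when $\sigma:=W\setminus V$ lies in $\Delta$, i.e.\ $\sigma$ is a face of the induced subcomplex $\Delta[W]$. After the substitution $\sigma = W\setminus V$ (so $|\sigma|=|W|-p$), a sign chase shows that the Koszul differential becomes, up to a global sign depending on a fixed ordering of $W$, the simplicial coboundary of $\Delta[W]$:
\[ \sigma \mapsto \sum_{i\in W\setminus\sigma,\;\sigma\cup\{i\}\in\Delta} \pm\,(\sigma\cup\{i\}). \]
Thus $(R_\Delta\otimes K_\bullet)_{\mathbf{1}_W}$ is, after reindexing, the reduced (augmented) cochain complex of $\Delta[W]$, shifted so that homological degree $i$ corresponds to cochain degree $|W|-i-1$. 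Taking homology and using universal coefficients (which is trivial over a field) gives
\[ \beta_{i,\mathbf{1}_W}(\Delta) = \dim_k\tilde{H}^{|W|-i-1}(\Delta[W];k) = \dim_k\tilde{H}_{|W|-i-1}(\Delta[W];k). \]

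Finally I would check that $\beta_{i,\mathbf{a}}=0$ whenever $\mathbf{a}$ is not squarefree, either by invoking Yanagawa's result that $R_\Delta$ is a squarefree module (so its minimal resolution is supported in squarefree multidegrees), or more elementarily by producing a contracting homotopy on $(R_\Delta\otimes K_\bullet)_\mathbf{a}$ whenever some coordinate $a_i\geq 2$, using the variable $x_i$. Summing the squarefree contributions $\beta_{i,\mathbf{1}_W}$ over $W\subseteq[n]$ of size $j$ yields the formula. The main obstacle is the sign-chasing that turns the Koszul differential into the simplicial coboundary; one has to fix orientation conventions on the faces of $\Delta[W]$ compatible with the wedge-product orientation used in $K_\bullet$, and this is the only point that is not essentially bookkeeping.
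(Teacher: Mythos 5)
The paper does not actually prove this statement; it quotes Hochster's formula from \cite{H1} (Theorem 5.1 there) and uses it as a black box, so there is no in-paper proof to compare against. Your argument is the standard proof of Hochster's formula and it is correct: computing $\mathrm{Tor}_i^S(R_\Delta,k)$ via $R_\Delta\otimes_S K_\bullet$ in the fine grading, identifying the squarefree strand $(R_\Delta\otimes K_\bullet)_{\mathbf{1}_W}$ with the augmented cochain complex of $\Delta[W]$ placed so that homological degree $i$ carries the faces $W\setminus V$ of dimension $|W|-i-1$, and killing the non-squarefree strands by the contracting homotopy $x^\alpha e_V\mapsto \pm x^{\alpha-e_i}e_{V\cup\{i\}}$ when some $a_i\geq 2$ (which is legitimate since $\mathrm{supp}(\alpha-e_i)\subseteq\mathrm{supp}(\alpha)$ remains a face of $\Delta$). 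The degree bookkeeping checks out, and over a field the passage from $\tilde H^{|W|-i-1}$ to $\tilde H_{|W|-i-1}$ is immediate. The only part you leave schematic is the sign verification identifying the Koszul differential with the simplicial coboundary, which you correctly flag as routine once an ordering of $W$ is fixed.
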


In this paper we will (most often) restrict ourselves to the case $\Delta$ is a \emph{clique complex}, which by definition means the minimal non-faces of $\Delta$ are 1-simplices (edges).  Hence $I_{\Delta}$ is generated in degree 2.  The minimal nonfaces of $\Delta$ can then be considered a graph $G$, and in this case $I_{\Delta}$ is called the \emph{edge ideal} of the graph $G$.  Note that we can recover $\Delta$ as ${\tt Ind}(G)$, the independence complex of $G$, or equivalently as $\Delta(\bar G)$, the clique complex of the complement $\bar G$; we will adopt both perspectives in different parts of this paper.  To simplify notation we will use $I_G := I_{{\tt Ind}(G)}$ (resp. $R_G := R_{{\tt Ind}(G)}$) to denote the Stanley-Reisner ideal (resp. ring) associated to the graph $G$.  The ideal $I_G$ is called the \emph{edge ideal} of $G$.  We will often speak of algebraic properties of a graph $G$ and by this we mean the ring $R_G$ obtained as the quotient of $S$ by the edge ideal $I_G$.

\section{Complements of chordal graphs}
In this section we consider edge ideals $I_G$ in the case that $\bar G$ (the \emph{complement} of $G$) is a chordal graph.  A classical result in this context is a theorem of Fr\"{o}berg (\cite{F}) which states that the edge ideal $I_G$ has a linear resolution if and only if $\bar G$ is chordal.  Our main results in this section include a short proof of this theorem as well as an enumerative interpretation of the relevant Betti numbers.  We then turn to a consideration of bipartite graphs whose complements are chordal; it has been shown by Corso and Nagel (see \cite{CN}) that this class coincides with the so-called Ferrers graphs (see below for a definition).  We recover a formula from \cite{CN} regarding the Betti numbers of Ferrers graphs in terms of the associated Ferrers diagram and also give an enumerative interpretation of these numbers, answering a question raised in \cite{CN}.

Chordal graphs have several characterizations.  Perhaps the most straightforward definition is the following: a graph $G$ is \emph{chordal} if each cycle of length four or more has a \emph{chord}, an edge joining two vertices that are not adjacent in the cycle.  One can show (see \cite{D}) that chordal graphs are obtained recursively by attaching complete graphs to chordal graphs along complete graphs.  Note that this implies that in any chordal graph $G$ there exists a vertex $v \in G$ such that the neighborhood $N(G)$ induces a complete graph (take $v$ to be one of the vertices of $K_n$).

This last condition is often phrased in terms of the clique complex of the graph in the following way.  A facet $F$ of a simplicial complex $\Delta$ is called a \emph{leaf} if there exists a \emph{branch} facet $G \neq F$ such that $H \cap F \subseteq G \cap F$ for all facets $H \neq F$ of $\Delta$.  A simplicial complex $\Delta$ is a \emph{quasi-forest} if there is an ordering of the facets $(F_1, \cdots, F_k)$ such that $F_i$ is a leaf of $<F_1, \cdots F_{k-1}>$.   One can show that quasi-forests are precisely the clique complexes of chordal graphs.

\subsection{Betti numbers and linearity}
Suppose $G$ is the complement of a chordal graph.  As mentioned above, we can think of $I_G$ as the Stanley-Reisner ideal of either ${\tt Ind}(G)$ (the independence complex $G$) or of ${\tt Cl}(\bar G)$, the clique complex of the complement $\bar G$, which is assumed to be chordal.

Our study of the Betti numbers of complements of chordal graphs relies on the following simple observation regarding independence complexes of such graphs.

\begin{lemma}\label{lemma:chordal} If $G$ is a graph such that the complement $\bar{G}$ is a chordal graph with $c$ connected components, then
${\tt Ind}(G) = {\tt Cl}(\bar G)$ is homotopy equivalent to $c$ disjoint points.
\end{lemma}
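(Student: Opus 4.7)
The plan is to reduce to the connected case and then show that the clique complex of a connected chordal graph is contractible, using the dismantlability machinery developed in Section 2.

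First, I would observe that the clique complex distributes over disjoint unions: if $\bar G$ has connected components $\bar G_1, \dots, \bar G_c$, then ${\tt Cl}(\bar G) = \bigsqcup_{i=1}^c {\tt Cl}(\bar G_i)$. So it suffices to prove that for a connected chordal graph $H$ the clique complex ${\tt Cl}(H)$ is contractible; then ${\tt Cl}(\bar G)$ becomes a disjoint union of $c$ contractible spaces, which is homotopy equivalent to $c$ disjoint points.

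For the connected case I would argue by induction on $|V(H)|$. The base case is a single vertex, whose clique complex is a point. For the inductive step, invoke Dirac's classical theorem: every chordal graph has a simplicial vertex $v$, meaning $N(v)$ is a clique. Since $H$ is connected and has more than one vertex, pick any neighbor $w$ of $v$. Viewing $H$ as a reflexive graph (adding loops at every vertex in order to apply the folding framework recalled in Section 2), I would verify that $N[v] \subseteq N[w]$. Indeed $v \in N[w]$ via the edge $vw$, $w \in N[w]$ via the loop, and for any other $u \in N(v)\setminus\{w\}$, the simpliciality of $v$ forces $uw$ to be an edge, so $u \in N(w)$. Thus the map $v \mapsto w$ is a fold $H \to H \setminus \{v\}$, and by the fact cited in the paper (a fold of the underlying reflexive graph induces an elementary collapse of clique complexes preserving simple homotopy type) we get ${\tt Cl}(H) \simeq {\tt Cl}(H \setminus \{v\})$. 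Since the induced subgraph $H \setminus \{v\}$ is again chordal, and is still connected (a simplicial vertex whose neighborhood is a clique cannot be a cut vertex), induction finishes the argument: ${\tt Cl}(H \setminus \{v\})$ is contractible, hence so is ${\tt Cl}(H)$.

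The main obstacle is really just bookkeeping rather than a conceptual difficulty: one has to check that a simplicial vertex does produce a fold in the reflexive graph (a brief neighborhood calculation) and that simplicial vertex removal preserves both chordality and connectivity. Both are routine standard facts, but they are what makes the induction go through cleanly. Once this is in place, the chain of implications \emph{dismantlable} $\Rightarrow$ \emph{collapsible} $\Rightarrow$ \emph{contractible} recalled in Section 2 delivers the homotopy statement directly.
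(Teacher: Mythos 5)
Your proof is correct and uses the same key idea as the paper: fold away a simplicial vertex (guaranteed by Dirac's theorem) onto one of its neighbors, invoking the fold-induces-collapse fact from Section 2, and induct. The only difference is organizational --- you split off the connected components first and prove contractibility for each, whereas the paper runs a single induction on $G$ and handles disconnectedness by peeling off isolated vertices of $\bar G$; both work equally well.
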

\begin{proof}
We proceed by induction on the number of vertices of $G$. The lemma is clearly true for the one vertex graph and so we assume that $G$ has more than one vertex.  If there is an isolated vertex $v$ in $\bar{G}$ then ${\tt Cl}(\bar G)$ is homotopy equivalent to the disjoint union of ${\tt Cl}(\bar G \setminus \{v\})$ and a point.  If there are no isolated vertices in $\bar{G}$, we use the fact that any chordal graph has a vertex $v \in G$ whose neighborhood induces a complete graph.  The neighborhood $N(v)$ in $\bar{G}$ is nonempty since $v$ is not isolated by assumption.  For any vertex $w \in N(v)$ we have $N(v) \subseteq N(w)$ and hence ${\tt Cl}(\bar G)$ folds onto the homotopy equivalent ${\tt Cl}(\bar G \setminus \{v\}) = {\tt Ind}(G \setminus \{v\})$.  Removing $v$ in this case did not change the number of connected components of $\bar{G}$.
\end{proof}

This then gives us a formula for the Betti numbers of complements of chordal graphs.

\begin{theorem}\label{compchordalbetti}
Let $\bar{G}$ be a chordal graph. If $i\neq j-1$ then $\beta_{i,j}(G)=0$ and otherwise
\[\beta_{i,j}(G)=\sum_{I\in {V(G) \choose j}} (-1+\# \textrm{ connected components of }\overline{G[I]})\] 
\end{theorem}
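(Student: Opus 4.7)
The plan is to combine Hochster's formula (Theorem \ref{hochster}) with Lemma \ref{lemma:chordal}, since the former expresses Betti numbers as a sum of dimensions of reduced homology groups of induced subcomplexes, and the latter precisely pins down those homotopy types in the chordal setting.

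First, I would recall that for $\Delta = {\tt Ind}(G)$ and $W \subseteq V(G)$, the induced subcomplex $\Delta[W]$ is just ${\tt Ind}(G[W])$, where $G[W]$ is the induced subgraph of $G$ on $W$. Hochster's formula then becomes
\[\beta_{i,j}(G) = \sum_{W \in \binom{V(G)}{j}} \dim_k \tilde{H}_{j-i-1}\big({\tt Ind}(G[W]);k\big).\]
Next I would use the standard fact that induced subgraphs of chordal graphs are chordal: since $\bar G$ is chordal, so is $\overline{G[W]} = \bar{G}[W]$ for every $W \subseteq V(G)$. In particular, Lemma \ref{lemma:chordal} applies to each $G[W]$.

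The key step is then to invoke Lemma \ref{lemma:chordal}, which tells us that ${\tt Ind}(G[W])$ is homotopy equivalent to a disjoint union of $c(W)$ points, where $c(W)$ denotes the number of connected components of $\overline{G[W]}$. The reduced homology of such a space is concentrated in degree $0$ with $\dim_k \tilde{H}_0 = c(W) - 1$ and $\tilde{H}_r = 0$ for all $r \neq 0$. Substituting into Hochster's formula, the summand for $W$ vanishes unless $j - i - 1 = 0$, i.e.\ $j = i+1$, which immediately gives the claimed vanishing statement. When $j = i+1$, the summand equals $c(W) - 1 = -1 + \#\{\text{connected components of } \overline{G[W]}\}$, yielding exactly the stated formula after summing over $W \in \binom{V(G)}{j}$.

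I do not foresee a genuine obstacle: the real content has been packaged into Lemma \ref{lemma:chordal}, and Hochster's formula is being applied in a completely direct way. The only minor point requiring care is confirming that $\overline{G[W]} = \bar G[W]$ so that ``complement of an induced subgraph'' equals ``induced subgraph of the complement,'' which ensures chordality is inherited and the lemma can be applied subset-by-subset.
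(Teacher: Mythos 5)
Your proposal is correct and follows exactly the same route as the paper: apply Hochster's formula, note that induced subgraphs of chordal graphs are chordal, and use Lemma \ref{lemma:chordal} to reduce all homology to degree $0$, where the dimension is one less than the number of connected components of $\overline{G[W]}$. Your write-up simply fills in the details (in particular the identification $\overline{G[W]} = \bar G[W]$) more explicitly than the paper does.
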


\begin{proof}
We employ Hochster's formula (Theorem \ref{hochster}).  Since induced subgraphs of chordal graphs are chordal, Lemma \ref{lemma:chordal} implies that the only nontrivial reduced homology we need to consider is in dimension 0, which in this case is determined by the number of connected components of the induced subgraphs.  The result follows.
\end{proof}

\begin{corollary} Suppose $G$ be a graph with $n$ vertices such that $\bar{G}$ is chordal.  If $\bar G$ is a complete graph then the projective dimension of $G$ is 0, and otherwise the projective dimension is $M-1$, where $M$  is the largest number of vertices in an induced disconnected graph of $\bar G$.
\end{corollary}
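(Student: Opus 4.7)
The plan is to read off the projective dimension directly from the Betti number formula in Theorem \ref{compchordalbetti}. Recall that $\pdim(R_G)$ is the largest index $i$ for which some $\beta_{i,j}(G)$ is nonzero, so it suffices to determine which $i$ produce a nonvanishing linear Betti number.

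First I would observe that each summand $-1+\#\{\text{connected components of }\overline{G[I]}\}$ appearing in Theorem \ref{compchordalbetti} is a nonnegative integer, and is strictly positive precisely when $\overline{G[I]} = \bar G[I]$ is disconnected. Since distinct subsets $I$ contribute independent nonnegative summands, we have $\beta_{i,i+1}(G)\neq 0$ if and only if there exists a subset $I\subseteq V(G)$ with $|I|=i+1$ such that $\bar G[I]$ is disconnected. Combined with the vanishing $\beta_{i,j}(G)=0$ for $j\neq i+1$ (also from Theorem \ref{compchordalbetti}), this reduces the computation of $\pdim(R_G)$ to finding the largest cardinality of an induced disconnected subgraph of $\bar G$.

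Next I would split into the two cases stated. If $\bar G$ is complete then every induced subgraph of $\bar G$ is complete and hence connected, so the sum in Theorem \ref{compchordalbetti} is empty (gives 0) for every $i\geq 1$; since $G$ has no edges the minimal free resolution of $R_G=S$ has length $0$, giving $\pdim(R_G)=0$. If $\bar G$ is not complete, then any two nonadjacent vertices of $\bar G$ give an induced disconnected subgraph on two vertices, so the integer $M$ in the statement is well-defined and satisfies $M\geq 2$. By the equivalence in the previous paragraph, $\max\{i:\beta_{i,i+1}(G)\neq 0\}$ equals $\max\{|I|-1:\bar G[I]\text{ is disconnected}\}=M-1$, which is the desired value of $\pdim(R_G)$.

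There is no substantial obstacle here: the corollary is essentially a bookkeeping consequence of Theorem \ref{compchordalbetti}, and the only point to be a bit careful about is to note the nonnegativity of each summand so that positive summands cannot cancel and so nonvanishing of $\beta_{i,i+1}(G)$ can be detected one subset $I$ at a time.
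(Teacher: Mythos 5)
Your proposal is correct and matches the paper's (implicit) argument: the corollary is stated there without proof as a direct consequence of Theorem \ref{compchordalbetti}, and your deduction—using nonnegativity of each summand $-1+\#\{\text{components of }\overline{G[I]}\}$ to detect nonvanishing of $\beta_{i,i+1}$ subset by subset, plus the separate treatment of the complete-graph case where $I_G=0$—is exactly the bookkeeping the authors intend. No gaps.
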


In other words, if $\bar G$ is $k$-connected but not $(k+1)$-connected, then the projective dimension of $R_G$ is $n - k- 1$.  Applying the Auslander-Buchsbaum formula we obtain $\dim S - \depth R_G = \pdim R_G$, and from this it follows that the depth of $R_G$ is $k+1$.

As mentioned, we can also give a short proof of the following theorem of Fr\"{o}berg from \cite{F}.

\begin{theorem}\label{Froberg}
For any graph $G$ the edge ideal $I_G$ has a $2$-linear minimal resolution if and only if $G$ is the complement of a chordal graph.
\end{theorem}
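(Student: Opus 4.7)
My plan is to prove the two directions separately, using Theorem \ref{compchordalbetti} for one implication and Hochster's formula applied to induced cycles for the converse.

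For the forward direction (complement chordal $\Rightarrow$ $2$-linear resolution), there is almost nothing to do: Theorem \ref{compchordalbetti} already states that $\beta_{i,j}(G) = 0$ whenever $j \neq i+1$, which is precisely the vanishing condition for a $2$-linear resolution in the indexing convention of the paper. So this direction is immediate once the previous theorem is in hand.

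The substantive direction is the converse, which I would prove by contrapositive. Suppose $\bar{G}$ is not chordal; then $\bar{G}$ contains an induced cycle $C_k$ on some vertex set $W \subseteq V(G)$ with $k \geq 4$. The key observation is that the induced subcomplex on $W$ of the relevant independence complex is easy to identify:
\[
{\tt Ind}(G)[W] \;=\; {\tt Ind}(G[W]) \;=\; {\tt Cl}(\bar{G}[W]) \;=\; {\tt Cl}(C_k).
\]
Since $k \geq 4$, the graph $C_k$ has no triangles, so ${\tt Cl}(C_k)$ coincides with $C_k$ itself viewed as a $1$-dimensional simplicial complex, which is homotopy equivalent to $S^1$. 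In particular $\tilde H_1({\tt Ind}(G)[W];k) \neq 0$.

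Now I would apply Hochster's formula (Theorem \ref{hochster}) with $j = k = |W|$: the term indexed by $W$ contributes $\dim_k \tilde H_{k-i-1}({\tt Ind}(G)[W];k)$ to $\beta_{i,k}(G)$, and this is nonzero when $k - i - 1 = 1$, i.e.\ $i = k-2$. Hence $\beta_{k-2,\,k}(G) \neq 0$. But $k - (k-2) = 2 \neq 1$, so this Betti number lies off the linear strand, contradicting the hypothesis that $I_G$ has a $2$-linear resolution. This completes the contrapositive.

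I do not anticipate a serious obstacle: the real content has been pushed into Lemma \ref{lemma:chordal} and Theorem \ref{compchordalbetti}, and the only thing to check carefully in the converse is the identification ${\tt Cl}(C_k) \simeq S^1$ for $k \geq 4$ and the bookkeeping between the homological degree $k-i-1$ in Hochster's formula and the slope $j - i$ of the linear strand. Both are routine.
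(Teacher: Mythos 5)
Your proposal is correct and follows essentially the same route as the paper: the forward direction is read off from Theorem \ref{compchordalbetti}, and the converse uses an induced cycle $C_k$ ($k\geq 4$) in $\bar G$ together with Hochster's formula to exhibit a nonzero $\beta_{k-2,k}$ off the linear strand. Your write-up is in fact slightly more careful than the paper's, since you spell out the identification ${\tt Ind}(G)[W]={\tt Cl}(C_k)\simeq S^1$ and note that no cancellation can occur in Hochster's sum of nonnegative dimensions.
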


\begin{proof}
If $\bar G$ is chordal then Theorem \ref{compchordalbetti} implies that the only nonzero Betti numbers $\beta_{i,j}$ occur when $i = j-1$.  Hence $I_G$ has a 2-linear resolution.  If $\bar G$ is not chordal, there exists an induced cycle $C_j \subseteq \bar G$ of length $j > 3$ and this yields a nonzero element in $\tilde H_1\big({\tt Cl}(C_j)\big) = \tilde H_{j-(j-2)-1}\big({\tt Cl}(C_j)\big)$.  Hochester's formula then implies $\beta_{j-2},j \neq 0$ and hence $I_G$ does not have a 2-linear resolution.
\end{proof}

Among the complements of chordal graphs there are certain graphs that we can easily verify to be Cohen-Macaulay.  For this we need the following notion. 

\begin{definition}
A \emph{$d$-tree} $G$ is a chordal reflexive graph whose clique complex ${\tt Cl}(G)$ is pure of dimension $d+1$, and admits an ordering of the facets $(F_1, \cdots, F_k)$ such that $F_i ~\cap~<F_1, \cdots F_{k-1}>$ is a $d$-simplex.
\end{definition}

Recall that we can identify the edge ideal $I_G$ of a graph $G$ with the Stanley-Reisner ideal of the complex ${\tt Ind}(G) = {\tt Cl}(\bar G)$.  We see that if a graph $H$ is a $d$-tree then then complex ${\tt Cl}(H)$ is pure and shellable.  Purity is part of the definition of a $d$-tree and the ordering of the facets as above determines a shelling order.  As discussed above, we know that a pure shellable complex is Cohen-Macaulay and hence complements of $d$-trees are Cohen-Macaulay.  We record this as a proposition.
\begin{proposition}
Suppose $G$ is a graph such that the complement $\bar G$ is a $d$-tree.  Then the complex ${\tt Ind}(G)$ is pure and shellable, and hence the ring $R_G$ is Cohen Macaulay.
\end{proposition}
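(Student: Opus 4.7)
The plan is essentially to unpack the definition of a $d$-tree and verify that the facet ordering it supplies is a shelling order, then invoke the standard implication \emph{pure shellable} $\Rightarrow$ \emph{Cohen--Macaulay}. Since the excerpt already emphasizes the identification $I_G = I_{{\tt Ind}(G)}$ and ${\tt Ind}(G) = {\tt Cl}(\bar G)$, I would begin by translating the hypothesis: because $\bar G$ is a $d$-tree, the complex $\Delta := {\tt Cl}(\bar G) = {\tt Ind}(G)$ is pure of dimension $d+1$ by definition, so the purity claim in the proposition is immediate and it remains to produce a shelling.

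Next, I would take the ordering $(F_1,\dots,F_k)$ of facets of ${\tt Cl}(\bar G)$ guaranteed by the $d$-tree definition, with the property that $F_i \cap \langle F_1,\dots,F_{i-1}\rangle$ is a $d$-simplex for each $i\geq 2$. The shelling condition requires, for each $k\geq 2$, that the subcomplex
\[
\Bigl(\bigcup_{i=1}^{k-1} \bar F_i\Bigr) \cap \bar F_k
\]
be pure of dimension $\dim F_k - 1 = d$. But this subcomplex is exactly the simplicial complex on $F_k \cap \langle F_1,\dots,F_{k-1}\rangle$, which by hypothesis is a single $d$-simplex together with all its subfaces; such a subcomplex is trivially pure of dimension $d$. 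This verifies the shelling condition on the nose.

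Having established that $\Delta$ is pure and shellable, the Cohen--Macaulay conclusion follows from the chain of implications recalled in Section 2 (pure shellable $\Rightarrow$ sequentially CM, and for pure complexes sequentially CM coincides with CM), together with the fact that $\Delta$ is CM if and only if its Stanley--Reisner ring $R_\Delta = R_G$ is CM in the algebraic sense.

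There is no real obstacle here: the content of the argument is essentially bookkeeping that matches the $d$-tree axioms against the non-pure shellability definition of Bj\"orner--Wachs recalled earlier. The only point requiring a moment's care is the dimension arithmetic ($\dim F_k = d+1$ as a face of $\Delta$, hence one needs intersections of dimension $d$), and interpreting the intersection of facets inside the complex correctly so that a $d$-simplex on the nose qualifies as pure of dimension $d$.
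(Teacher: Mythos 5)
Your proposal is correct and follows essentially the same route as the paper: purity is read off directly from the $d$-tree definition, the prescribed facet ordering is checked to be a shelling order (the intersection being a single $d$-simplex is trivially pure of dimension $d = \dim F_k - 1$), and the conclusion follows from pure shellable $\Rightarrow$ Cohen--Macaulay. The paper's own proof is just a terser version of this same bookkeeping.
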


\subsection{Ferrers graphs}

In this section we turn our attention to complements of chordal graphs which are also \emph{bipartite}.  It is shown by Corso and Nagel in \cite{CN} that the class of such graphs corresponds to the class of Ferrers graphs, which are defined as follows.  Given a Ferrers diagram (a partition) with row lengths $\lambda_1 \geq \lambda_2 \geq \cdots \geq \lambda_m$, the \emph{Ferrers graph} $G_{\lambda}$ is a bipartite graph with vertex set $\{r_1,r_2, \dots, r_m\} \coprod \{c_1,c_2, \dots,c_{\lambda_1}\}$ and with adjacency given by $r_i \sim c_j$ and edge if $j\leq \lambda_i$. 

In \cite{CN} the authors construct minimal (cellular) resolutions for the edge ideals of Ferrers graphs and give an explicit formula for their Betti numbers.  We wish to apply our basic combinatorial topological tools to understand the independence complex of such graphs; in this way we recover the formula for the Betti numbers and in the process give a simple enumerative interpretation for these numbers in terms of the Ferrers diagram (answering a question posed in \cite{CN}) 

\begin{proposition}\label{Ferrerfold} Suppose $G$ is a Ferrers graph associated to a Ferrers diagram $\lambda = (\lambda_1 \geq \cdots \geq \lambda_n)$. If $\lambda_1=\cdots=\lambda_m$ (so that $G_\lambda$ is a complete bipartite graph) then ${\tt Ind}(G_\lambda)$ is homotopy equivalent to a space of two disjoint points, and otherwise it is contractible.
\end{proposition}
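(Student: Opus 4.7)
The plan is to apply the folding machinery from Section 2 directly to the reflexive complement $\bar G_\lambda$, since ${\tt Ind}(G_\lambda) = {\tt Cl}(\bar G_\lambda)$ and folds preserve homotopy type. In $\bar G_\lambda$ the row vertices form a clique among themselves, the column vertices form a clique among themselves, and a cross-edge $r_i c_j$ appears exactly when $j > \lambda_i$. Consequently the folding criterion $N_{\bar G_\lambda}(r_a) \subseteq N_{\bar G_\lambda}(r_b)$ for two row vertices reduces to $\{c_j : j > \lambda_a\} \subseteq \{c_j : j > \lambda_b\}$, which is equivalent to $\lambda_b \leq \lambda_a$. So a longer row can always be folded into a shorter one.

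I would first dispatch the complete bipartite case. If all $\lambda_i$ are equal, then in $G_\lambda$ every row vertex is joined to every column vertex, so no independent set can contain both a row and a column. Hence ${\tt Ind}(G_\lambda)$ is the disjoint union of the full simplex on the rows and the full simplex on the columns, which is homotopy equivalent to $S^0$, i.e.\ two points.

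For the remaining case, assume some $\lambda_i$ is strictly smaller than $\lambda_1$, and let $j$ be the largest index with $\lambda_j = \lambda_1$ (so that $\lambda_{j+1} < \lambda_1$ exists). I would then fold $r_1, r_2, \ldots, r_j$ successively into $r_{j+1}$; each of these folds is valid by the criterion above, since $\lambda_{j+1} \leq \lambda_1 = \lambda_i$ for $i \leq j$. The resulting graph is the Ferrers graph on the rows $r_{j+1}, \ldots, r_n$ but still on the same $\lambda_1$ columns $c_1, \ldots, c_{\lambda_1}$. Since the maximum remaining row length is $\lambda_{j+1} < \lambda_1$, the column $c_{\lambda_1}$ is an isolated vertex of this new graph. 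An isolated vertex of a graph $G$ is a cone point of ${\tt Ind}(G)$, so the resulting independence complex is contractible, and hence so is ${\tt Ind}(G_\lambda)$.

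The main potential obstacle is verifying that a whole sequence of folds remains valid after earlier ones have been performed, but this is essentially automatic here because the row-folding criterion depends only on the two row lengths and so is unaffected by the previous removals. A clean stylistic alternative would be to invoke the fact (due to Corso--Nagel) that $\bar G_\lambda$ is chordal and then apply Lemma \ref{lemma:chordal}, reducing the problem to counting connected components of $\bar G_\lambda$: there are two components precisely when no cross-edge $r_i c_j$ with $j > \lambda_i$ exists, i.e.\ precisely in the complete bipartite case.
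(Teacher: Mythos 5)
Your proof is correct and follows essentially the same route as the paper's: fold away row vertices whose neighborhoods contain that of a shorter row, then either observe that $c_{\lambda_1}$ becomes isolated (so the independence complex is a cone) or treat the complete bipartite case separately. The only cosmetic differences are that the paper folds all rows into the shortest row $r_m$ and finishes the complete bipartite case by further folding the columns down to a single edge, whereas you stop folding earlier and note directly that ${\tt Ind}$ of a complete bipartite graph is a disjoint union of two nonempty simplices, hence homotopy equivalent to two points.
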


\begin{proof}
The neighborhood of $r_i$ includes the neighborhood of $r_m$ for all $1\leq i<m$, and hence in the complex ${\tt Ind}(G)$ we can fold away the vertices $r_1,r_2, dots,r_{m-1}$.
If $\lambda_1>\lambda_m$ then the vertex $c_{\lambda_1}$ is isolated after the foldings and
thus ${\tt Ind}(G_\lambda)$ is a cone with apex $c_{\lambda_1}$ and hence contractible.  If $\lambda_1=\lambda_m$
then we are left with a star with center $r_m$. We can continue to fold away $c_2,c_3,\ldots,c_{\lambda_1}$
since they have the same neighborhood as $c_1$ and we are left with the two adjacent vertices $r_m$ and $c_1$.  The result follows since the independence complex of an edge is two disjoint points.
\end{proof}

We next turn to our desired combinatorial interpretation of the Betti numbers of the ideals associated to Ferrers graphs.  If $\lambda = (\lambda_1 \geq \dots \geq \lambda_n)$ is a Ferrers diagram we define an \textit{$l \times w$ rectangle} in $\lambda$ to be a choice of $l$ rows $r_{i_1} < r_{i_2} < \cdots < r_{i_l}$ and $w$ columns $c_{j_1} < c_{j_2} < \cdots < c_{j_w}$ such that $\lambda$ contains each of the resulting entries, i.e. $\lambda_{i_l} \geq j_w$.  We say that the rectangle has \textit{size} $l + w$. 

\begin{theorem}\label{Ferrerbetti}
If $G_{\lambda}$ is a Ferrers graph associated to the partition $\lambda = (\lambda_1 \geq \cdots \geq \lambda_n)$, then the Betti numbers of $G_{\lambda}$ are zero unless $j = i+1$, in which case $\beta_{i,i+1}(G_{\lambda})$ is the number of rectangles of size $i+1$ in $\lambda$.  This number is given explictly by:
\[\beta_{i,i+1}(G_{\lambda}) = {\lambda_1 \choose i} + {\lambda_2+1 \choose i} + {\lambda_3+2 \choose i} + \cdots + {\lambda_n + n-1 \choose i} - {n \choose i+1}.\]
\end{theorem}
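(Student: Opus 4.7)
The plan is to combine Fr\"oberg's theorem, Hochster's formula, and a direct extension of the folding argument in Proposition \ref{Ferrerfold} to all induced subgraphs, followed by a routine enumeration of rectangles.

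Since $\overline{G_\lambda}$ is chordal (by the Corso--Nagel characterization mentioned at the start of this subsection), Theorem \ref{Froberg} immediately gives that $I_{G_\lambda}$ has a $2$-linear resolution, so $\beta_{i,j}(G_\lambda)=0$ unless $j=i+1$. For $j=i+1$, Hochster's formula (Theorem \ref{hochster}) reduces to
\[\beta_{i,i+1}(G_\lambda) = \sum_{W \in \binom{V(G_\lambda)}{i+1}} \dim_k \tilde H_0\bigl({\tt Ind}(G_\lambda[W]);k\bigr),\]
so the task becomes identifying which $W$ contribute.

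For $W = R \sqcup C$ with $R$ a set of rows and $C$ a set of columns, the induced subgraph $G_\lambda[W]$ is still ``Ferrers-shaped'': after ordering the rows of $R$ by decreasing $\lambda$-value, the $W$-neighborhood of each row contains the $W$-neighborhood of the next. Repeating the argument in the proof of Proposition \ref{Ferrerfold}, we fold all rows in $R$ onto the row $r$ of smallest $\lambda$-value. If some $c \in C$ has index exceeding $\lambda_r$, then $c$ becomes isolated after the folding and ${\tt Ind}(G_\lambda[W])$ is a cone, hence contractible. Otherwise every row in $R$ is joined in $G_\lambda$ to every column in $C$ -- which is exactly the condition that $W$ is a rectangle -- and $G_\lambda[W]$ is complete bipartite, so further folding of the columns reduces the independence complex to that of a single edge, i.e.\ two disjoint points. (The degenerate cases $R=\emptyset$ or $C=\emptyset$ yield a simplex, and are not rectangles.) Therefore $\dim_k \tilde H_0({\tt Ind}(G_\lambda[W]);k)$ is $1$ precisely when $W$ forms a rectangle of size $i+1$, and $0$ otherwise, yielding the combinatorial description.

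To obtain the closed-form expression, I would enumerate rectangles of size $i+1$ by their largest row index. Fixing the last row as $r_m$, one must still choose $l-1$ further rows from $\{r_1,\dots,r_{m-1}\}$ and $w \geq 1$ columns from $\{c_1,\dots,c_{\lambda_m}\}$ with $l+w=i+1$. Vandermonde's identity gives $\binom{m-1+\lambda_m}{i}$ when $w=0$ is also permitted, so subtracting the extraneous term $\binom{m-1}{i}$, summing over $m=1,\dots,n$, and invoking the hockey-stick identity $\sum_{m=1}^n \binom{m-1}{i}=\binom{n}{i+1}$ produces the claimed formula. The main obstacle is the topological step: one must verify that the single fold-and-cone argument really does detect the rectangle condition uniformly across all induced subgraphs, handling isolated vertices and one-sided restrictions correctly; once that dichotomy is in place, the rest is Hochster plus bookkeeping.
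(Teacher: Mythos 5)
Your proposal is correct, and the topological half is essentially the paper's argument: the paper likewise notes that induced subgraphs of Ferrers graphs are Ferrers graphs and applies Proposition \ref{Ferrerfold} to conclude that only the complete bipartite induced subgraphs (equivalently, rectangles) contribute a one-dimensional $\tilde H_0$, with everything else contractible. Your choice to get the vanishing for $j\neq i+1$ from Fr\"oberg's theorem rather than directly from the homotopy type is a cosmetic difference. Where you genuinely diverge is the closed-form count: the paper proves the binomial formula by a double induction, first on $n$ and then on $m=\lambda_n$, peeling off the last cell of the diagram and tracking the new rectangles created at each step. You instead count directly, classifying rectangles of size $i+1$ by their largest row index $m$, applying Vandermonde to get $\binom{\lambda_m+m-1}{i}-\binom{m-1}{i}$ for each $m$, and summing the correction terms via the hockey-stick identity to produce $\binom{n}{i+1}$. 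Your computation checks out (the substitution $l'=l-1$ turns the constraint $l\geq 1$, $w\geq 1$, $l+w=i+1$ into exactly the Vandermonde sum minus the $w=0$ term), and it is arguably cleaner: it gives the formula in one closed pass and makes transparent where the $\binom{n}{i+1}$ correction comes from, whereas the paper's induction mirrors the algebraic recursion of Corso--Nagel. The one point you flag as an obstacle, verifying the fold-and-cone dichotomy uniformly over induced subgraphs, is handled correctly in your sketch, including the degenerate one-sided cases $R=\emptyset$ or $C=\emptyset$.
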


\begin{proof}
We use Hochester's formula and Proposition \ref{Ferrerfold}.  The subcomplex of ${\tt Ind}(G_{\lambda})$ induced by a choice of $j$ vertices is precisely the independence complex of the subgraph $H$ of $G_{\lambda}$ induced on those vertices.  An induced subgraph of a Ferrers graph is a Ferrers graph and from Proposition \ref{Ferrerfold} we know that the induced complex ${\tt Ind}(H)$ has nonzero reduced homology only if the underlying subgraph $H \subseteq G_\lambda$ is a complete bipartite subgraph, in which case $j = i+1$ and $\dim_k \tilde H_{j-i-1}({\tt Ind}(H);k) = 1$.  An induced complete bipartite graph on $j = i+1$ vertices in $G_\lambda$ corresponds precisely to a choice of an $l \times w$ rectangle with $l + w = j$, where $\{r_{i_1}, \dots, r_{i_l}\}$ and $\{c_{j_1}, \dots, c_{j_w}\}$ are the vertex set.

To determine the formula we follow the strategy employed in \cite{CN}, where the authors use algebraic means to determine the Betti numbers.  Here we proceed with the same inductive strategy but only employ the combinatorial data at hand.  

We use induction on $n$.  If $n = 1$ then $\lambda = \lambda_1$ and the number of rectangles of size $i+1$ is ${\lambda_1 \choose i} = {\lambda_1 \choose i} - {1 \choose i+1}$.

Next we suppose $n \geq 2$ and proceed by induction on $m := \lambda_n$.  Let $\lambda^\prime := (\lambda_1 \geq \lambda_2 \geq \cdots \geq \lambda_{n-1} \geq \lambda_n - 1)$ be the Ferrers diagram obtained by subtracting 1 from the entry $\lambda_n$ in $\lambda$.  First suppose $m = 1$ so that $\lambda^\prime$ has $n-1$ rows.  When we add the $\lambda_n = 1$ entry to the Ferrers diagram $\lambda^\prime$ the only new rectangles of size $i+1$ that we get are $(i+1) \times 1$ rectangles with the entry $\lambda_n$ included.  There are ${n-1 \choose i-1}$ such rectangles, and hence by induction we have
\[ \beta_{i,i+1}(G_\lambda) = \beta_{i,i+1}(G_{\lambda^\prime}) + {n-1 \choose i-1}\]
\[ = {\lambda_1 \choose i} + {\lambda_2+1 \choose i} + \cdots + {\lambda_{n-1} + n - 1-1 \choose i} - {n-1 \choose i+1} + {n-1 \choose i-1}\]
\[ = {\lambda_1 \choose i} + {\lambda_2+1 \choose i} + \cdots + {\lambda_{n-1} + n - 2 \choose i} - {n-1 \choose i+1} + {n \choose i} - {n-1 \choose i}\]
\[ = {\lambda_1 \choose i} + {\lambda_2+1 \choose i} + \cdots + {\lambda_{n-1} + n - 2 \choose i} - {\lambda_n + n - 1 \choose i} - {n \choose i+1}.\]

Now, if $m > 1$ we see that the rectangles of size $i+1$ in $\lambda$ are precisely those in $\lambda^\prime$ along with the rectangles of size $i+1$ in $\lambda$ which include the entry $(n,\lambda_n)$.  The number of rectangles of the latter kind is ${\lambda_n + n -2 \choose i-1}$ since we choose the remaining rows from $\{r_1, \dots, r_{n-1}\}$ and the columns from $\{c_1, \dots, c_{\lambda_n-1}\}$.  Hence by induction on $m$ we get
\[ \beta_{i,i+1}(G_\lambda) = \beta_{i,i+1}(G_{\lambda^\prime}) + {\lambda_n + n - 2 \choose i-1}\]
\[ = {\lambda_1 \choose i} + \cdots + {\lambda_n - 1 + n - 1 \choose i} - {n \choose i+1} + {\lambda_n + n - 2 \choose i-1}\]
\[ = {\lambda_1 \choose i} + \cdots + {\lambda_n + n - 1 \choose i} - {n \choose i+1}.\]

\end{proof}

In particular the edge ideal of a Ferrers graphs has a 2-linear minimal free resolution.  This of course also follows from Fr\"{o}berg's Theorem \ref{Froberg} and the fact (mentioned above) that the complements of Ferrers graphs are chordal.

\section{Chordal graphs, ears and whiskers}
In this section we consider edge ideals $I_G$ in that case that $G$ is a chordal graph.  Perhaps the strongest result in this area is a theorem of Francisco and Van Tuyl from \cite{FT} which says that the ring $R_G$ is sequentially Cohen-Macaulay whenever the graph $G$ is chordal.  We say that a graded $S$-module is \emph{sequentially Cohen-Macaulay} (over $k$) if there exists a finite filtration of graded $S$-modules
\[0 = M_0 \subset M_1 \cdots \subset M_j = M\]
\noindent
such that each quotient $M_i/M_{i-1}$ is Cohen-Macaulay, and such that the (Krull) dimensions of the quotients are increasing:
\[\dim(M_1/M_0) < \dim(M_2/M_1) < \cdots < \dim(M_j/M_{j-1}).\]
Here we present a short proof of the following strengthening of the result from \cite{FT}.

\begin{theorem}\label{chordalVD}
If $G$ is a chordal graph then the complex ${\tt Ind}(G)$ is vertex-decomposable, and hence the associated edge deal $I_G$ is sequentially Cohen-Macaulay.
\end{theorem}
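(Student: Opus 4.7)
The plan is to prove vertex-decomposability of ${\tt Ind}(G)$ by induction on $|V(G)|$, with the shedding vertex supplied by a classical structural property of chordal graphs. Recall that every chordal graph with at least one edge contains a \emph{simplicial} vertex $u$, meaning one whose open neighborhood $N(u)$ induces a complete subgraph. By choosing $u$ inside the connected component of some edge, we may moreover require that $u$ has at least one neighbor.

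The base case of the induction is when $G$ has no edges: then every subset of $V(G)$ is independent, so ${\tt Ind}(G)$ is itself a simplex and is vertex-decomposable by clause (1) of Definition \ref{defVD}. For the inductive step, pick a simplicial vertex $u$ of $G$ together with a neighbor $v \in N(u)$. Lemma \ref{indcomplex} identifies $\del_{{\tt Ind}(G)}(v) = {\tt Ind}(G \setminus \{v\})$ and $\lk_{{\tt Ind}(G)}(v) = {\tt Ind}(G \setminus (\{v\} \cup N(v)))$. Since induced subgraphs of chordal graphs are chordal and both of these graphs have strictly fewer vertices than $G$, the inductive hypothesis gives that both $\del_{{\tt Ind}(G)}(v)$ and $\lk_{{\tt Ind}(G)}(v)$ are vertex-decomposable.

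The heart of the argument is to verify the shedding condition in clause (2) of Definition \ref{defVD}: every facet of $\del_{{\tt Ind}(G)}(v)$, i.e.\ every maximal independent set $M$ of $G \setminus \{v\}$, is also a facet of ${\tt Ind}(G)$. Equivalently, $M$ must meet $N(v)$. Suppose for contradiction that $M \cup \{v\}$ is still independent in $G$, so no element of $M$ lies in $N(v)$. Since $uv \in E(G)$ we have $u \notin M$, and then maximality of $M$ in $G \setminus \{v\}$ forces some $w \in M$ to lie in $N(u)$, with $w \neq v$. But $u$ is simplicial, so $N(u)$ is a clique; in particular $v$ and $w$ are adjacent, contradicting $w \notin N(v)$. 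Hence $v$ is a shedding vertex.

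Combining the three pieces closes the induction and shows ${\tt Ind}(G)$ is vertex-decomposable; the sequentially Cohen-Macaulay conclusion for $I_G$ is then immediate from the chain of implications (vertex-decomposable $\Rightarrow$ shellable $\Rightarrow$ sequentially Cohen-Macaulay) recorded in Section 2. I expect the main obstacle to be exactly the identification of the right vertex to shed. Shedding the simplicial vertex $u$ itself does not work, because removing $u$ can produce new maximal independent sets which extend back to facets of ${\tt Ind}(G)$ via $u$. The crucial observation is to shed instead a \emph{neighbor} $v$ of a simplicial $u$: the clique structure of $N(u)$ is then precisely what forces any putative witness $w \in M$ to be adjacent to $v$, turning what would otherwise be a routine induction into an actual proof.
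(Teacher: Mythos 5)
Your proof is correct and follows essentially the same route as the paper: both shed a neighbor $v$ of a simplicial vertex, use Lemma \ref{indcomplex} plus the heredity of chordality to handle the link and deletion by induction, and use the clique structure of the simplicial vertex's neighborhood to verify that every facet of the deletion is a facet of ${\tt Ind}(G)$. Your writeup is in fact slightly more careful than the paper's about the base case and about guaranteeing the simplicial vertex has a neighbor.
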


\begin{proof}
We use induction on the number of vertices of $G$.  Since $G$ is chordal there exists a vertex $x$ such that $N(x) = \{v, v_1, \dots v_k\}$ is a complete graph.  By Lemma \ref{indcomplex} we have that $\del_{{\tt Ind}(G)} (v) = {\tt Ind}\big(G \backslash \{v\}\big)$ and $\lk_{{\tt Ind}(G)}(v) = {\tt Ind}\big(G \backslash (\{v\} \cup N(v))\big)$, and hence by induction both complexes are vertex-decomposable.  Also, if $\sigma$ is a maximal face of $\del_{{\tt Ind}(G)} (v)$ then $\sigma$ must contain an element of $\{x, v_1, \dots, v_k\}$, and hence must be a maximal face of of ${\tt Ind}(G)$.  Hence $\Delta = {\tt Ind}(G)$ is vertex decomposable.
\end{proof}

A related result in this area is the main theorem from \cite{BM}, where it is shown that the order complex of a (finite) interval order is shellable.  An interval order is a poset whose elements are given by intervals in the real line, with disjoint intervals ordered according to their relative position.  The order complex of such a poset corresponds to the independence complex of a so-called \emph{interval graph}, a graph whose vertices are given by intervals on the real line with adjacency given by \emph{intersecting} intervals.  One can see that interval graphs are chordal, and hence Theorem \ref{chordalVD} is a strengthening of the main result from \cite{BM}. 

\subsection{Ears and whiskers}

In \cite{FT} the authors identify some non-chordal graphs whose edge ideals are sequentially Cohen-Macaulay; perhaps the easiest example is the 5-cycle.  In addition, a general procedure which we call `adding an ear' is described which the authors suggest (according to some computer experiments) might produce (in general non-chordal) graphs which are sequentially Cohen-Macaulay.  We can use our methods to confirm this (Proposition \ref{earcycle}).  For this we will employ the following lemma, which gives us a general condition to establish when a graph is sequentially Cohen-Macaulay.

\begin{lemma}\label{SCMgraph}
Suppose $G$ is a graph with vertices $u$ and $v$ such that $N(u) \cup \{u\} \subseteq N(v) \cup \{v\}$ and such that the complexes ${\tt Ind}(G \backslash \{v\})$ and ${\tt Ind}\big(G \backslash (\{v\} \cup N(v))\big)$ are both vertex decomposable.  Then the complex $\Delta = {\tt Ind}(G)$ is vertex decomposable and hence $R_G$ is sequentially Cohen-Macaulay.
\end{lemma}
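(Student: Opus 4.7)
The plan is to apply Definition \ref{defVD} directly with the vertex $v$ as the decomposing vertex. By Lemma \ref{indcomplex} we already identify
\[\del_{{\tt Ind}(G)}(v) = {\tt Ind}(G \setminus \{v\}), \qquad \lk_{{\tt Ind}(G)}(v) = {\tt Ind}\bigl(G \setminus (\{v\} \cup N(v))\bigr),\]
and by hypothesis both of these complexes are vertex-decomposable. Thus the only remaining condition to verify is that every facet of $\del_{{\tt Ind}(G)}(v)$ is also a facet of ${\tt Ind}(G)$.

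Translated combinatorially, I need to show that every maximal independent set $\sigma$ of $G \setminus \{v\}$ remains a maximal independent set of $G$. Clearly $\sigma$ is still independent in $G$, so it is enough to show that $\sigma \cup \{v\}$ is not independent, i.e.\ that $\sigma$ already contains some neighbor of $v$. This is where the hypothesis $N(u) \cup \{u\} \subseteq N(v) \cup \{v\}$ enters. Assuming (as we may) that $u \neq v$, this containment forces $u \in N(v)$, and forces every neighbor of $u$ in $G \setminus \{v\}$ to lie in $N(v)$.

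The key step, then, is the following dichotomy for a maximal independent set $\sigma$ of $G \setminus \{v\}$: either $u \in \sigma$, in which case $u \in N(v) \cap \sigma$ and we are done; or $u \notin \sigma$, and then maximality of $\sigma$ in $G \setminus \{v\}$ gives a vertex $w \in \sigma$ with $w \in N(u)$, whence $w \in (N(v) \cup \{v\}) \setminus \{v\} = N(v)$. In both cases $\sigma$ meets $N(v)$, so $\sigma$ is indeed a facet of ${\tt Ind}(G)$.

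Putting these pieces together shows ${\tt Ind}(G)$ is vertex-decomposable via the vertex $v$, and the sequentially Cohen-Macaulay statement for $R_G$ then follows from the implication chain (vertex-decomposable $\Rightarrow$ shellable $\Rightarrow$ sequentially CM) recorded in Section~2 together with the combinatorial characterization of sequential Cohen-Macaulayness. I expect the main subtlety to be purely bookkeeping: carefully handling the edge case $u = v$ (which makes the hypothesis trivially true and reduces the conclusion to the assumed vertex-decomposability of the deletion and link) and making sure the ``$u \notin \sigma$'' case does not accidentally produce $w = v$, which is ruled out precisely because $\sigma \subseteq V(G) \setminus \{v\}$.
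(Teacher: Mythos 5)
Your proposal is correct and follows essentially the same route as the paper: decompose at $v$, identify the deletion and link via Lemma \ref{indcomplex}, and use the domination hypothesis to show every facet of $\del_{\Delta}(v)$ is a facet of $\Delta$. Your dichotomy on whether $u \in \sigma$ just spells out the step the paper compresses into ``$u \in \sigma$ since $N(u) \subseteq N(v)$,'' and your implicit assumption $u \neq v$ matches the paper's intent (the lemma would indeed fail otherwise, e.g.\ for $C_4$).
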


\begin{proof}
We verify the conditions given in Definition \ref{defVD}, with $v$ as our chosen vertex.  According to Lemma \ref{indcomplex} we are left to check that every facet of $\del_\Delta(v) = {\tt Ind}(G \backslash \{v\})$ is a facet of $\Delta$.  Let $\sigma$ be a facet of $\del_\Delta(v)$ and suppose by contradiction that $\sigma \cup \{v\}$ is a facet of $\Delta$.  Then $u \in \sigma$ since $N(u) \subseteq N(v)$.  But $u$ and $v$ are adjacent since $u \in N(v)$, and hence $u$ and $v$ cannot both be elements of $\sigma$.
\end{proof}

We can then use this lemma to prove the following result, first suggested in \cite{FT}.  If $G$ is a graph with some specified edge $e$ then \emph{adding an ear} to $G$ is by definition adding a disjoint 3-cycle to $G$ and identifying one of its edges with $e$ (see Figure 1).

\begin{proposition}\label{earcycle}
For any $r \geq 3$, let $\tilde C_r$ be the graph obtained by adding an ear to the $r$-cycle $C_r$.  Then the complex ${\tt Ind}(\tilde C_r)$ is vertex-decomposable and hence the graph $\tilde C_r$ is sequentially Cohen-Macaulay.
\end{proposition}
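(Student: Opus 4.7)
The plan is to apply Lemma \ref{SCMgraph} directly. Label the $r$-cycle so that $C_r$ has vertex set $\{1, 2, \ldots, r\}$ with consecutive edges $\{i, i+1\}$ (indices taken modulo $r$), and assume the ear is added at the edge $\{1, 2\}$, attaching a new vertex $w$ adjacent to both $1$ and $2$. I take $u = w$ and $v = 1$. The neighborhood hypothesis of Lemma \ref{SCMgraph} is immediate from
$N(w) \cup \{w\} = \{1, 2, w\} \subseteq \{1, 2, r, w\} = N(1) \cup \{1\}$,
using only that the two neighbors of the ear vertex $w$ are precisely the endpoints of the ear edge.

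The remaining task is to check that the two auxiliary complexes ${\tt Ind}(\tilde C_r \backslash \{1\})$ and ${\tt Ind}(\tilde C_r \backslash \{1, 2, r, w\})$ are vertex-decomposable. The first underlying graph $\tilde C_r \backslash \{1\}$ has vertex set $\{2, 3, \ldots, r, w\}$ and consists of the path on $\{2, 3, \ldots, r\}$ together with the pendant edge $\{2, w\}$; in particular it is a tree. The second underlying graph $\tilde C_r \backslash \{1, 2, r, w\}$ is the path on $\{3, 4, \ldots, r-1\}$, which degenerates to a single isolated vertex when $r = 4$ and to the empty graph when $r = 3$. Since trees are chordal, Theorem \ref{chordalVD} yields vertex-decomposability of both independence complexes; the degenerate small-$r$ cases produce independence complexes that are simplices and are handled directly by the base case of Definition \ref{defVD}.

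Applying Lemma \ref{SCMgraph} then shows that ${\tt Ind}(\tilde C_r)$ is vertex-decomposable, and the chain of implications recalled in Section 2 yields that $R_{\tilde C_r}$ is sequentially Cohen-Macaulay. There is no real obstacle to this argument once the framework of Lemma \ref{SCMgraph} is available; the only step requiring any thought is recognizing that the pair $(u, v) = (w, 1)$ witnesses the hypothesis, after which the auxiliary graphs come out to be trees and the conclusion follows from Theorem \ref{chordalVD}.
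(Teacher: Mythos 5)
Your proof is correct and follows essentially the same route as the paper: both apply Lemma \ref{SCMgraph} with the ear vertex as $u$ and one of its two neighbors on the original edge as $v$, then observe that the deletion and the deletion-of-closed-neighborhood graphs are chordal (trees/paths in your explicit description) so that Theorem \ref{chordalVD} supplies the required vertex-decomposability. Your version just spells out the vertex labels and the degenerate small-$r$ cases that the paper leaves implicit.
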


\begin{proof}
Take $x$ to be the vertex added to the $r$-cycle and $v$ to be one of its neighbors, and apply Lemma \ref{SCMgraph}.  Note that $G \backslash \{v\}$ and $G \backslash (\{v\} \cup N(v))$ are both chordal graphs and hence the associated independence complexes are vertex decomposable.
\end{proof}

\begin{center}
\epsfig{file=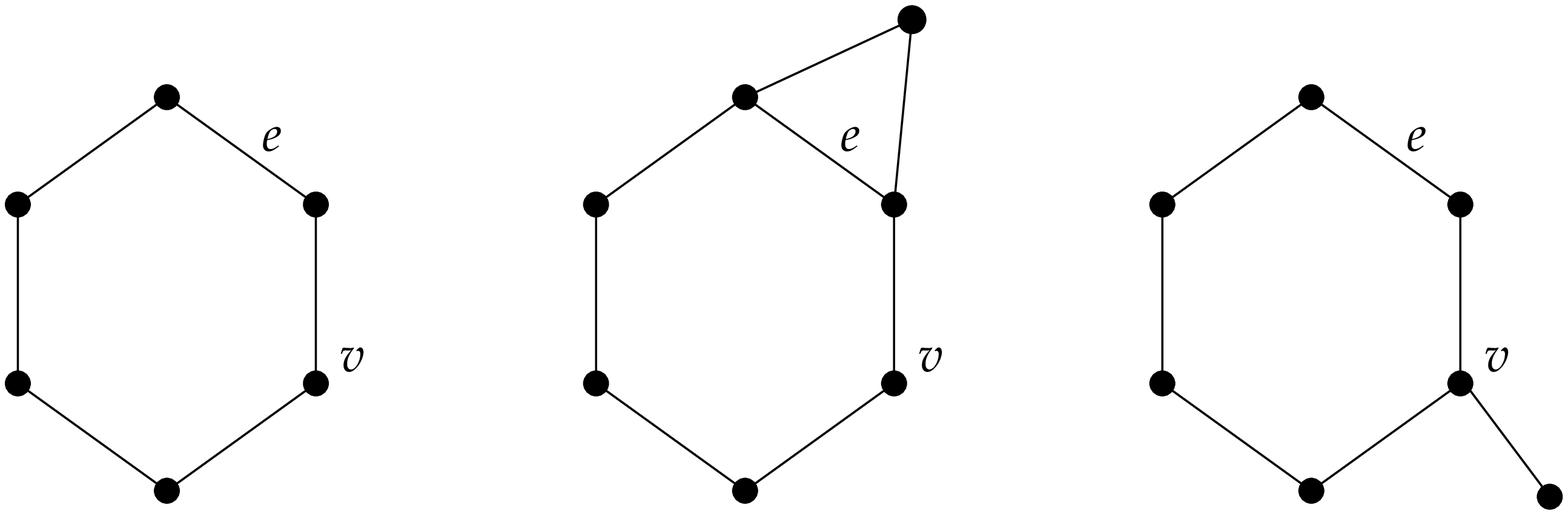, height=1.4 in, width = 4 in}

{Figure 1: Adding an ear at the edge $e$, adding a whisker at the vertex $v$.}
\end{center}

The idea of making small modifications to a graph in order to obtain a (sequentially) Cohen-Macaulay ideal is further explored in other papers.  In \cite{FH} and \cite{V} the authors investigate the notion of `adding a \emph{whisker}' to a vertex $v \in G$, which by definition means adding a new vertex $v^\prime$ and adding a single edge $(v,v^\prime)$; see Figure 1.  The following is a strengthening of one of the theorems of Villareal from \cite{V}  

\begin{theorem}\label{whiskerVD}
Suppose $G$ is a graph and let $G^\prime$ be the graph obtained by adding a whisker at every vertex $v \in G$.  Then the complex ${\tt Ind}(G^\prime)$ is pure and vertex-decomposable and hence the ideal $I_{G^\prime}$ is Cohen-Macaulay.
\end{theorem}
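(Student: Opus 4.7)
The plan is to verify the two conditions of Definition~\ref{defVD} for $\Delta = {\tt Ind}(G^\prime)$ by induction on $n = |V(G)|$, choosing any $v \in V(G)$ as the decomposition vertex (the base case $n=0$ giving the $(-1)$-simplex). Before the induction I would dispatch purity: if $F$ is a facet of $\Delta$, then $F$ is a maximal independent set of $G^\prime$, and for each $u \in V(G)$ the vertices $u$ and $u^\prime$ are adjacent so at most one of them lies in $F$; on the other hand $u^\prime$ has no other neighbor in $G^\prime$, so if $u \notin F$ then maximality forces $u^\prime \in F$. Hence exactly one of $\{u, u^\prime\}$ lies in $F$ for each $u$, so $|F| = n$ and $\Delta$ is pure of dimension $n-1$.

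For the inductive step, apply Lemma~\ref{indcomplex} using $N_{G^\prime}(v) = N_G(v) \cup \{v^\prime\}$. Removing $v$ from $G^\prime$ isolates $v^\prime$ and leaves $(G \setminus \{v\})^\prime$ on the remaining non-isolated vertices, while removing $\{v\} \cup N_{G^\prime}(v)$ leaves $H^\prime$, where $H := G \setminus (\{v\} \cup N_G(v))$, together with the now-isolated whiskers $\{u^\prime : u \in N_G(v)\}$. Translating to independence complexes,
\[
\del_\Delta(v) = {\tt Ind}\big((G \setminus \{v\})^\prime\big) * \{v^\prime\}, \qquad \lk_\Delta(v) = {\tt Ind}(H^\prime) * \sigma,
\]
where $\sigma$ is the full simplex on the isolated whiskers (possibly empty). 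By the inductive hypothesis ${\tt Ind}((G \setminus \{v\})^\prime)$ and ${\tt Ind}(H^\prime)$ are vertex-decomposable, and a short induction on the number of simplex vertices (picking a decomposition vertex from the vertex-decomposable factor, so that the facet condition is automatically inherited across the join) shows that joining a vertex-decomposable complex with a simplex yields a vertex-decomposable complex. Hence $\del_\Delta(v)$ and $\lk_\Delta(v)$ are both vertex-decomposable.

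Finally I would verify the facet condition. If $F$ is a facet of $\del_\Delta(v) = {\tt Ind}(G^\prime \setminus \{v\})$, the purity argument applied in $G^\prime \setminus \{v\}$ (where $v^\prime$ is isolated) forces $v^\prime \in F$; since $v v^\prime \in E(G^\prime)$ the set $F$ cannot be extended by $v$ in $G^\prime$, so $F$ is a facet of $\Delta$. This completes the verification of Definition~\ref{defVD}, so $\Delta$ is vertex-decomposable; combined with purity this yields pure shellability and hence Cohen-Macaulayness of $R_{G^\prime}$. The only nontrivial ingredient is the observation that joins with simplices preserve vertex-decomposability, needed precisely because deleting $v$ together with its $G$-neighbors strands their whiskers as isolated vertices in the link; everything else is a direct application of Lemma~\ref{indcomplex} and the whisker structure.
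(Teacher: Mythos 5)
Your proof is correct and follows essentially the same route as the paper: induction on $n$, shedding at a vertex $v\in V(G)$, with $\del_\Delta(v)$ a cone over ${\tt Ind}\big((G\setminus\{v\})^\prime\big)$ and $\lk_\Delta(v)$ an iterated cone over ${\tt Ind}\big((G\setminus(\{v\}\cup N(v)))^\prime\big)$. You are in fact more careful than the paper, which leaves implicit both the facet condition of Definition~\ref{defVD} and the fact that joining with a simplex preserves vertex-decomposability.
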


\begin{proof}
For convenience let $\Delta := {\tt Ind}(G^\prime)$.  If $G$ has $n$ vertices then every facet of $\Delta$ has $n$ vertices since in every maximal independent set we choose exactly one vertex from the set $\{v, v^\prime\}$.  To show that $\Delta$ is vertex-decomposable we use induction on $n$.  If $n = 1$ then ${\tt Ind}(G^\prime)$ is a pair of points and hence vertex-decomposable.  For $n > 1$ we choose some vertex $v \in G$ and observe that $\del_{\Delta}(v)$ is a cone over ${\tt Ind}\big((G \backslash \{v\})^\prime \big)$, which is vertex-decomposable by induction.  Similarly, $\lk_{\Delta}(v)$ is a (possibly iterated) cone over ${\tt Ind}\big(G \backslash (\{v\} \cup N(v))\big)$ and hence vertex-decomposable.
\end{proof} 

In \cite{FH}, Francisco and H\`{a} investigate the effect of adding whiskers to graphs in order to obtain \emph{sequentially} Cohen-Macaulay edge ideals.  One of the main results from that paper is the following.

\begin{theorem}[Francisco, H\`{a}]
Let $G$ be a graph and suppose $S \subseteq V(G)$ such that $G \backslash S$ is a chordal graph or a five-cycle.  Then $G \cup W(S)$ is sequentially Cohen-Macaulay.
\end{theorem}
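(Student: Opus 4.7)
The plan is to prove the stronger statement that ${\tt Ind}(G \cup W(S))$ is vertex-decomposable (whence sequentially Cohen-Macaulay), by induction on $|S|$. For the base case $|S|=0$ the graph is $G$ itself, assumed chordal or equal to $C_5$. The chordal case is Theorem~\ref{chordalVD}. For $G = C_5$, one checks directly that ${\tt Ind}(C_5)$ is itself a 5-cycle as a 1-dimensional simplicial complex (its edges are the five ``long chords''); picking any vertex $v$, the deletion is a path $P_4$ and the link is two isolated points, both vertex-decomposable, and the facet condition holds since the complex is pure.

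For the inductive step, fix $v \in S$ with whisker $v'$ in $G' := G \cup W(S)$ and apply Lemma~\ref{SCMgraph} with $u = v'$. The containment $N_{G'}(v') \cup \{v'\} = \{v,v'\} \subseteq N_{G'}(v) \cup \{v\}$ is automatic, so what remains is to verify vertex-decomposability of ${\tt Ind}(G' \backslash \{v\})$ and ${\tt Ind}\bigl(G' \backslash (\{v\} \cup N_{G'}(v))\bigr)$. In the first complex, $v'$ is isolated (its only neighbor was $v$), making it a cone $v' \ast {\tt Ind}\bigl((G \backslash \{v\}) \cup W(S \backslash \{v\})\bigr)$. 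Since $(G \backslash \{v\}) \backslash (S \backslash \{v\}) = G \backslash S$ is still chordal or $C_5$, and $|S \backslash \{v\}| < |S|$, the induction hypothesis gives vertex-decomposability of the base, which is preserved under coning.

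For the second complex, $N_{G'}(v) = N_G(v) \cup \{v'\}$, so removing $\{v,v'\} \cup N_G(v)$ leaves the graph $H \cup W(T)$ together with the now-isolated whiskers $\{u' : u \in S \cap N_G(v)\}$, where $H := G \backslash (\{v\} \cup N_G(v))$ and $T := S \backslash (\{v\} \cup N_G(v))$. Isolated vertices of the graph contribute (iterated) cone points to the independence complex, so it suffices to show that ${\tt Ind}(H \cup W(T))$ is vertex-decomposable. Now $H \backslash T = (G \backslash S) \backslash N_G(v)$ is an induced subgraph of $G \backslash S$; induced subgraphs of chordal graphs are chordal, while every proper induced subgraph of $C_5$ is a disjoint union of paths and hence chordal. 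Since $v \in S \cap (\{v\} \cup N_G(v))$, we have $|T| < |S|$, and the induction hypothesis applies.

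The main obstacle I anticipate is the careful bookkeeping of which whiskers become detached and isolated after the second vertex removal, together with the verification that both residual graphs $(G \backslash \{v\}) \backslash (S \backslash \{v\})$ and $(G \backslash S) \backslash N_G(v)$ continue to satisfy the chordal-or-$C_5$ hypothesis. Once these details are handled, Lemma~\ref{SCMgraph} delivers vertex-decomposability of ${\tt Ind}(G')$ in one stroke, and the sequentially Cohen-Macaulay conclusion of the theorem follows immediately.
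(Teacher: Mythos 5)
Your argument is not a variant of the paper's proof, because the paper does not prove this statement at all: it is quoted from \cite{FH}, and the authors explicitly remark just before stating it that they ``have not been able to find a new proof of this result using our methods'' (the original proof of Francisco and H\`a is algebraic, via Alexander duality and componentwise linearity). What you have written is therefore an answer to the question the paper leaves open, and it proves strictly more than the stated theorem, namely vertex-decomposability of ${\tt Ind}(G\cup W(S))$. I have checked the induction and it closes: the domination $N(v')\cup\{v'\}=\{v,v'\}\subseteq N(v)\cup\{v\}$ makes Lemma \ref{SCMgraph} applicable with $u=v'$; the deletion reduces to the pair $(G\setminus\{v\},\,S\setminus\{v\})$, whose quotient is still $G\setminus S$; the link reduces, after stripping the cone points contributed by the detached whisker tips $\{u':u\in S\cap N_G(v)\}$, to the pair $(H,T)$ with $H\setminus T=(G\setminus S)\setminus N_G(v)$, which is again chordal or $C_5$ (a proper induced subgraph of $C_5$ is a disjoint union of paths, hence chordal); and $|T|<|S|$ since $v\in S$ is removed. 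Two small points should be tightened. First, in the base case the justification ``the facet condition holds since the complex is pure'' is not quite right as stated: purity of $\Delta$ alone does not force every facet of $\del_\Delta(v)$ to be a facet of $\Delta$; what you actually need (and what holds for the $5$-cycle, where $\del_\Delta(v)$ is a path of the same dimension) is that $\del_\Delta(v)$ is pure of dimension $\dim\Delta$. Second, you invoke twice the fact that a cone over a vertex-decomposable complex is vertex-decomposable; this is true and standard (if $w$ is a shedding vertex of $\Delta$ then it is one of $x\ast\Delta$, with deletion and link the cones over $\del_\Delta(w)$ and $\lk_\Delta(w)$), but since it is not stated anywhere in the paper you should record it as a one-line lemma. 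With those repairs this is a complete and correct proof of the vertex-decomposable strengthening, obtained entirely from the paper's own tools (Theorem \ref{chordalVD} and Lemma \ref{SCMgraph}); this strengthened statement is indeed true and was later established in the literature by essentially the same shedding-vertex argument.
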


Although we have not been able to find a new proof of this result using our methods, the following other main result from \cite{FH} does fit nicely into our setup.

\begin{theorem}
Let $G$ be a graph and $S \subseteq G$ a subset of vertices.  If $G \backslash S$ is not sequentially Cohen-Macaulay then neither is $G \cup W(S)$.
\end{theorem}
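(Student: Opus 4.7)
The plan is to prove the contrapositive: if $\mathtt{Ind}(G \cup W(S))$ is sequentially Cohen-Macaulay, then so is $\mathtt{Ind}(G \setminus S)$. The strategy rests on the simple observation that $\mathtt{Ind}(G \setminus S)$ arises as a link inside $\mathtt{Ind}(G \cup W(S))$, combined with the fact that the sequentially CM property descends to links.

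For the link identification, let $W' := \{v^\prime : v \in S\}$ be the set of whisker tips. Each $v^\prime \in W'$ has exactly one neighbor, namely $v \in S$, so $W'$ is an independent set in $G \cup W(S)$ and its neighborhood is exactly $S$. Iterating Lemma \ref{indcomplex} one vertex of $W'$ at a time (using at each step that $\lk_{\lk_\Delta(u)}(w) = \lk_\Delta(\{u,w\})$), we obtain
\[\lk_{\mathtt{Ind}(G \cup W(S))}(W') \;=\; \mathtt{Ind}\bigl((G \cup W(S)) \setminus (W' \cup S)\bigr) \;=\; \mathtt{Ind}(G \setminus S),\]
since deleting $W'$ removes all the whisker tips and further deleting $S$ leaves the induced subgraph of $G$ on $V(G) \setminus S$.

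Next I would record the hereditary fact: if $\Delta$ is sequentially Cohen-Macaulay and $F \in \Delta$ is any face, then $\lk_\Delta(F)$ is sequentially CM. Indeed, for any face $\tau$ of $\lk_\Delta(F)$ one has $\lk_{\lk_\Delta(F)}(\tau) = \lk_\Delta(F \cup \tau)$, which is sequentially acyclic by hypothesis on $\Delta$; this verifies the defining property of sequentially CM for $\lk_\Delta(F)$.

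Combining the two ingredients: if $\mathtt{Ind}(G \cup W(S))$ were sequentially CM, then its link at the face $W'$, which is precisely $\mathtt{Ind}(G \setminus S)$, would be sequentially CM, contradicting the hypothesis. The argument is essentially a two-line observation once the link identification is in place; there is no real obstacle, only the mild bookkeeping needed to iterate Lemma \ref{indcomplex} over the vertices of $W'$.
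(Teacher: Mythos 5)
Your proposal is correct and follows essentially the same route as the paper: the whisker tips form a face $F$ of ${\tt Ind}(G\cup W(S))$ whose link is ${\tt Ind}(G\setminus S)$, and the link characterization of sequential Cohen--Macaulayness does the rest. If anything, your explicit use of $\lk_{\lk_\Delta(F)}(\tau)=\lk_\Delta(F\cup\tau)$ to pass from ``$G\setminus S$ not sequentially CM'' to ``some link of $\Delta$ not sequentially acyclic'' is slightly more careful than the paper's one-line conclusion.
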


\begin{proof}
According to the combinatorial definition of sequentially CM provided in Section 2.1, a complex $\Delta$ is sequentially CM if and only if the link $\lk_{\Delta}(F)$ is sequentially acyclic for every face $F \in \Delta$.  The `ends' of the whiskers in $G \cup W(S)$ form an independent set and hence determine a face $F$ in $\Delta := {\tt Ind}\big(G \cup W(S)\big)$.  From Lemma \ref{indcomplex} we have that $\lk_\Delta(F) = {\tt Ind}\big((G \cup W(S)\big)$, which is not sequentially acyclic as $G \backslash S$ is assumed not to be sequentially Cohen-Macaulay.
\end{proof}

\section{Projective dimension and max degree}

In this section we determine bounds on the projective dimension of $R_G$ given local
information regarding the graph $G$.  Recall that by Hochster's formula \ref{hochster} the
projective dimension of $R_G$ is the smallest integer $\ell$ such that
\[\dim_k \tilde{H}_{j-i-1}\big({\tt Ind}(G[W])\big)=0\]
for all $\ell < i \leq j$ and subsets $W$ of $V(G)$ with $j$ vertices.  Hence if we know something
about how the topological connectivity of ${\tt Ind}(G[W])$ depends on the size of $W$
we can bound the projective dimension.  Along these lines we have the following theorem.

\begin{theorem}\label{TheoremPDIM}
Let $\Delta$ be a simplicial complex with $n$ vertices, and suppose $a,b$ are real numbers with $a>0$. If
\[ \dim_k \tilde{H}_t(\Delta[W]) = 0 \]
for all integers $t \leq a |W|+b$ and $W\subseteq \Delta^0$ then the
projective dimension of $R_\Delta$ is at most $n(1-a)-b-1$.
\end{theorem}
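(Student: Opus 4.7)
The plan is to use Hochster's formula (Theorem~\ref{hochster}) to translate the hypothesis on the vanishing of reduced homology of induced subcomplexes directly into a bound on the indices $i$ for which graded Betti numbers can be nonzero. Recall that Hochster's formula reads
\[
\beta_{i,j}(\Delta) \;=\; \sum_{W \in \binom{\Delta^0}{j}} \dim_k \tilde H_{j-i-1}(\Delta[W]; k),
\]
so $\pdim R_\Delta$ is the largest $i$ for which there exist an integer $j$ and a subset $W \subseteq \Delta^0$ of cardinality $j$ such that $\tilde H_{j-i-1}(\Delta[W]; k) \neq 0$.

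First I would fix a triple $(i,j,W)$ that realizes this maximum, and set $t := j - i - 1$. The contrapositive of the hypothesis then forces $t > a|W| + b = aj + b$, for otherwise $\tilde H_t(\Delta[W];k)$ would be forced to vanish. A one-line rearrangement gives
\[
i \;<\; j(1-a) - b - 1.
\]
Since $j = |W| \leq n$, and in the relevant regime $a \leq 1$ (so that $j \mapsto j(1-a)$ is nondecreasing on $[0,n]$), the right-hand side is maximized at $j = n$, yielding $i < n(1-a) - b - 1$. Since this holds for the largest such $i$, we conclude $\pdim R_\Delta \leq n(1-a) - b - 1$, as desired.

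There is essentially no substantive obstacle here; once Hochster's formula is applied the proof is a one-line inequality manipulation. The two points requiring a moment of care are (i) the direction of the hypothesis, which asserts \emph{vanishing} below a threshold and must be used in its contrapositive form to extract $t > a|W|+b$, and (ii) the monotonicity step where we replace $j$ by $n$, which only works because $1 - a \geq 0$ (when $a > 1$ the asserted bound $n(1-a)-b-1$ is even smaller, and the argument still goes through via $j \leq n$ trivially forcing a very restrictive constraint). Otherwise the conclusion follows immediately.
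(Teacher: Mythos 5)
Your proof is correct and is essentially the paper's own argument: both apply Hochster's formula and reduce to the single inequality $j-i-1 \leq aj+b$ versus $i \geq j(1-a)-b-1$, yours merely phrased contrapositively. Your observation that the step $j(1-a) \leq n(1-a)$ requires $a \leq 1$ is a fair one, but note it applies equally to the paper's proof (which uses $j - n(1-a) \leq j - j(1-a)$ without comment), and in all of the paper's applications one has $0 < a < 1$.
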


\begin{proof}
By Hochster's formula it is enough to show that 
\[ \dim_k \tilde{H}_{j-i-1}\big({\tt Ind}(\Delta[W])\big)=0 \]
for all $j$--subsets $W$ of $\Delta^0$ and $i\geq n(1-a)-b-1$.
By assumption we have that $\dim \tilde{H}_{j-i-1}(\Delta[W]) = 0$
for all $j-i-1\leq aj + b$, and since
\[j-i-1\leq j-(n(1-a)-b-1)-1=j-n(1-a)+b \leq j-j(1-a)+b = aj+b \]
we are done.
\end{proof}

We next apply this theorem to obtain information regarding the projective dimension of various classes of graphs for which we have some information regarding the connectivity of the associated independence complexes.

\begin{corollary}\label{enkeltLemma}
Let $G$ be a graph with $n$ vertices and suppose the maximum degree of $G$ is $d\geq 1$.  Then
the projective dimension of $R_G$ is at most $n\left(1-\frac{1}{2d}\right)+\frac{1}{2d}$.
\end{corollary}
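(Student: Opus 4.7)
The plan is to apply Theorem~\ref{TheoremPDIM} to $\Delta = {\tt Ind}(G)$ with carefully chosen constants $a,b$. Matching the target bound $n\bigl(1-\tfrac{1}{2d}\bigr) + \tfrac{1}{2d}$ against the theorem's conclusion $n(1-a)-b-1$ forces the choice $a = \tfrac{1}{2d}$ and $b = -1 - \tfrac{1}{2d}$. With these constants the hypothesis of Theorem~\ref{TheoremPDIM} reads: for every $W \subseteq V(G)$,
\[
\tilde H_t\bigl({\tt Ind}(G[W]);k\bigr) = 0 \quad \text{whenever} \quad t \leq \frac{|W|-1}{2d}-1,
\]
equivalently, ${\tt Ind}(G[W])$ is at least $\bigl(\tfrac{|W|-1}{2d}-1\bigr)$-connected.

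The crucial input is thus a Meshulam-type connectivity estimate for independence complexes of bounded-degree graphs: if $H$ has $m$ vertices and maximum degree at most $d$, then ${\tt Ind}(H)$ is at least $\bigl(\tfrac{m-1}{2d}-1\bigr)$-connected. This estimate is well known and available in \cite{E1}. Since every induced subgraph $G[W]$ inherits the degree bound $d$ from $G$, the estimate applies to all $G[W]$ simultaneously, and substitution into Theorem~\ref{TheoremPDIM} yields
\[
\pdim R_G \leq n(1-a)-b-1 = n\Bigl(1 - \frac{1}{2d}\Bigr) + \frac{1}{2d}.
\]

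The only real work would lie in the connectivity estimate itself, which is typically proved by induction on $m$: pick any vertex $v$ of $H$ and use the cover ${\tt Ind}(H) = \mathrm{st}(v) \cup \del_{{\tt Ind}(H)}(v)$, whose intersection equals $\lk_{{\tt Ind}(H)}(v) = {\tt Ind}(H \setminus N[v])$ by Lemma~\ref{indcomplex}. A Mayer--Vietoris argument then propagates connectivity from the two smaller complexes ${\tt Ind}(H \setminus \{v\})$ (on $m-1$ vertices) and ${\tt Ind}(H \setminus N[v])$ (on at least $m-d-1$ vertices) back to ${\tt Ind}(H)$. The arithmetic obstacle to overcome is that removing the closed neighborhood $N[v]$ shrinks the vertex count by up to $d+1$ while lowering the target connectivity by only $1$; balancing these two drops is exactly what produces the factor $\tfrac{1}{2d}$, and hence the exponent in the final bound. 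Given this estimate, the proof of Corollary~\ref{enkeltLemma} amounts to little more than bookkeeping inside Theorem~\ref{TheoremPDIM}.
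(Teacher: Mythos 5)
Your proof is correct and follows essentially the same route as the paper: apply Theorem~\ref{TheoremPDIM} with $a=\tfrac{1}{2d}$, $b=-1-\tfrac{1}{2d}$, feeding in the known connectivity bound $t\leq \tfrac{m-1}{2d}-1$ for independence complexes of graphs with maximum degree $d$ (which the paper attributes to \cite{AH} and \cite{M1} rather than \cite{E1}, the latter being the source of the sharper claw-free bound). The closing Mayer--Vietoris sketch is a reasonable outline of how that estimate is obtained, but like the paper you are ultimately quoting it rather than proving it, so nothing further is needed.
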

\begin{proof}
If $H$ is a graph with $n$ vertices and maximum degree $d$ we have from \cite{AH} and \cite{M1} that 
 \[\dim_k \tilde{H}_t ({\tt Ind}(H)) =0\]
for all  $t\leq \frac{n-1}{2d} -1$.  We then apply Theorem \ref{TheoremPDIM} with $a=\frac{1}{2d}$ and $b=-1-\frac{1}{2d}$.
\end{proof}

In \cite{ST} Szab\'{o} and Tardos showed that the connectivity bounds from \cite{AH} and \cite{M1} 
on independence complexes are optimal.  Their example, the independence complex of
several complete bipartite graphs of the same order, also shows that the bound
on the projective dimension in Corollary~\ref{enkeltLemma} is optimal.  We point out that one can also explicitly calculate the projective dimension of the edge ideals of these graphs by applying the methods outlined below in Section 6.

Recall that a graph is said to be \emph{claw-free} if no vertex has three pairwise nonadjacent neighbors.  Although it may seem like a somewhat artificial property, a graph that is claw-free quite often enjoys some nice properties (see \cite{claw1,claw2}).  For such graphs we can deduce the following property regarding their edge ideals.

\begin{corollary}
Let $G$ be a claw-free graph with $n$ vertices and suppose that the maximum degree of $G$ is $d\geq 1$.  Then
the projective dimension of $R_G$ is at most $n\left(1-\frac{2}{3d+2}\right)+\frac{2}{3d+2}$.
\end{corollary}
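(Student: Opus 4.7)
The plan is to mimic the proof of Corollary~\ref{enkeltLemma} exactly, substituting an improved connectivity bound for the independence complex that exploits the claw-free hypothesis. The general result used for the non-claw-free case, due to Aharoni--Haxell and Meshulam, gives only $\tilde H_t({\tt Ind}(H)) = 0$ for $t \leq \frac{n-1}{2d} - 1$. For claw-free graphs, one expects a stronger bound because the neighborhood of each vertex can only contain pairs of non-adjacent vertices forming a matching (no three pairwise nonadjacent neighbors), so the local combinatorics is tighter and forces the independence complex to be more highly connected.

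Concretely, I would invoke the known fact that for a claw-free graph $H$ on $n$ vertices with maximum degree $d$,
\[ \dim_k \tilde H_t({\tt Ind}(H)) = 0 \qquad \text{for all } t \leq \tfrac{2(n-1)}{3d+2} - 1. \]
Granting this, the rest is a direct application of Theorem~\ref{TheoremPDIM}. I would set $a = \tfrac{2}{3d+2}$ and $b = -1 - \tfrac{2}{3d+2}$, so that for any induced subgraph $H = G[W]$ (which is still claw-free with maximum degree at most $d$) the hypothesis $\tilde H_t({\tt Ind}(H)) = 0$ for $t \leq a|W| + b$ is exactly the connectivity bound above applied to $W$. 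Theorem~\ref{TheoremPDIM} then yields
\[ \pdim R_G \leq n(1-a) - b - 1 = n\!\left(1 - \tfrac{2}{3d+2}\right) + \tfrac{2}{3d+2}, \]
matching the desired inequality.

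The one substantive step is securing the connectivity bound $\tfrac{2(n-1)}{3d+2} - 1$ for claw-free graphs; the rest is bookkeeping with the constants in Theorem~\ref{TheoremPDIM}. This is the main obstacle in the sense that the inequality does not follow from the Aharoni--Haxell--Meshulam estimate alone, and one needs a claw-free-specific topological lemma (of the kind established by Engstr\"om and others who have studied independence complexes of claw-free graphs). A subtle point worth checking is that induced subgraphs of a claw-free graph remain claw-free with maximum degree bounded by $d$, so that the connectivity estimate applies uniformly to every $G[W]$, which is what Hochster's formula via Theorem~\ref{TheoremPDIM} requires; this is straightforward since induced subgraphs cannot increase degrees or create new claws.
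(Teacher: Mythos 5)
Your proposal is correct and follows essentially the same route as the paper: the paper also applies Theorem~\ref{TheoremPDIM} with $a=\frac{2}{3d+2}$ and $b=-1-\frac{2}{3d+2}$, using the connectivity bound $\tilde H_t({\tt Ind}(H))=0$ for $t\leq \frac{2n-1}{3d+2}-1$ for claw-free graphs, which is exactly the claw-free-specific lemma you anticipated (it is Engstr\"om's result cited as \cite{E1}, and it implies the slightly weaker form $t\leq \frac{2(n-1)}{3d+2}-1$ that your choice of $a,b$ requires).
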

\begin{proof}
It $H$ is a graph with $n$ vertices and maximum degree $d$ we have from \cite{E1} that 
 \[\dim_k \tilde{H}_t ({\tt Ind}(H)) =0\]
for all $t\leq \frac{2n-1}{3d+2} -1$.  We then apply Theorem \ref{TheoremPDIM} with $a=\frac{2}{3d+2}$ and $b=-1-\frac{2}{3d+2}$.
\end{proof}

Finite subsets of the $\mathbb{Z}^2$ lattice constitute another class of graphs for which we have good connectivity bounds on the associated independence complexes.  We can then apply our setup to obtain the following.

\begin{corollary}\label{corPlane}
Let $G$ be a finite subgraph of the $\mathbb{Z}^2$ lattice with $n$ vertices.  Then the projective dimension of $R_G$ is at most $\frac{5n}{6}+\frac{1}{2}$.
\end{corollary}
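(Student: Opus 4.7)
The plan is to derive this corollary by combining Theorem \ref{TheoremPDIM} with a known connectivity bound for independence complexes of subgraphs of the $\mathbb{Z}^2$ lattice, in direct analogy with the proofs of Corollary \ref{enkeltLemma} and the claw-free corollary just above it. Specifically, I aim to apply Theorem \ref{TheoremPDIM} with $a = \tfrac{1}{6}$ and $b = -\tfrac{3}{2}$, since a quick check gives
\[
n(1-a) - b - 1 \;=\; n \cdot \tfrac{5}{6} + \tfrac{3}{2} - 1 \;=\; \tfrac{5n}{6} + \tfrac{1}{2},
\]
which is exactly the claimed bound.

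To invoke Theorem \ref{TheoremPDIM}, I first need to verify that the hypothesis holds uniformly: for every induced subgraph $H = G[W]$ of $G$ one has
\[
\dim_k \tilde H_t({\tt Ind}(H)) \;=\; 0 \quad \text{for all } t \leq \tfrac{|W|}{6} - \tfrac{3}{2}.
\]
The key observation is that any induced subgraph of a subgraph of the $\mathbb{Z}^2$ lattice is itself a subgraph of the $\mathbb{Z}^2$ lattice, so it suffices to have a single connectivity estimate of the form ``${\tt Ind}(H)$ is at least $(\lceil |V(H)|/6 \rceil - 2)$-connected'' for every finite subgraph $H$ of $\mathbb{Z}^2$. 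Such a bound is known and follows from the general connectivity machinery for independence complexes applied to the special geometric structure of planar lattice graphs (in the same spirit as the bounds used from \cite{AH}, \cite{M1}, and \cite{E1} in the earlier corollaries); the $\tfrac{1}{6}$ coefficient reflects the fact that a vertex together with its closed neighborhood in $\mathbb{Z}^2$ consumes up to $6$ vertices' worth of connectivity when performing the standard link/deletion recursion on independence complexes.

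Once the connectivity bound is in hand, the argument is then a direct substitution into Theorem \ref{TheoremPDIM}: for any $W \subseteq V(G)$ with $|W| = j$ and any $i \geq \tfrac{5n}{6} + \tfrac{1}{2}$, the inequality
\[
j - i - 1 \;\leq\; j - \bigl(\tfrac{5n}{6} + \tfrac{1}{2}\bigr) - 1 \;\leq\; \tfrac{j}{6} - \tfrac{3}{2}
\]
holds (using $j \leq n$), so the hypothesis of Theorem \ref{TheoremPDIM} forces $\dim_k \tilde H_{j-i-1}({\tt Ind}(G[W])) = 0$, and Hochster's formula then yields $\beta_{i,j}(R_G) = 0$. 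Hence $\pdim R_G \leq \tfrac{5n}{6} + \tfrac{1}{2}$.

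The main obstacle is really just the first step: producing (or citing) the connectivity bound with slope $\tfrac{1}{6}$ for independence complexes of $\mathbb{Z}^2$-subgraphs. The rest is formal bookkeeping with the constants $a$ and $b$. If the literature bound is stated with a slightly different intercept, the corresponding adjustment will appear only in the additive constant $\tfrac{1}{2}$ via the choice of $b$; the slope $\tfrac{5}{6}$ in $n$ is rigidly dictated by the coefficient $a = \tfrac{1}{6}$ of the connectivity estimate.
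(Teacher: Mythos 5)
Your proposal is correct and follows exactly the paper's argument: the paper likewise applies Theorem \ref{TheoremPDIM} with $a=\tfrac{1}{6}$ and $b=-\tfrac{3}{2}$, taking the required connectivity bound ($t$-connectivity of ${\tt Ind}(H)$ for all $t\leq \tfrac{m}{6}-\tfrac{3}{2}$, where $m=|V(H)|$) as a citation rather than proving it. The precise source you are gesturing at is Proposition 4.3 of \cite{E2}; with that reference in hand your argument is complete.
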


\begin{proof}
From Proposition 4.3 of \cite{E2} we have that the independence complex of a 
finite subgraph of the  $\mathbb{Z}^2$ lattice with $m$ vertices
is $t$--connected for all $t\leq \frac{m}{6}-\frac{3}{2}$.  Hence to get the result we once again employ Theorem \ref{TheoremPDIM} with $a=\frac{1}{6}$ and $b=-\frac{3}{2}$.
\end{proof}

In \cite{E2} the homotopy types of the independence complexes
of disjoint stars with four edges are determined.  One can
use this to show that the constant $ \frac{5}{6}$ in 
Corollary~\ref{corPlane} cannot be decreased to more than $\frac{4}{5}$.

There are more general bounds on the connectivity of independence
complexes, many of them surveyed in \cite{ABM}, but it is not clear to us if they can readily be used to bound the projective dimension of
edge ideals.

We can also apply Theorem \ref{TheoremPDIM} to ideals that are somewhat more general than edge ideals of graphs.  For this we note that an independent set of a graph $G$ is a collection of vertices  with no connected component of size larger than one. The
Stanley-Reisner ideal $I_G$ is generated by the edges of a graph,
or equivalently, by the connected components of size two.
We generalize the edge ideal to the \emph{component ideal}, defined as follows.
\begin{definition}
Let $G$ be a graph with vertex set $[n]$. Then the
\emph{$r$--component ideal} of $G$ is
\[I_{G;r}=< x_{i_1}x_{i_2}\cdots x_{i_r} | i_1<i_2\cdots <i_r\textrm{\emph{ and }} G[\{i_1,i_2,\ldots i_r\}] \textrm{\emph{ is connected}} > \]
\end{definition}
Note that $I_{G;2}$ is the ordinary edge ideal. The
component ideals are Stanley-Reisner ideals of simplicial
complexes that were defined by Szab\'o and Tardos \cite{ST}.
In their notation, the Stanley-Reisner ideal of $\mathcal{K}_{r-1}$
is $I_{G;r}$. Corollary 2.9 of their paper states that:
\begin{lemma}[Szab\'o, Tardos] \label{LemmaST}
Let $t\geq 0$ and $r \geq 1$ be arbitrary integers. If $G$
is a graph with more than $t(d-1+(d+1)/r)$ vertices and with 
maximum degree $d\geq r-1$, then $\mathcal{K}_r(G)$ is
$(t-1)$--connected.
\end{lemma}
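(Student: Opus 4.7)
The plan is to prove this by induction on a suitable parameter such as $t$ (or lexicographically on $(t, |V(G)|)$). The base case $t = 0$ asks only that $\mathcal{K}_r(G)$ be $(-1)$-connected, i.e.\ nonempty; since any singleton has no connected $(r{+}1)$-subset, every singleton is a face, so the complex is nonempty as soon as $V(G) \neq \emptyset$, which is guaranteed by the hypothesis $|V(G)| > 0$.

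For the inductive step I would fix a vertex $v \in V(G)$ and write $\mathcal{K}_r(G)$ as the union of the deletion $\del_v(\mathcal{K}_r(G))$ and the (contractible) closed star at $v$, with intersection equal to the link $\lk_v(\mathcal{K}_r(G))$. The deletion is manifestly $\mathcal{K}_r(G\setminus\{v\})$. The link consists of those $F \subseteq V(G) \setminus \{v\}$ lying in $\mathcal{K}_r(G \setminus v)$ such that in addition no $r$-subset $F' \subseteq F$ makes $G[F' \cup \{v\}]$ connected; a short combinatorial analysis identifies this link with a component complex of a suitably modified graph obtained by deleting $v$ together with a carefully chosen set of vertices drawn from $N(v)$ and their close neighborhood. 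A standard Mayer--Vietoris patching then shows that if $\del_v(\mathcal{K}_r(G))$ is $(t{-}1)$-connected and $\lk_v(\mathcal{K}_r(G))$ is $(t{-}2)$-connected, so is $\mathcal{K}_r(G)$, and one invokes the inductive hypothesis on each of the two smaller graphs, noting that the max-degree condition $d\geq r-1$ persists on induced subgraphs.

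The main obstacle I expect is matching the sharp constant $d - 1 + (d+1)/r$. When $r = 1$ the formula recovers $2d$, which is the classical Meshulam--Aharoni--Berger--Ziv bound that falls out of a crude deletion-link patching (delete $v$ and all of $N[v]$, losing $d+1$ vertices per unit drop in $t$). To sharpen the coefficient from $d+1$ down to $d - 1 + (d+1)/r$ one must exploit the fact that the link is not merely $\mathcal{K}_r(G \setminus N[v])$ but retains additional faces built from configurations around $v$ that stop short of forming a connected $(r{+}1)$-set through $v$. Amortizing this gain so that the full $d + 1$ vertex drop is paid only once every $r$ inductive steps (while the intermediate steps cost roughly $d - 1$ vertices apiece) is, I suspect, the technical heart of the Szabó--Tardos argument and the point at which the $1/r$ savings in the coefficient materializes; carrying out this amortization cleanly, rather than as a case-by-case accounting, would be where I'd expect most of the work to go.
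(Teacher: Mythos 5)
First, a point of order: the paper does not prove this statement at all --- it is quoted as Corollary 2.9 of Szab\'o and Tardos \cite{ST} --- so there is no internal proof to compare your attempt against. Judged on its own terms, your proposal has a genuine gap exactly where you locate it yourself. The entire content of the lemma is the constant $d-1+(d+1)/r$: the crude deletion/link induction you describe (delete $v$ together with $N(v)$, paying $d+1$ vertices per unit of connectivity gained) only yields the bound $t(d+1)$, which is no stronger than the $r=1$ case and gives nothing new for $r\geq 2$. You acknowledge that the $1/r$ savings must come from an amortization spread over $r$ consecutive steps (total cost $r(d-1)+(d+1)$ rather than $r(d+1)$) and that you have not carried it out. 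That amortization is not a bookkeeping refinement of your outline; it \emph{is} the theorem, and a sketch ending with ``this is where I'd expect most of the work to go'' has not established the statement.

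Second, the structural step you pass over with ``a short combinatorial analysis'' is also not available in the form you assume. For $r\geq 2$ the link of $v$ in $\mathcal{K}_r(G)$ is not $\mathcal{K}_r(H)$ for any induced subgraph $H$: a face of the link is a set $F$ for which the component of $v$ in $G[F\cup\{v\}]$ has at most $r$ vertices, so $F$ may still contain vertices of $N(v)$, just not too many in a connected configuration through $v$. This is precisely the feature that distinguishes the component complexes $\mathcal{K}_r$ from independence complexes, and it is the reason the inductive hypothesis cannot simply be invoked on ``a suitably modified graph'' without a precise description of that modification and a verification that its vertex count and maximum degree satisfy the hypotheses with the right constants. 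To repair the sketch you would need, at minimum, (i) an exact identification of the link (or of a sequence of intermediate subcomplexes interpolating between the deletion and the full complex) as objects to which induction applies, and (ii) the quantitative accounting that produces $d-1+(d+1)/r$. Neither is supplied, so the proposal should be regarded as a plausible strategy statement rather than a proof; for the actual argument one must consult \cite{ST}.
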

Applying this Lemma we obtain another corollary of Theorem~\ref{TheoremPDIM}.
\begin{corollary}\label{mindreEnkeltLemma}
Let $G$ be a graph with $n$ vertices and suppose the maximum degree of $G$ is $d\geq 1$.  Then for $r\geq 2$ the projective dimension of $S/I_{G;r}$ is at most 
\[n\left(1-\frac{1}{d-1+\frac{d+1}{r-1}}\right)+1+\frac{1}{d-1+\frac{d+1}{r-1}}.\]
\end{corollary}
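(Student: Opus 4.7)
The strategy is to feed the Szab\'o--Tardos connectivity bound of Lemma \ref{LemmaST} into the machinery of Theorem \ref{TheoremPDIM}, exactly as in the proofs of Corollaries \ref{enkeltLemma} and \ref{corPlane}. The complex $\Delta$ whose Stanley--Reisner ideal is $I_{G;r}$ is, by definition, $\mathcal{K}_{r-1}(G)$, so it suffices to control the reduced homology of its induced subcomplexes.

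First I would verify that for every $W\subseteq V(G)$ we have $\mathcal{K}_{r-1}(G)[W] = \mathcal{K}_{r-1}(G[W])$, and that $G[W]$ still has maximum degree at most $d$; both are immediate from the definitions (a subset $\sigma\subseteq W$ fails to be a face of either complex precisely when it contains an $r$-subset inducing a connected subgraph, and connectedness of an induced subgraph on $\sigma$ is the same computed in $G$ or in $G[W]$). Applying Lemma \ref{LemmaST} to $G[W]$ with the lemma's parameter $r$ replaced by $r-1$ (the hypothesis $d\ge r-2$ may be assumed, since otherwise $G$ contains no connected $r$-subset, forcing $I_{G;r}=0$ and making the bound vacuous), and writing
\[
C \;:=\; d-1+\frac{d+1}{r-1},
\]
one concludes that $\mathcal{K}_{r-1}(G[W])$ is $(t-1)$-connected whenever $|W|>tC$. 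Setting $t=s+1$, this gives $\tilde H_s(\Delta[W];k)=0$ for every integer $s$ with $(s+1)C<|W|$.

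To put this into the precise shape demanded by Theorem \ref{TheoremPDIM}, take $a := 1/C$ and $b := -1-\frac{1}{C}$ (adjusted slightly if a strict-inequality margin is needed to absorb integer round-off). Then $s\le a|W|+b$ implies $(s+1)C \le |W|-1<|W|$, which is exactly the Szab\'o--Tardos hypothesis just verified. Plugging $a$ and $b$ into the conclusion $n(1-a)-b-1$ of Theorem \ref{TheoremPDIM} and simplifying yields the claimed bound
\[
\pdim\!\bigl(S/I_{G;r}\bigr)\;\le\; n\!\left(1-\frac{1}{C}\right)+1+\frac{1}{C}
\;=\; n\!\left(1-\frac{1}{d-1+\frac{d+1}{r-1}}\right)+1+\frac{1}{d-1+\frac{d+1}{r-1}}.
\]

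The main potential obstacle is nothing substantive: just the careful bookkeeping of strict versus non-strict inequalities when converting Szab\'o--Tardos's ``$(t-1)$-connected for $|W|>tC$'' into the form ``$\tilde H_t(\Delta[W])=0$ for all integers $t\le a|W|+b$'' required by Theorem \ref{TheoremPDIM}. Once the right choice of constants $a,b$ is recorded, the result follows as a direct corollary of the general machinery already assembled in the section.
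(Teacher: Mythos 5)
Your proof is the paper's proof: restrict Lemma \ref{LemmaST} to induced subgraphs, reformulate it with $r$ replaced by $r-1$ so that $\mathcal{K}_{r-1}(G[W])$ is $t$-connected for all integers $t\leq \frac{|W|-1}{C}-1$, and feed $a=1/C$, $b=-1-1/C$ into Theorem \ref{TheoremPDIM}. The bookkeeping of strict versus non-strict inequalities is handled correctly, and the observation that $\mathcal{K}_{r-1}(G)[W]=\mathcal{K}_{r-1}(G[W])$ is a point the paper leaves implicit, so the main line of the argument is sound.

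Two slips are worth flagging. First, your dismissal of the case $d<r-2$ is wrong: you claim that then ``$G$ contains no connected $r$-subset, forcing $I_{G;r}=0$,'' but a path on $r\geq 5$ vertices is connected and has maximum degree $2<r-2$, so $I_{G;r}\neq 0$ there. The degree hypothesis in Szab\'o--Tardos cannot be waved away this cheaply (the paper's own reformulation silently drops it as well), so if you want to invoke the lemma outside the range $d\geq r-2$ you need a different argument for that case. Second, the final arithmetic is off by one: with $a=1/C$ and $b=-1-1/C$ the conclusion of Theorem \ref{TheoremPDIM} is $n(1-a)-b-1=n\left(1-\frac{1}{C}\right)+\frac{1}{C}$, not $n\left(1-\frac{1}{C}\right)+1+\frac{1}{C}$. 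Since the bound you actually derive is \emph{stronger} than the one stated in the corollary, the statement still follows; indeed the extra $+1$ in the printed corollary appears to be a typo, as otherwise setting $r=2$ would not recover Corollary \ref{enkeltLemma} as the paper claims. But as written your ``simplification'' asserts a false identity rather than noting that the stronger bound implies the weaker one.
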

\begin{proof}
We can reformulate Lemma~\ref{LemmaST} as: If $H$ is a graph with $m$ vertices
and maximal degree at most $d$, then for any integer 
\[t\leq \frac{m-1}{d-1+\frac{d+1}{r-1}}-1 \]
the complex $\mathcal{K}_{r-1}(H)$ is $t$--connected. We now use
\[a= \frac{1}{d-1+\frac{d+1}{r-1}} \textrm{ and } b= -1-\frac{1}{d-1+\frac{d+1}{r-1}} \]
and apply Theorem \ref{TheoremPDIM}.
\end{proof}
\noindent
Note that if we take $r=2$ in Corollary~\ref{mindreEnkeltLemma} we do, as expected, recover Corollary~\ref{enkeltLemma}.

The proof of Corollary~\ref{enkeltLemma} and Corollary~\ref{mindreEnkeltLemma} builds on connectivity theorems from 
\cite{AH} and \cite{ST} using ruined triangulations. The method of ruined triangulations is more discrete geometry
than topology, and a natural question to ask is whether it is possible to prove these corollaries directly, without appealing to Hochster's formula.  We have already used the concept of vertex decomposable simplicial complexes several times in
this paper. As was hinted at earlier, if one assumes that the simplicial complex in question is also pure one obtains stronger properties regarding the Stanley-Reisner ring.  For example if $\Delta$ is vertex-decomposable and pure, then it is shellable and pure, and hence also Cohen-Macaulay.  In \cite{H} Hibi showed that the projective dimension of $S/I_\Delta$ is the smallest $k$ such that the $k$--skeleton $\Delta^{\leq k}$ is Cohen-Macaulay.  In \cite{Z2} Ziegler showed that certain skeletons of chessboard complexes are shellable, and we will follow his strategy to show that in fact they are pure vertex decomposable.  With the result of Hibi this leads to another proof of Corollary~\ref{enkeltLemma}.

In the context of independence complexes, Lemma 1.2 of \cite{Z2} states the following.
\begin{lemma}[Ziegler]~\label{zigge}
Let $G$ be a graph with an isolated vertex $v$. If ${\tt Ind}(G\setminus \{v\})^{\leq k}$ is pure vertex
decomposable then ${\tt Ind}(G)^{\leq k+1}$ is pure vertex decomposable.
\end{lemma}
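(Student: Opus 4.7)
The plan is to shed a vertex for $\Delta := {\tt Ind}(G)^{\leq k+1}$ by reusing a shedding vertex for ${\tt Ind}(G\setminus\{v\})^{\leq k}$ and to argue by induction on $|V(G)|$.

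First I would establish that $\Delta$ is pure of dimension $k+1$. Since $v$ is isolated, every maximal independent set of $G$ contains $v$ and has the form $\{v\}\cup\sigma$ with $\sigma$ a maximal independent set of $G\setminus\{v\}$. The pure hypothesis on ${\tt Ind}(G\setminus\{v\})^{\leq k}$ forces each such $\sigma$ to have size at least $k+1$, so every maximal independent set of $G$ has size at least $k+2$. Consequently every facet of the $(k+1)$-skeleton $\Delta$ has size exactly $k+2$.

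For the vertex-decomposition, I would induct on $|V(G)|$, noting that the statement of the lemma is a single universal assertion over $k$, so the inductive hypothesis is available for any parameter $k'$ at graphs smaller than $G$. The base case is when ${\tt Ind}(G\setminus\{v\})^{\leq k}$ is already a simplex: then $G$ is edgeless on $k+2$ vertices and $\Delta$ is itself a $(k+1)$-simplex. Otherwise let $u$ be a shedding vertex for ${\tt Ind}(G\setminus\{v\})^{\leq k}$. A direct computation from Lemma~\ref{indcomplex} together with the definition of the skeleton yields
\[ \del_\Delta(u)={\tt Ind}(G\setminus\{u\})^{\leq k+1},\qquad \lk_\Delta(u)={\tt Ind}\bigl(G\setminus(\{u\}\cup N(u))\bigr)^{\leq k}. \]
Both graphs on the right still have $v$ as an isolated vertex and strictly fewer total vertices than $G$. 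Moreover, $\del_{{\tt Ind}(G\setminus\{v\})^{\leq k}}(u)={\tt Ind}(G\setminus\{u,v\})^{\leq k}$ and $\lk_{{\tt Ind}(G\setminus\{v\})^{\leq k}}(u)={\tt Ind}\bigl(G\setminus(\{u,v\}\cup N(u))\bigr)^{\leq k-1}$ are pure vertex decomposable by the shedding property of $u$; these are precisely the pure inputs the inductive statement demands, applied with parameters $k$ and $k-1$ to the smaller graphs $G\setminus\{u\}$ and $G\setminus(\{u\}\cup N(u))$. Hence $\del_\Delta(u)$ and $\lk_\Delta(u)$ are pure vertex decomposable.

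It remains to verify the shedding condition that every facet of $\del_\Delta(u)$ is a facet of $\Delta$. By the purity obtained inductively for $\del_\Delta(u)$, its facets have size exactly $k+2$, and adjoining $u$ gives a set of size $k+3$ which cannot lie in the $(k+1)$-skeleton $\Delta$. So those facets are already maximal in $\Delta$, completing the verification of Definition~\ref{defVD} with $u$ as shedding vertex. The main subtlety to handle carefully is the skeleton bookkeeping in the two identities for $\del_\Delta(u)$ and $\lk_\Delta(u)$ displayed above---one must check that taking the $(k+1)$-skeleton of ${\tt Ind}(G)$ and then deleting or linking at a non-$v$ vertex $u$ produces exactly the $(k+1)$- or $k$-skeleton of the corresponding smaller independence complex---but once these identities are confirmed the rest of the argument is a clean bootstrap.
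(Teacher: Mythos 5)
The paper does not actually prove this lemma---it is quoted from Ziegler \cite{Z2} (his Lemma 1.2) and used as a black box---so there is no in-paper proof to compare against; I can only assess your argument on its own terms, and it is correct. The two skeleton identities $\del_\Delta(u)={\tt Ind}(G\setminus\{u\})^{\leq k+1}$ and $\lk_\Delta(u)={\tt Ind}(G\setminus(\{u\}\cup N(u)))^{\leq k}$ check out, $v$ stays isolated in both smaller graphs, the deletion and link of the shedding vertex $u$ in ${\tt Ind}(G\setminus\{v\})^{\leq k}$ are exactly the pure vertex-decomposable inputs needed to invoke the inductive statement with parameters $k$ and $k-1$, and the cardinality argument for the facet condition is sound; this is in substance Ziegler's induction for shellability transported to vertex decomposability. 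Two small nits: in the base case the simplex hypothesis only forces $G$ to be edgeless on \emph{at most} $k+2$ vertices, not exactly $k+2$ (harmless, since $\Delta$ is then still a simplex), and you should note explicitly that purity passes to the deletion and link of a shedding vertex (facets of the deletion are facets of $\Delta$, and facets of the link are $F\setminus\{u\}$ for facets $F\ni u$), which is what licenses feeding them back into the inductive hypothesis as \emph{pure} vertex-decomposable complexes.
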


\begin{theorem}\label{vertDa}
If $d$ is not larger than the maximal degree of a graph $G$ with $n$ vertices, and $k$ an integer
less than $n/(2d)$, then ${\tt Ind}(G)^{\leq k}$ is pure vertex decomposable.
\end{theorem}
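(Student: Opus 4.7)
The plan is to induct on $n$, following Ziegler's strategy from \cite{Z2} for chessboard skeleta but now in the general graph setting. The base case is essentially $k=0$: then ${\tt Ind}(G)^{\leq 0}$ is the discrete set of $n$ vertices, which is pure $0$-dimensional and vertex decomposable.

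For the inductive step the argument splits into two cases according to whether $G$ has an isolated vertex. If $G$ does have an isolated vertex $v$, then Lemma \ref{zigge} reduces the task to showing that ${\tt Ind}(G\setminus v)^{\leq k-1}$ is pure vertex decomposable. Since $G\setminus v$ has $n-1$ vertices with maximum degree still bounded by $d$, and a short arithmetic check using $d\geq 1$ and $k<n/(2d)$ gives $k-1<(n-1)/(2d)$, the induction hypothesis applies and closes this case.

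If $G$ has no isolated vertex, the plan is to pick an arbitrary vertex $v$ and verify the conditions of Definition \ref{defVD} directly. The link $\lk_v({\tt Ind}(G)^{\leq k})={\tt Ind}(G\setminus N[v])^{\leq k-1}$ lives on $n-1-\deg(v)$ vertices; since $\deg(v)\leq d<2d$ and $2dk<n$ one checks that $k-1<(n-1-\deg(v))/(2d)$, so the inductive hypothesis gives pure vertex decomposability of the link. The deletion $\del_v({\tt Ind}(G)^{\leq k})={\tt Ind}(G\setminus v)^{\leq k}$ falls under the induction hypothesis whenever $n>2dk+1$, and the facet condition follows automatically from purity of the deletion at top dimension $k$ (any facet of $\del_v$ of size $k+1$ cannot be extended by $v$ without exceeding the skeleton level).

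The principal obstacle will be the tight boundary case $n=2dk+1$ arising in the no-isolated-vertex subcase, where the inductive hypothesis just misses applying to $G\setminus v$. Resolving this requires choosing $v$ carefully in terms of the structure of $G$: when $G$ contains a vertex $u$ of degree $1$ one takes $v$ to be its unique neighbor, so that $u$ becomes isolated in $G\setminus v$ and Lemma \ref{zigge} can be invoked a second time on the deletion; when $G$ has minimum degree at least $2$ (which forces $d\geq 2$ and $G$ to contain a cycle), a finer structural argument using a vertex on a shortest cycle or of extremal degree is needed to close the induction. Making the induction close uniformly across this edge case is the hardest step of the proof.
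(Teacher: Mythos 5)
Your proposal follows the ``obvious'' induction (delete/link a single vertex $v$ and recurse), and you have correctly located where it breaks: the deletion $\del_v\big({\tt Ind}(G)^{\leq k}\big)={\tt Ind}(G\setminus \{v\})^{\leq k}$ still sits at skeleton level $k$ but has only $n-1$ vertices, so the inductive hypothesis needs $k<(n-1)/(2d)$ and fails precisely when $n=2dk+1$. The problem is that you do not resolve this case: for minimum degree at least $2$ you only say that ``a finer structural argument is needed,'' and the patches you gesture at do not obviously close the loop. Deleting any single vertex leaves a graph on $2dk$ vertices whose $k$-skeleton is outside the reach of the hypothesis, and there is no evident choice of $v$ (shortest cycle, extremal degree, etc.) that escapes this; you would be trying to prove the $n=2dk$ instance of a statement whose hypothesis requires $n\geq 2dk+1$. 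So as written this is a genuine gap at the heart of the induction, not a routine edge case.

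The paper's proof is engineered specifically to avoid ever invoking the inductive hypothesis on a deletion. Fix a vertex $u$ with $N(u)=\{v_1,\dots,v_c\}$, $c\leq d$. First, ${\tt Ind}(G\setminus(N(u)\cup\{u\}))^{\leq k-1}$ is vertex decomposable by induction (it is a $(k-1)$-skeleton on at least $n-d-1\geq n-2d$ vertices, and $k-1\leq (n-2d)/(2d)$), so Ziegler's Lemma \ref{zigge} (with $u$ isolated in $G\setminus N(u)$) gives that ${\tt Ind}(G\setminus N(u))^{\leq k}$ is vertex decomposable. Then one adds the $v_i$ back one at a time: at each stage the \emph{deletion} of $v_i$ is exactly the complex constructed at the previous stage (so no inductive hypothesis is needed for it), while the \emph{link} of $v_i$ is a $(k-1)$-skeleton on at least $n-2d$ vertices, which is covered by induction. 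After $c$ steps one arrives at ${\tt Ind}(G)^{\leq k}$. Your isolated-vertex case and your link computations coincide with ingredients of this argument, but the chain of intermediate complexes ${\tt Ind}(G\setminus N(u))^{\leq k}\subseteq {\tt Ind}(G\setminus\{v_1,\dots,v_{c-1}\})^{\leq k}\subseteq\cdots\subseteq {\tt Ind}(G)^{\leq k}$ is the missing idea. (A smaller point: your facet condition relies on the deletion being pure of dimension $k$; this also deserves a check via the bound that maximal independent sets have at least $\lceil m/(d+1)\rceil$ vertices, but it is minor compared to the issue above.)
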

\begin{proof}
If $d=0$ then ${\tt Ind}(G)$ is a simplex and all of its skeletons are vertex decomposable. Hence we
can assume that $d\geq 1$.  Note that a facet of ${\tt Ind}(G)$ will have at least $\lfloor n/d \rfloor$ vertices, and hence our skeletons will
always be pure. 

The proof is by induction on $n$. If $n=0$ the statement is true because the empty complex is
vertex decomposable.

Next we assume $n>0$.  We fix a vertex $u \in G$ and let $N(u)=\{v_1,v_2,\ldots, v_c\}$; note that $c\leq d$.
The complex ${\tt Ind}(G \setminus (N(u)\cup \{u\}))^{\leq k-1}$ is vertex decomposable 
by induction since 
\[ k-1 \leq \frac{n}{2d}-1 = \frac{n-2d}{2d} \leq \frac{|V(G)\setminus(N(u)\cup \{u\})|}{2d},\]
and hence by Lemma~\ref{zigge}, the complex ${\tt Ind}(G \setminus N(u))^{\leq k}$
is also vertex decomposable.

The next step is to show that the complex ${\tt Ind}(G \setminus \{v_1,v_2,\ldots,v_{c-1}\})^{\leq k}$
is vertex decomposable.  For this we use Definition \ref{defVD} and investigate the link and deletion of $v_c$. The deletion of
$v_c$ is ${\tt Ind}(G \setminus N(u))^{\leq k}$, which is vertex decomposable. The link of $v_c$
is ${\tt Ind}(G \setminus (N(u)\cup N(v_c)) )^{\leq k-1}$ and this is vertex decomposable by
induction since
\[ k-1 \leq \frac{n}{2d}-1= \frac{n-2d}{2d}  \leq \frac{| V(G) \setminus (N(u)\cup N(v_c)) |}{2d}. \]
We conclude that ${\tt Ind}(G \setminus \{v_1,v_2,\ldots,v_{c-1}\})^{\leq k}$ is vertex decomposable.

Now we repeat the step.  Once again we show that ${\tt Ind}(G \setminus \{v_1,v_2,\ldots,v_{c-2}\})^{\leq k}$
is vertex decomposable by considering the link and deletion of $v_{c-1}$. The deletion of $v_{c-1}$ is exactly
the complex we obtained in the last step above, which we concluded was vertex decomposable.  The link of $v_{c-1}$ is
${\tt Ind}(G \setminus ( \{v_1,v_2,\ldots, v_{c-1}\} \cup N(v_{c-1})) )^{\leq k-1}$ and this is vertex decomposable by
induction since
\[ k-1 \leq \frac{n}{2d}-1= \frac{n-2d}{2d}  \leq \frac{| V(G) \setminus 
    ( \{v_1,v_2,\ldots, v_{c-1}\} \cup N(v_{c-1})) |}{2d}. \]
Hence ${\tt Ind}(G \setminus \{v_1,v_2,\ldots,v_{c-2}\})^{\leq k}$
is vertex decomposable.

We continue with this procedure and after $c-2$ steps we conclude that ${\tt Ind}(G)^{\leq k}$ is vertex decomposable.
\end{proof}

We can apply Theorem~\ref{vertDa} to obtain another proof of Corollary~\ref{enkeltLemma}: if the $k$-skeleton of a 
complex on $n$ vertices is Cohen-Macaulay, then by the Auslander-Buchsbaum formula the projective
dimension of its Stanley-Reisner ring is at most $n-k$. 

\section{Generating functions of Betti numbers}

In this section we encode the graded Betti numbers $\beta_{i,j}$ as coefficients of a certain generating function in two variables.  We use combinatorial topology to determine certain relations among the generating functions and use these to derive results regarding graded betti numbers of edge ideals.  The relevant generating function is defined as follows.
\begin{definition}
$ \mathcal{B}(G;x,y)=\sum_{i,j} \beta_{i,j}(G)x^{j-i}y^i$.
\end{definition}
The two variables in $\mathcal{B}(G;x,y)$ correspond to well known algebraic parameters of the edge ideal: the $y$--degree is the projective dimension of $I_G$ (as discussed in the introduction) and the $x$--degree is the \emph{regularity} of $I_G$.  With Hochster's formula we can rewrite the generating function explicitly as
\[ \mathcal{B}(G;x,y)=\sum_{i,j} 
\sum_{W\subseteq V(G) \atop |W|=j} \dim_k \tilde{H}_{j-i-1} ({\tt Ind}(G[W]);k) x^{j-i}y^i.\]
We wish to use $\mathcal{B}(G;x,y)$ to derive certain properties of edge ideals for some classes of graphs.  We first establish a few easy lemmas.
\begin{lemma}\label{lg1}
If $G$ is a graph with an isolated vertex $v$ then
\[\mathcal{B}(G;x,y)=\mathcal{B}(G\setminus \{v\};x,y).\]
\end{lemma}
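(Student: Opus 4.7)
The plan is to read off the lemma directly from Hochster's formula, using the observation that an isolated vertex of $G$ becomes a cone point in every relevant independence complex.

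Starting from the Hochster expansion displayed just before the lemma,
\[ \mathcal{B}(G;x,y) \;=\; \sum_{i,j} \sum_{\substack{W \subseteq V(G) \\ |W|=j}} \dim_k \tilde{H}_{j-i-1}({\tt Ind}(G[W]);k)\, x^{j-i}y^i, \]
I would split the inner sum according to whether $v \in W$ or not. The subsets $W \subseteq V(G)$ with $v \notin W$ are precisely the subsets of $V(G \setminus \{v\})$, and for these the induced subgraphs $G[W]$ and $(G \setminus \{v\})[W]$ coincide, so their contributions to $\mathcal{B}(G;x,y)$ are identical to the corresponding contributions to $\mathcal{B}(G \setminus \{v\};x,y)$.

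The key step is then to show that subsets $W$ with $v \in W$ contribute zero. Since $v$ is isolated in $G$, it is also isolated in $G[W]$, so $v$ is adjacent in ${\tt Ind}(G[W])$ to every other vertex of $W$. Equivalently, ${\tt Ind}(G[W])$ is the join $\{v\} * {\tt Ind}(G[W \setminus \{v\}])$, i.e.\ a cone with apex $v$. Cones are contractible and therefore have trivial reduced homology over $k$ in every dimension, so $\dim_k \tilde{H}_{j-i-1}({\tt Ind}(G[W]);k) = 0$ for all $i, j$.

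There is no real obstacle; the only thing to note is the minor bookkeeping check that the indices $(i,j)$ range over the same set on both sides once the $v \in W$ terms are discarded, which is immediate because the remaining sum on the left is indexed exactly by subsets of $V(G \setminus \{v\})$. Combining the two observations yields $\mathcal{B}(G;x,y) = \mathcal{B}(G \setminus \{v\};x,y)$, as desired.
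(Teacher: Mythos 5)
Your proof is correct and follows exactly the paper's argument: subsets $W$ containing $v$ contribute nothing because ${\tt Ind}(G[W])$ is a cone with apex $v$, and the remaining subsets reproduce $\mathcal{B}(G\setminus\{v\};x,y)$ term by term. You have simply spelled out the bookkeeping that the paper leaves implicit.
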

\begin{proof}
For every $W\subseteq V(G)$ with $v \in W$ we have that ${\tt Ind}(G[W])$ is a cone with
apex $v$ and hence\linebreak $\dim_k \tilde{H}_{j-i-1} ({\tt Ind}(G[W]);k)=0$.
\end{proof}
\begin{lemma}\label{lg2}
If $G$ is a graph with an isolated edge $uv$ then
\[\mathcal{B}(G;x,y)=(1+xy)\mathcal{B}(G\setminus \{u,v\};x,y).\]
\end{lemma}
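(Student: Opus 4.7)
The plan is to apply Hochster's formula (Theorem \ref{hochster}) to expand $\mathcal{B}(G;x,y)$ as a sum indexed by subsets $W\subseteq V(G)$, and then partition the sum according to $W\cap\{u,v\}$. Since $u$ and $v$ form an isolated edge, there are no edges of $G$ between $\{u,v\}$ and $V(G)\setminus\{u,v\}$, and this gives very strong control over the induced complex ${\tt Ind}(G[W])$ in each case.

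First I would handle the two ``mixed'' cases. If $W\cap\{u,v\}=\{u\}$, then $u$ has no neighbors in $G[W]$ (its only neighbor in $G$ is $v\notin W$), so ${\tt Ind}(G[W])$ is a cone with apex $u$ and all its reduced homology vanishes; symmetrically for $\{v\}$. Hence these subsets contribute $0$ to $\mathcal{B}(G;x,y)$. If $W\cap\{u,v\}=\emptyset$, then $G[W]=(G\setminus\{u,v\})[W]$ verbatim, and summing over such $W$ recovers exactly $\mathcal{B}(G\setminus\{u,v\};x,y)$.

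The main content of the argument is the remaining case $W\supseteq\{u,v\}$. Writing $W'=W\setminus\{u,v\}$, the graph $G[W]$ is the disjoint union of the edge $uv$ and $(G\setminus\{u,v\})[W']$, so ${\tt Ind}(G[W])$ is the join ${\tt Ind}(uv)\ast {\tt Ind}((G\setminus\{u,v\})[W'])$. Since ${\tt Ind}(uv)=S^0$ is two points, this join is the suspension of ${\tt Ind}((G\setminus\{u,v\})[W'])$, and reduced homology shifts by one:
\[ \dim_k\tilde H_{|W|-i-1}({\tt Ind}(G[W]);k)=\dim_k\tilde H_{|W'|-(i-1)-1}({\tt Ind}((G\setminus\{u,v\})[W']);k). \]
Setting $j'=|W'|$ and $i'=i-1$, the monomial weight satisfies $x^{|W|-i}y^{i}=xy\cdot x^{j'-i'}y^{i'}$, so summing over all such $W$ gives $xy\cdot\mathcal{B}(G\setminus\{u,v\};x,y)$. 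Adding the three contributions yields $\mathcal{B}(G;x,y)=(1+xy)\,\mathcal{B}(G\setminus\{u,v\};x,y)$.

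The only real subtlety is bookkeeping: one must be careful that the join/suspension identification is compatible with the indexing used in Hochster's formula (the shift in both the homological degree and the total degree $j$ is precisely what produces the factor $xy$), and that the edge cases $i=0$ or $W=\emptyset$ are consistent with the convention $\tilde H_{-1}(\emptyset;k)=k$. Apart from this index tracking, the argument is a direct topological decomposition.
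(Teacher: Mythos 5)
Your proposal is correct and follows essentially the same route as the paper: both split the Hochster-formula sum according to $W\cap\{u,v\}$, kill the mixed cases as cones, and identify the case $\{u,v\}\subseteq W$ as a suspension of ${\tt Ind}(G[W\setminus\{u,v\}])$, producing the $xy$ shift. No substantive differences.
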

\begin{proof}
For every $W\subseteq V(G)$ such that exactly one of $\{u, v\}$ is in $W$ we have that ${\tt Ind}(G[W])$ is a cone and hence $\dim_k \tilde{H}_{j-i-1} ({\tt Ind}(G[W]);k)=0$.  If $\{u,v\}\subseteq W\subseteq V(G)$ then ${\tt Ind}(G[W])$
is a suspension of ${\tt Ind}(G[W\setminus \{u,v\}])$ and we have
\[ \begin{array}{rcl} \dim_k \tilde{H}_{j-i-1} ({\tt Ind}(G[W]);k)
  & = & \dim_k \tilde{H}_{j-i-1} ({\tt susp}({\tt Ind}(G[W] \setminus \{u,v\}));k) \\
  & = & \dim_k \tilde{H}_{j-i-1-1} ({\tt Ind}(G[W] \setminus \{u,v\});k) \\
  & = & \dim_k \tilde{H}_{(j-2)-(i-1)-1} ({\tt Ind}(G[W] \setminus \{u,v\});k).
   \end{array}  \]

In the definition of $\mathcal{B}(G;x,y)$ involving Hochster's formula we consider a sum over subsets $W \subseteq V(G)$.  We now split this sum according to the intersection $\{u,v\}\cap W$. If $\{u,v\} \cap W= \emptyset$ the partial sum is of course $\mathcal{B}(G\setminus \{u,v\};x,y)$.  If exactly one of $\{u, v\}$ is in $W$ we have seen that the partial sum is $0$.  If both $\{u,v\}$ are in $W$ then we use the formula from the previous paragraph to obtain the desired term: 
\[ \begin{array}{cl}
  & \displaystyle  \sum_{i,j} \sum_{u,v\in W\subseteq V(G) \atop |W|=j} 
    \dim_k \tilde{H}_{j-i-1} ({\tt Ind}(G[W]);k) x^{j-i}y^i \\
= & \displaystyle  \sum_{i,j}\sum_{u,v\in W\subseteq V(G) \atop |W|=j} 
    \dim_k \tilde{H}_{(j-2)-(i-1)-1} ({\tt Ind}(G[W] \setminus \{u,v\});k) x^{j-i}y^i \\
= & \displaystyle  xy \sum_{i,j}\sum_{ W\subseteq V(G)\setminus \{u,v\} \atop |W|=j-2} 
    \dim_k \tilde{H}_{(j-2)-(i-1)-1} ({\tt Ind}(G[W] \setminus \{u,v\});k) x^{(j-2)-(i-1)}y^{i-1} \\
= & xy\mathcal{B}(G\setminus \{u,v\};x,y).
   \end{array} \]
\end{proof}
\begin{lemma}\label{lg3}
Let $G$ be a graph with a vertex $v$ and a set of vertices $U = \{u_1, \dots, u_m\}$ all different from $v$.  If $N(v)\subseteq N(u)$
for all $u\in U$, then for $\tilde{U}:=U\cup \{v\}$ we have
\[ \mathcal{B}(G;x,y) = \mathcal{B}(G \setminus \{v\} ;x,y) + (1+y)^k(
 \mathcal{B}(G \setminus U ;x,y) - \mathcal{B}(G \setminus \tilde{U} ;x,y)). \]
\end{lemma}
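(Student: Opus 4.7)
The plan is to split the Hochster-formula sum defining $\mathcal{B}(G;x,y)$ according to how the indexing subset $W\subseteq V(G)$ meets $\tilde U=U\cup\{v\}$, and then to use the hypothesis $N(v)\subseteq N(u)$ for $u\in U$ to collapse away the intersection $W\cap U$ topologically. More precisely, I would first observe the standard folding fact: whenever $v,u\in W$ with $N_G(v)\subseteq N_G(u)$, every maximal independent set of $G[W]$ containing $u$ must also contain $v$ (since it avoids $N(u)\supseteq N(v)$ and hence can be enlarged by $v$). Consequently $u$ is dominated by $v$ in ${\tt Ind}(G[W])$, and ${\tt Ind}(G[W])$ collapses onto the deletion ${\tt Ind}(G[W\setminus\{u\}])$. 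Iterating this for each $u\in W\cap U$ (the condition $N_G(v)\subseteq N_G(u')$ is preserved after removing other $u$'s) gives
\[{\tt Ind}(G[W])\ \simeq\ {\tt Ind}\bigl(G[W\setminus(W\cap U)]\bigr)\]
whenever $v\in W$, and in particular equal reduced Betti numbers in every degree.

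Next I would organize the sum. The subsets $W$ with $v\notin W$ contribute exactly $\mathcal{B}(G\setminus\{v\};x,y)$ by definition. For the remaining subsets I would group them by the intersection $S:=W\cap U\subseteq U$ and write $W=\{v\}\cup S\cup W'$ with $W'\subseteq V(G)\setminus\tilde U$. Setting $j'=1+|W'|$ and $i'=i-|S|$, the exponent bookkeeping gives $j-i-1=j'-i'-1$ and
\[x^{j-i}y^i\ =\ y^{|S|}\cdot x^{j'-i'}y^{i'},\]
while the homotopy equivalence above identifies $\dim_k\tilde H_{j-i-1}({\tt Ind}(G[W]);k)$ with $\dim_k\tilde H_{j'-i'-1}({\tt Ind}((G\setminus U)[\{v\}\cup W']);k)$. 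Summing the inner part over all $W'\subseteq V(G)\setminus\tilde U$ and all $i'$ yields the $v$-containing part of $\mathcal{B}(G\setminus U;x,y)$, which by splitting $\mathcal{B}(G\setminus U;x,y)$ according to whether $v$ is present equals $\mathcal{B}(G\setminus U;x,y)-\mathcal{B}(G\setminus\tilde U;x,y)$.

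Finally I would sum over $S\subseteq U$ (both empty and nonempty, accounting for all $W$ containing $v$): the weight $y^{|S|}$ produces $\sum_{S\subseteq U}y^{|S|}=(1+y)^{|U|}$, so the total $v\in W$ contribution is
\[(1+y)^{|U|}\bigl(\mathcal{B}(G\setminus U;x,y)-\mathcal{B}(G\setminus\tilde U;x,y)\bigr),\]
and adding the $v\notin W$ contribution $\mathcal{B}(G\setminus\{v\};x,y)$ gives the claimed identity (with $k=|U|$). The main obstacle is ensuring the folding step is really valid \emph{uniformly} in $W$ and reindexing the exponents of $x$ and $y$ correctly under the shift $j\mapsto j-|S|$, $i\mapsto i-|S|$; once that bookkeeping is set up, the geometric series $\sum_{S\subseteq U} y^{|S|}=(1+y)^{|U|}$ drops out cleanly and merges the $S=\emptyset$ and $S\neq\emptyset$ subcases into a single $(1+y)^{|U|}$ factor.
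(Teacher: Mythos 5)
Your proposal is correct and follows essentially the same route as the paper: fold away $W\cap U$ using $N(v)\subseteq N(u)$ to reduce each $v$-containing term to the corresponding term for $G\setminus U$ minus $G\setminus\tilde U$, then split the Hochster sum over the intersection with $U$ (you index by the subset $S$, the paper by its cardinality $l$ with a $\binom{k}{l}$ factor, which is the same computation) to produce the $(1+y)^{|U|}$ factor. The exponent bookkeeping $x^{j-i}y^{i}=y^{|S|}x^{j'-i'}y^{i'}$ matches the paper's reindexing exactly.
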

\begin{proof}
We will use the notion of a folding of a graph as defined in Section 2.1.  In this context we have that a vertex of a graph whose neighborhood dominates the
neighborhood of another vertex can be removed without changing the homotopy type of
the independence complex.  Using this we calculate:
\[ 
\begin{array}{rcl}
  \displaystyle  \sum_{v\in W\subseteq V(G) \atop |W|=j, |W\cap U|=l} 
    \dim_k \tilde{H}_{j-i-1} ({\tt Ind}(G[W]);k) &
 =&
  \displaystyle  \sum_{v\in W\subseteq V(G) \atop |W|=j, |W\cap U|=l} 
    \dim_k \tilde{H}_{j-i-1} ({\tt Ind}(G[W\setminus U]);k) \\
 &=&
 \displaystyle  {k \choose l} \sum_{v\in W\subseteq V(G)\setminus U \atop |W|=j-l} 
    \dim_k \tilde{H}_{(j-l)-(i-l)-1} ({\tt Ind}(G[W]);k) \\
 &=&
 \displaystyle  {k \choose l}( \beta_{i-l,j-l}(G\setminus U) - \beta_{i-l,j-l}(G\setminus \tilde{U})).
\end{array}
\]
We then insert this into the relevant generating functions to obtain the following.
\[ \begin{array}{rcl}
  \mathcal{B}(G;x,y)-\mathcal{B}(G\setminus \{v\};x,y) & = & 
  \displaystyle \sum_{i,j} \sum_{v\in W\subseteq V(G) \atop |W|=j} 
    \dim_k \tilde{H}_{j-i-1} ({\tt Ind}(G[W]);k)x^{j-i}y^i \\ 
& = & 
  \displaystyle \sum_{i,j}\sum_{l=0}^k \sum_{v\in W\subseteq V(G) \atop |W|=j, |W\cap U|=l} 
    \dim_k \tilde{H}_{j-i-1} ({\tt Ind}(G[W]);k)x^{j-i}y^i \\
& = & 
  \displaystyle \sum_{i,j}\sum_{l=0}^k 
   {k \choose l}( \beta_{i-l,j-l}(G\setminus U) - \beta_{i-l,j-l}(G\setminus \tilde{U}))
   x^{j-i}y^i \\
& = & 
  \displaystyle \sum_{l=0}^k {k \choose l} y^l \sum_{i,j}
  ( \beta_{i-l,j-l}(G\setminus U) - \beta_{i-l,j-l}(G\setminus \tilde{U}))x^{(j-l)-(i-l)}y^{i-l} \\
& = & 
  \displaystyle \sum_{l=0}^k {k \choose l} y^l (\mathcal{B}(G \setminus U ;x,y) - \mathcal{B}(G \setminus \tilde{U} ;x,y))\\
& = & (1+y)^k (\mathcal{B}(G \setminus U ;x,y) - \mathcal{B}(G \setminus \tilde{U} ;x,y)).
\end{array}
\]
\end{proof}
One special case of Lemma \ref{lg3} is quite useful.
\begin{corollary}\label{cg1}
If $G$ is a graph with a vertex $v$ such that $N(v)=\{w\}$ then
\[ \mathcal{B}(G;x,y) =\mathcal{B}(G\setminus \{v\};x,y)+xy(1+y)^{|N(w)|-1} \mathcal{B}(G\setminus (N(w)\cup \{w\}) ;x,y).\]
\end{corollary}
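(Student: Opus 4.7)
The plan is to recognize this as a direct consequence of Lemma \ref{lg3} applied with the set $U := N(w) \setminus \{v\}$, combined with the two simplification lemmas \ref{lg1} and \ref{lg2}. First I would verify the hypothesis of Lemma \ref{lg3}: since $N(v) = \{w\}$, for any $u \in U = N(w) \setminus \{v\}$ we have $w \in N(u)$ (because $u$ is adjacent to $w$), hence $N(v) = \{w\} \subseteq N(u)$. The cardinality is $|U| = |N(w)| - 1$, so the exponent $(1+y)^k$ in Lemma \ref{lg3} becomes $(1+y)^{|N(w)|-1}$, which already matches what we want. Also $\tilde{U} = U \cup \{v\} = N(w)$.

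Next I would simplify the two generating functions appearing in the Lemma \ref{lg3} output. In $G \setminus \tilde{U} = G \setminus N(w)$, the vertex $w$ has lost all of its neighbors and so is isolated; by Lemma \ref{lg1} we therefore get
\[\mathcal{B}(G \setminus N(w); x, y) = \mathcal{B}(G \setminus (N(w) \cup \{w\}); x, y).\]
In $G \setminus U = G \setminus (N(w) \setminus \{v\})$, the vertex $w$ has only $v$ remaining as a neighbor, and $v$ has only $w$ as a neighbor, so the pair $\{v, w\}$ forms an isolated edge. By Lemma \ref{lg2} we obtain
\[\mathcal{B}(G \setminus U; x, y) = (1 + xy)\, \mathcal{B}(G \setminus (N(w) \cup \{w\}); x, y).\]

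Substituting both simplifications into Lemma \ref{lg3}, the difference $\mathcal{B}(G \setminus U) - \mathcal{B}(G \setminus \tilde{U})$ collapses to $xy \cdot \mathcal{B}(G \setminus (N(w) \cup \{w\}); x, y)$, and the factor $(1+y)^{|N(w)|-1}$ then gives exactly the claimed formula. There is no real obstacle here; the only thing to be careful about is correctly identifying which vertex plays the role of ``$v$'' and which set plays the role of ``$U$'' in Lemma \ref{lg3} (the lemma is stated for a single distinguished vertex with a dominated neighborhood, and we must choose that vertex to be our leaf $v$ while letting the remaining neighbors of $w$ form $U$), after which the whole statement is just bookkeeping.
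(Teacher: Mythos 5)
Your proposal is correct and follows essentially the same route as the paper: apply Lemma \ref{lg3} with the leaf $v$ as the distinguished vertex and $U = N(w)\setminus\{v\}$, then simplify $\mathcal{B}(G\setminus U)$ via the isolated edge $vw$ (Lemma \ref{lg2}) and $\mathcal{B}(G\setminus \tilde{U})$ via the isolated vertex $w$ (Lemma \ref{lg1}), so that the difference collapses to $xy\,\mathcal{B}(G\setminus(N(w)\cup\{w\});x,y)$. No gaps.
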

\begin{proof}
From Lemma~\ref{lg3} we have that $\mathcal{B}(G;x,y)$ equals 
\[ \mathcal{B}(G\setminus \{v\};x,y)+(1+y)^{|N(w)|-1}
 ( \mathcal{B}(G\setminus (N(w)\setminus \{v\}) ;x,y) - \mathcal{B}(G\setminus N(w) ;x,y)).\]
In the graph $G\setminus (N(w)\setminus \{v\})$ the edge $vw$ is isolated and hence by Lemma~\ref{lg2} we have
\[ \mathcal{B}(G\setminus (N(w)\setminus \{v\}) ;x,y) = (1+xy) \mathcal{B}(G\setminus (N(w)\cup \{w\}) ;x,y).\]
If we also remove the vertex $v$ we get a cone with apex $w$ and by Lemma~\ref{lg1},
\[ \mathcal{B}(G\setminus N(w) ;x,y) = \mathcal{B}(G\setminus (N(w)\cup \{w\}) ;x,y).\]
\end{proof}

Corollary~\ref{cg1} is a generalization of the main result of Jacques from \cite{JK_ny}, and also many of the results of Jacques and Katzman from \cite{J_ny}.  These authors used different methods and demanded that at most one vertex from $N(w)$ had more than one neighboor. The
following also generalize results from \cite{J_ny} and \cite{JK_ny}.

\begin{theorem}\label{tg1}
 Let $\mathcal{G}$ be the set of graphs defined by
 \begin{itemize}
   \item[(i)] All cycles and complete graphs are in $\mathcal{G}$.
   \item[(ii)] If $G$ and $H$ are in $\mathcal{G}$ then their disjoint union is in $\mathcal{G}$.
   \item[(iii)] Let $G$ be a graph with vertices $\{u,v\}$ such that $N(v)\subseteq N(u)$. If
                $G\setminus \{u\},  G\setminus \{v\},$ and $ G\setminus \{u,v\}$, are in $\mathcal{G}$ then
                so is $G$.
\end{itemize}
 Then for any $G \in \mathcal{G}$ the Betti numbers of $I_G$ do not depend on the ground field $k$.
\end{theorem}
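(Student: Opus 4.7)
The plan is to proceed by structural induction on the recursive construction of $\mathcal{G}$, showing at each stage that the two-variable generating function $\mathcal{B}(G;x,y)$ has coefficients independent of the ground field~$k$.

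For the base case (i), first take $G=K_n$. Any induced subgraph of $K_n$ is a complete graph $K_m$, and ${\tt Ind}(K_m)$ is $m$ isolated points, so by Hochster's formula (Theorem~\ref{hochster}), $\beta_{i,j}(K_n)=0$ unless $j=i+1$, in which case $\beta_{i,i+1}(K_n)=i\binom{n}{i+1}$, plainly field-independent. For $G=C_n$, every induced subgraph is either $C_n$ itself or a disjoint union of paths, and the independence complexes of paths and cycles are known to be homotopy equivalent to spheres, wedges of two spheres, or points. Iterated joins of such spaces are again spheres or contractible, so their reduced homology groups are free abelian, and Hochster's formula yields field-independent Betti numbers.

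For the inductive step (ii), suppose $G=G_1\sqcup G_2$ with $G_1,G_2\in\mathcal{G}$. Writing $S=S_1\otimes_k S_2$ with $S_i=k[x_v:v\in V(G_i)]$, we have $I_G=I_{G_1}S+I_{G_2}S$ and hence $R_G\cong R_{G_1}\otimes_k R_{G_2}$. The minimal free resolution of $R_G$ over $S$ is the tensor product of the minimal free resolutions of $R_{G_1}$ and $R_{G_2}$ (by the Künneth formula for Tor, since we tensor over a field), so reading off bigraded ranks gives
\[\mathcal{B}(G_1\sqcup G_2;x,y)=\mathcal{B}(G_1;x,y)\cdot\mathcal{B}(G_2;x,y),\]
and products of field-independent polynomials remain field-independent.

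For the inductive step (iii), apply Lemma~\ref{lg3} with $U=\{u\}$ (so $m=1$) to obtain
\[\mathcal{B}(G;x,y)=\mathcal{B}(G\setminus\{v\};x,y)+(1+y)\bigl(\mathcal{B}(G\setminus\{u\};x,y)-\mathcal{B}(G\setminus\{u,v\};x,y)\bigr).\]
By hypothesis each of the three graphs $G\setminus\{v\}$, $G\setminus\{u\}$, $G\setminus\{u,v\}$ lies in $\mathcal{G}$, and by the inductive hypothesis each has a field-independent generating function; hence $\mathcal{B}(G;x,y)$ does as well. The only real obstacle is the base case for cycles, which requires invoking the known homotopy types of independence complexes of paths and cycles in order to rule out torsion in the relevant reduced homology; the rest is a bookkeeping exercise with the generating-function identities already developed in Section~6.
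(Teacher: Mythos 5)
Your proof is correct and follows the same overall architecture as the paper's: induction over the recursive construction, explicit topology for the base cases, a product formula for disjoint unions, and Lemma~\ref{lg3} for rule (iii). The one place where you take a genuinely different (and arguably cleaner) route is step (ii): the paper argues topologically, using the criterion that field-independence of Betti numbers is equivalent to torsion-freeness of the homology of all induced subcomplexes, together with the facts that ${\tt Ind}(G_1\sqcup G_2)={\tt Ind}(G_1)*{\tt Ind}(G_2)$ and that joins of torsion-free complexes are torsion-free (Munkres); you instead observe $R_{G_1\sqcup G_2}\cong R_{G_1}\otimes_k R_{G_2}$ and that minimal free resolutions tensor, yielding the multiplicativity $\mathcal{B}(G_1\sqcup G_2;x,y)=\mathcal{B}(G_1;x,y)\mathcal{B}(G_2;x,y)$. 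The algebraic version buys you a stronger, explicit convolution formula for the Betti numbers and avoids carrying ``all induced subcomplexes are torsion-free'' as the inductive invariant, while the paper's version stays within the combinatorial-topology toolkit it uses elsewhere. For the base case the paper simply cites Kozlov and Jacques, whereas you sketch the computation; your sketch is fine (induced proper subgraphs of a cycle are disjoint unions of paths, whose independence complexes join to spheres or contractible spaces), and steps (iii) are identical in both arguments.
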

\begin{proof}
  If $G$ is a cycle or a complete graph then this follows directly from homology results of \cite{K_ny},
  and is also calculated in \cite{J_ny}.

  For the other cases we proceed by induction on the number of vertices of $G$.  From Hochster's formula we see that the Betti numbers of a Stanley-Reisner ring do not depend on the ground field if and only if the the homology of all induced complexes are torsion free. Joins of torsion free complexes
  are torsion free \cite{M_ny}, and since taking the disjoint union of graphs corresponds to taking joins of 
  their independence complexes, we see that graphs created with (ii) satisfy our condition.

  Finally, we apply Lemma~\ref{lg3} to conclude that the Betti numbers of graphs created with (iii) do not depend on the ground field.
\end{proof}

\begin{corollary}\label{cg2}
If $G$ is a forest then the Betti numbers of $G$ do not depend on the ground field.
\end{corollary}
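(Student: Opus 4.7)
The plan is to deduce this directly from Theorem \ref{tg1} by showing that every forest lies in the class $\mathcal{G}$. Once that inclusion is established, the field-independence of the Betti numbers is immediate.

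I would proceed by induction on the number of vertices $n$. For the base case, $K_1$ (and, if one wishes to include it, the empty graph) belongs to $\mathcal{G}$ by construction (i). For the inductive step, let $F$ be a forest on $n \geq 2$ vertices. If $F$ is disconnected, write it as a disjoint union $F = F_1 \sqcup F_2$ of two nonempty forests, each with strictly fewer vertices; these lie in $\mathcal{G}$ by induction, and so $F \in \mathcal{G}$ by construction (ii). Hence I may assume $F$ is a tree.

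If the tree $F$ has exactly two vertices then $F = K_2$, which is in $\mathcal{G}$ by (i). So suppose $F$ is a tree on $n \geq 3$ vertices. Choose a leaf $v$, and let $w$ be its unique neighbor. Since $F$ is connected with at least three vertices, $w$ cannot itself be a leaf (otherwise $F$ would be the single edge $vw$), so $w$ has at least one additional neighbor $u \neq v$. Then $N(v) = \{w\} \subseteq N(u)$, so $u$ and $v$ satisfy the hypothesis of construction (iii). The three graphs $F \setminus \{u\}$, $F \setminus \{v\}$, and $F \setminus \{u,v\}$ are forests on fewer than $n$ vertices and therefore lie in $\mathcal{G}$ by the inductive hypothesis. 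Applying (iii) gives $F \in \mathcal{G}$, which completes the induction.

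There is essentially no obstacle here once Theorem \ref{tg1} is in hand; the only nontrivial point is the existence of the pair $(u,v)$ witnessing the neighborhood containment, and this is guaranteed by the elementary fact that in a tree on at least three vertices every leaf has a neighbor of degree at least two. Thus every forest belongs to $\mathcal{G}$, and the conclusion of Theorem \ref{tg1} yields the corollary.
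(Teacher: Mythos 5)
Your proof is correct and follows essentially the same route as the paper: both reduce to showing $F\in\mathcal{G}$ by handling small components directly and, for a tree on at least three vertices, applying construction (iii) to a leaf $v$ together with a vertex at distance two from it (your $u$), noting that the three deleted graphs are smaller forests. Your write-up is in fact slightly more explicit than the paper's about verifying the hypothesis $N(v)\subseteq N(u)$ and the induction bookkeeping.
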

\begin{proof}
 We will show that $G\in \mathcal{G}$ and employ Theorem~\ref{tg1}. If no connected component
 of $G$ has more than two vertices then clearly $G\in \mathcal{G}$. If there is a component
 of $G$ with at least three vertices, we let $v$ be a leaf of that component and let $w$ be a
 vertex of distance two from $v$.  We then use Corollary~\ref{cg1} together with the fact
 that subgraphs of forests are forests.
\end{proof}

We can also use Corollary~\ref{cg1} as in the proof of  Corollary~\ref{cg2} to provide a recursive formula for the regularity and projective dimension of forests.  Suppose $v \in G$ is a leaf vertex of a graph $G$ with $N(v) = \{w\}$.  We use the fact that regularity of $I_G$ is the
$x$--degree of $\mathcal{B}(G;x,y)$, and that the projective dimension is the $y$--degree
 together with 
\[ \mathcal{B}(G;x,y) =\mathcal{B}(G\setminus \{v\};x,y)+xy(1+y)^{|N(w)|-1} \mathcal{B}(G\setminus (N(w)\cup \{w\}) ;x,y)\]
to obtain
 \[ \reg(I_G) = \max \big\{\reg(I_{G\setminus \{v\}}), \reg(I_{G\setminus (N(w)\cup \{w\})})+1 \big\} \]
and
 \[ \pdim(G) = \max \big\{ \pdim({G\setminus \{v\}}), \pdim({G\setminus (N(w)\cup \{w\})})+|N(w)| \big\}.\]

\section{Further remarks}
In this paper we used only basic constructions from combinatorial topology to establish results regarding Betti numbers, linearity of resolutions, and (sequential) Cohen-Macaulay properties of edge ideals.  It is our hope that more sophisticated tools from combinatorial topology will have further applications to the study of edge ideals of graphs (and more generally Stanley-Reisner ideals).  Further analysis of the combinatorial properties of certain classes of simplicial complexes can give good candidates for desired algebraic properties of the associated Stanley-Reisner ring (e.g. those that satisfy the conditions in Lemma \ref{SCMgraph}).  In this vein, tools from combinatorial topology may also offer insight into the less well understand class of edge ideals of \emph{hypergraphs} (Stanley-Reisner rings generated in some fixed degree $d > 2$).  At the same time one can ask the question if theorems from the study of Stanley Reisner rings can have applications to the more combinatorial topological study of certain classes of simplicial complexes.  For example the algebraic proof of the theorem from \cite{FH} regarding adding whiskers to chordal graphs gives some combinatorial topological (sequential Cohen-Macaulay) properties of the independence complex of such graphs.  In any case we see potential for interaction between the two fields and hope that this paper leads to further dialogue between mathematicians working in both areas.


\begin{thebibliography}{10}

\bibitem{ABM}    R. Aharoni, E. Berger, R. Meshulam.  Eigenvalues and homology of flag complexes 
                 and vector representations of graphs. \emph{Geom. Funct. Anal.} {\bf 15} (2005), 
                 no. 3, 555--566.

\bibitem{AH}     R. Aharoni, P. Haxell. Hall's theorem for hypergraphs. \emph{J. Graph Theory}
                 {\bf 35} (2000), no 2, 83--88.

\bibitem{BM}     L. J. Billera, A. N. Myers. Shellability of interval orders. \emph{Order} 15 (1998/99), no. 2, 113--117.

\bibitem{B}      A. Bj\"orner. Topological methods. \emph{Handbook of combinatorics}, Vol. 1, 2, 1819--1872, Elsevier, Amsterdam, 1995.

\bibitem{BW}     A. Bj\"orner, M. Wachs. Shellable nonpure complexes and posets. I. \emph{Trans. Amer. Math. Soc.} 348 (1996), no. 4, 1299--1327.

\bibitem{BWW}    A. Bj\"orner, M. Wachs, V. Welker. On sequentially Cohen-Macaulay complexes and 
                 posets, preprint.

\bibitem{claw1}  M. Chudnovsky, P. Seymour. The roots of the independence polynomial of a clawfree graph.
                 \emph{J. Combin. Theory Ser. B} {\bf 97} (2007), no. 3, 350--357. 

\bibitem{CN}     A. Corso, U. Nagel. Monomial and toric ideals associated to Ferrers graphs.
                 \emph{Trans. Amer. Math. Soc.}, to appear. [{\tt arXiv:math/0609371}]

\bibitem{CN1}    A. Corso, U. Nagel. Specializations of Ferrers ideals.
                 \emph{J. Algebraic Combin.}, to appear. [{\tt  arXiv:math/0703695}]

\bibitem{D}      G.A. Dirac, On rigid circuit graphs, \emph{Abh. Math. Sem. Univ. Hamburg} {\bf 25} (1961) 71--76.

\bibitem{Doc}    A. Dochtermann, Hom complexes and homotopy theory in the category of graphs. \emph{European J. Combin.}, in press 2008.

\bibitem{E1}     A. Engstr\"om, Independence complexes of claw-free graphs. \emph{European J. Combin.} {\bf 29} (2008), 
                 no. 1, 234--241.

\bibitem{E2}     A. Engstr\"om, Complexes of directed trees and independence complexes. \emph{Discrete Math.},
                 in press 2008.

\bibitem{claw2}  R. Faudree, E.  Flandrin, Z. Ryj\'a\v{c}ek, Claw-free graphs, \emph{Discrete Math.} {\bf 164} 
                 (1997), no. 1-3, 87--147. 

\bibitem{FT}     C.A. Francisco, A. Van Tuyl. Sequentially Cohen-Macaulay edge ideals. 
                 \emph{Proc. Amer. Math. Soc.} {\bf 135} (2007), no. 8, 2327--2337.

\bibitem{FH}     C.A. Francisco, H. T. H\`a. Whiskers and sequentially Cohen-Macaulay graphs. \emph{J. Combin. Theory Ser. A} 115 (2008), no. 2, 304--316.


\bibitem{F}      R. Fr\"oberg. On Stanley-Reisner rings. Topics in algebra, Part 2 (Warsaw, 1988), 57--70, 
                 Banach Center Publ., 26, Part 2, PWN, Warsaw, 1990.

\bibitem{HV}     H. T. H\`a, A. Van Tuyl, Monomial ideals, edge ideals of hypergraphs, and their graded Betti numbers.  \emph{J. Algebraic Combin.} 27 (2008), no. 2, 215--245.

\bibitem{H}      T. Hibi. Quotient algebras of Stanley-Reisner rings and local cohomology. 
                 \emph{J. Algebra} {\bf 140} (1991), no. 2, 336--343.

\bibitem{H1}     M. Hochster, Cohen-Macaulay rings, combinatorics, and simplicial complexes. 
                 Ring theory, II (Proc. Second Conf., Univ. Oklahoma, Norman, Okla., 1975), pp. 171--223. 
                 Lecture Notes in Pure and Appl. Math., Vol. 26, \emph{Dekker}, New York, 1977.

\bibitem{J_ny}  S. Jacques. PhD thesis, University of Sheffield, 2004, [{\tt arXiv:math/0410107}]

\bibitem{JK_ny} S. Jacques, M. Katzman. The Betti numbers of forests, preprint 2005, [{\tt arXiv:math/0501226}]

\bibitem{J}      J. Jonsson. Simplicial complexes of graphs. Lecture Notes in Mathematics, 1928. 
                 \emph{Springer-Verlag}, Berlin, 2008.

\bibitem{K_ny}  D.N. Kozlov. Complexes of directed trees. \emph{J. Combin. Theory Ser. A} {\bf 88} (1999), no. 1, 112--122. 

\bibitem{M1}     R. Meshulam. The clique complex and hypergraph matching. \emph{Combinatorica} 
                 {\bf 21} (2001), no. 1, 89--94.

\bibitem{MS}     E. Miller, B. Sturmfels.  Combinatorial commutative algebra.  Graduate Texts in Mathematics, 227. Springer-Verlag, New York, 2005.

\bibitem{M_ny}  J.R.  Munkres. \emph{Elements of algebraic topology.} Addison-Wesley Publishing Company, Menlo Park, CA, 1984. ix+454 pp.

\bibitem{ST}     T. Szab\'o, G. Tardos. Extremal problems for transversals in graphs with bounded degree.
                 \emph{Combinatorica} {\bf 26} (2006), no. 3, 333--351.

\bibitem{TV}     A. Van Tuyl, R.H. Villareal. Shellable graphs and sequentially Cohen-Macaulay bipartite graphs
                 \emph{J. Combin. Theory, Ser. A}, to appear. [{\tt arXiv:math/0701296v2}]

\bibitem{V}      R.H. Villarreal. Cohen-Macaulay graphs. \emph{Manuscripta Math.} 66 (1990), no. 3, 277--293.

\bibitem{Z}      G.M. Ziegler. Lectures on polytopes. Graduate Texts in Mathematics, 152. Springer-Verlag, New York, 1995.

\bibitem{Z2}     G.M. Ziegler. Shellability of chessboard complexes. \emph{Israel J. Math.} {\bf 87} (1994), no. 1-3, 97--110. 

\end{thebibliography}
\end{document}